\newcommand{\undertilde}[1]{\ensuremath{\mathord{\vtop{\ialign{##\crcr
   $\hfil\displaystyle{#1}\hfil$\crcr\tikzstyle{largegraph}=[
  every node/.style={circle,draw,fill=black,inner sep=0pt,minimum width=6pt},
  every path/.append style={semithick}
]\noalign{\kern1.5pt\nointerlineskip}
   $\hfil\tilde{}\hfil$\crcr\noalign{\kern1.5pt}}}}}}
\tikzstyle{smallgraph}=[
\tikzstyle{largegraph}=[
\pgfmathsetmacro{\threevradius}{0.375*sqrt(2/3)}
\pgfmathsetmacro{\threevradius}{0.375*sqrt(2/3)}
\newcommand\flower[1]{%
\foreach \m in {1,..., #1} {\node (a\m) at (\m,1) {};};
\foreach \m in {1,..., #1} {\node (b\m) at (\m,0.5) {};};
\foreach \m in {1,..., #1} {\node (c\m) at (\m,1.5) {};};
\foreach \m in {1,..., #1} {\node (d\m) at (\m,2) {};};
\foreach \n in {1,...,#1} {\draw (a\n) -- (b\n);}
\foreach \n in {1,...,#1} {\draw (a\n) -- (c\n);}
\draw (b1) -- (b#1);
\draw (c1) -- (c#1);
\draw (d1) -- (d#1);
\foreach \n in {1,...,#1} {\draw[bend right=90] (a\n) to (d\n);}
\draw[dotted] (b1) to (0.5,0.5);
\draw[dotted] (c1) to (0.5,1.5);
\draw[dotted] (d1) to (0.5,2);
\draw[dotted] (b#1) to (#1+0.5,0.5);
\draw[dotted] (c#1) to (#1+0.5,1.5);
\draw[dotted] (d#1) to (#1+0.5,2);
\node[white] [label=right:{$a$}] at (0,1) {};
\node[white] [label=right:{$b$}] at (0,0.5) {};
\node[white] [label=right:{$c$}] at (0,1.5) {};
\node[white] [label=right:{$d$}] at (0,2) {};
}
\newcommand\flowerend[1]{%
\foreach \m in {1,..., #1} {\node (a\m) at (\m,1) {};};
\foreach \m in {1,..., #1} {\node (b\m) at (\m,0.5) {};};
\foreach \m in {1,..., #1} {\node (c\m) at (\m,1.5) {};};
\foreach \m in {1,..., #1} {\node (d\m) at (\m,2) {};};
\foreach \n in {1,...,#1} {\draw (a\n) -- (b\n);}
\foreach \n in {1,...,#1} {\draw (a\n) -- (c\n);}
\draw (b1) -- (b#1);
\draw (c1) -- (c#1);
\draw (d1) -- (d#1);
\foreach \n in {1,...,#1} {\draw[bend right=90] (a\n) to (d\n);}
\draw[dotted] (b1) to (0.5,0.5);
\draw[dotted] (c1) to (0.5,1.5);
\draw[dotted] (d1) to (0.5,2);
\draw[dotted] (b#1) to (#1+0.5,0.5);
\draw[dotted] (c#1) to (#1+0.5,2);
\draw[dotted] (d#1) to (#1+0.5,1.5);
\node[white] [label=right:{$a$}] at (0,1) {};
\node[white] [label=right:{$b$}] at (0,0.5) {};
\node[white] [label=right:{$c$}] at (0,1.5) {};
\node[white] [label=right:{$d$}] at (0,2) {};
}
\newcommand\inS[2]{
\tikzset{inS/.style={circle,draw,fill=black,inner sep=0pt,minimum width=8pt}}
\ifthenelse{\equal{#1}{a}}{\node[inS] at (#2,1) {};}{};
\ifthenelse{\equal{#1}{b}}{\node[inS] at (#2,0.5) {};}{};
\ifthenelse{\equal{#1}{c}}{\node[inS] at (#2,1.5) {};}{};
\ifthenelse{\equal{#1}{d}}{\node[inS] at (#2,2) {};}{};
}
\newcommand\inSS[2]{
\tikzset{inS/.style={rectangle,draw,fill=black,inner sep=0pt,minimum width=8pt,minimum height=8pt}}
\ifthenelse{\equal{#1}{a}}{\node[inS] at (#2,1) {};}{};
\ifthenelse{\equal{#1}{b}}{\node[inS] at (#2,0.5) {};}{};
\ifthenelse{\equal{#1}{c}}{\node[inS] at (#2,1.5) {};}{};
\ifthenelse{\equal{#1}{d}}{\node[inS] at (#2,2) {};}{};
}
\newcommand\inSa[2]{
\tikzset{inS/.style={circle,draw,fill=white,inner sep=0pt,minimum width=8pt}}
\ifthenelse{\equal{#1}{a}}{\node[inS] at (#2,1) {};}{};
\ifthenelse{\equal{#1}{b}}{\node[inS] at (#2,0.5) {};}{};
\ifthenelse{\equal{#1}{c}}{\node[inS] at (#2,1.5) {};}{};
\ifthenelse{\equal{#1}{d}}{\node[inS] at (#2,2) {};}{};
}
\newcommand\notS[2]{
\tikzset{inS/.style={cross out,draw,minimum size=8*(\pgflinewidth), inner sep=0pt, outer sep=0pt}}
\ifthenelse{\equal{#1}{a}}{\node[inS] at (#2,1) {};}{};
\ifthenelse{\equal{#1}{b}}{\node[inS] at (#2,0.5) {};}{};
\ifthenelse{\equal{#1}{c}}{\node[inS] at (#2,1.5) {};}{};
\ifthenelse{\equal{#1}{d}}{\node[inS] at (#2,2) {};}{};
}
\newcommand\notSa[2]{
\tikzset{inS/.style={strike out,draw,minimum size=8*(\pgflinewidth), inner sep=0pt, outer sep=0pt}}
\ifthenelse{\equal{#1}{a}}{\node[inS] at (#2,1) {};}{};
\ifthenelse{\equal{#1}{b}}{\node[inS] at (#2,0.5) {};}{};
\ifthenelse{\equal{#1}{c}}{\node[inS] at (#2,1.5) {};}{};
\ifthenelse{\equal{#1}{d}}{\node[inS] at (#2,2) {};}{};
}
\newcommand\patt[2]{
\tikzset{inS/.style={circle,draw,fill=white,inner sep=0pt,minimum width=0pt}}
\node[white] [label=above:{#2}] at (#1,-0.1) {};
}
\newcommand\lab[2]{
\node[white] [label=above:{#2}] at (#1,2.2) {};
}
\newcommand\genlab[1]{
\foreach \n in {1,...,#1} {\node[white] [label=above:{$J^\n$}] at (\n,2.2) {};};
}
\renewenvironment{proof}{\par {\sc {\bf Proof.}\hskip 5pt}}{\hfill \qed \par}
\begin{document}

\title{Variants of the Domination Number for Flower Snarks}
\author{Ryan Burdett, Michael Haythorpe, Alex Newcombe}

\institute{Ryan Burdett
\at Flinders University, 1284 South Road, Tonsley Park, SA, Australia\\
\email{ryan.burdett@flinders.edu.au} \and Michael Haythorpe (Corresponding author)
\at Flinders University, 1284 South Road, Tonsley Park, SA, Australia, Ph: +61 8 8201 2375, Fax: +61 8 8201 2904\\
\email{michael.haythorpe@flinders.edu.au} \and Alex Newcombe
\at Flinders University, 1284 South Road, Tonsley Park, SA, Australia\\
\email{alex.newcombe@flinders.edu.au}} \maketitle

\begin{abstract}We consider the flower snarks, a widely studied infinite family of 3--regular graphs. For the Flower snark $J_n$ on $4n$ vertices, it is trivial to show that the domination number of $J_n$ is equal to $n$. However, results are more difficult to determine for variants of domination. The Roman domination, weakly convex domination, and convex domination numbers have been determined for flower snarks in previous works. We add to this literature by determining the independent domination, 2-domination, total domination, connected domination, upper domination, secure domination and weak Roman domination numbers for flower snarks.

\emph{\bf Keywords:} Flower, Snarks, Domination, Variants, Secure
\end{abstract}

\subclass{05C69}

\section{Introduction}

Consider a graph $G$ containing the vertex set $V$ and edge set $E$. A subset of the vertices $S \subset V$ is said to be a {\em dominating set} if every vertex in $V \setminus S$ is adjacent to at least one vertex in $S$. Then, the {\em domination problem} is to determine the size of the smallest dominating set in a given graph $G$, which is known as the {\em domination number} of $G$ and is denoted by $\gamma(G)$.

There are obvious real-world applications for the domination problem. For example, suppose that the vertices of a graph correspond to locations in a secure site, and that each location needs to remain under observation by guards. If a guard at one location is able to simultaneously observe another, there is an edge between the corresponding vertices. Then, by placing guards at the sites corresponding to any dominating set, all locations are under observation. Clearly, it is desirable to do so with as few guards as possible.

However, in many real-world applications, the definition of domination may be unsuitable in some way, and so variants of domination have been described to handle such cases. Continuing the example in the previous paragraph, suppose that the guards carry out their observation from the top of large towers. The vantage point from these towers enables them to observe adjacent locations, but does not permit them to observe their own location. Then, their location would need to be observed by a guard at an adjacent location. In such a situation, we are seeking not a dominating set, but a {\em total dominating set}, which we define formally now along with several other variants of dominating sets that we will consider in this manuscript.

\begin{definition}Consider a dominating set $S \subseteq V$. Then:

\begin{itemize}\item $S$ is a {\em (weakly) convex dominating set} if $S$ is (weakly) convex in $G$.
\item $S$ is an {\em independent dominating set} if $S$ is an independent set in $G$.
\item $S$ is a {\em minimal dominating set} if any proper subset of $S$ is not a dominating set.
\item $S$ is a {\em 2-dominating set} if any vertex $v \not\in S$ is adjacent to at least two vertices in $S$.
\item $S$ is a {\em total dominating set} if every vertex in $V$ is adjacent to at least one vertex in $S$.
\item $S$ is a {\em connected dominating set} if the subgraph of G induced by the vertices in $S$ is connected.
\item $S$ is a {\em secure dominating set} if, for every vertex $v \in V \setminus S$, there exists a vertex $w \in S$ such that $vw \in E$, and $(S \setminus \{w\}) \cup \{v\}$ is a dominating set.
\end{itemize}\end{definition}

Analogously to the domination number, we define $\gamma_{wcon}(G)$ to be the weakly convex domination number, $\gamma_{con}(G)$ to be the convex domination number, $i(G)$ to be the independent domination number, $\gamma_2(G)$ to be the 2-domination number, $\gamma_t(G)$ to be the total domination number, $\gamma_c(G)$ to be the connected domination number, and $\gamma_s(G)$ to be the secure domination number. We also define the upper domination number, $\Gamma(G)$, as follows.

\begin{definition}The {\em upper domination number}, denoted by $\Gamma(G)$, is equal to the size of the largest minimal dominating set.\end{definition}

In addition, we consider two more variants of domination. Suppose that for our graph $G$, we have a function $f : V \rightarrow \{0, 1, 2\}$. Then, the {\em weight} of $f$, denoted $w(f)$, is equal to $\sum_{v \in V} f(v)$.

\begin{definition}$f$ is a {\em Roman dominating function} on $G$ if it satisfies the condition that every vertex $v$ for which $f(v) = 0$ is adjacent to at least one vertex $w$ for which $f(w) = 2$.\end{definition}

In the following definition, we say that a vertex $v$ is {\em undefended} with respect to $f$ if $f(v) = 0$ and for every vertex $w$ adjacent to $v$, $f(w) = 0$.


\begin{definition}$f$ is a {\em weak Roman dominating function} on $G$ if, for every vertex $v$ such that $f(v) = 0$, there is at least one vertex $w$, adjacent to $v$ and satisfying the following: if we define a new function $g : V \rightarrow \{0, 1, 2\}$ defined by $g(v) = 1$, $g(w) = f(w)-1$, and $g(u) = f(u)$ for all $u \neq \{v,w\}$, then there are no undefended vertices with respect to $g$.\label{def-wrd}\end{definition}

Then, the Roman domination number $\gamma_R(G)$ is the minimum weight over all Roman dominating functions, and equivalently for the weak Roman domination number $\gamma_r(G)$. It is worth noting that if we further demand that $f(v) \leq 1$ for all $v \in V$ then weak Roman domination is equivalent to secure domination. Hence, it is clear that $\gamma_r(G) \leq \gamma_s(G)$.

Domination numbers are known for some infinite families of graphs. Other than trivial results such as for paths or cycles, perhaps the most famous result is the sprawling effort over a 27 year period \cite{jacobson,hare,chang,spalding,alanko,goncalves} to provide a complete characterisation of domination numbers for grid graphs $G(n,m)$ of all possible sizes, consisting of 23 special cases before settling into a standard formula for $n,m \geq 16$. Other results for domination include generalized Petersen graphs \cite{yan,liu,xueliang}, Cartesian products involving cycles \cite{pavlic,crevals,agustin}, King graphs \cite{yaglom}, Latin square graphs \cite{pouyandeh}, hypercubes \cite{arumugam}, Sierpi\'{n}ski graphs \cite{shan}, Kn\"{o}del graphs \cite{xueliang2}, and various graphs from chemistry \cite{mojdeh,quadras}, among others.

However, far fewer results are known for variants of domination beyond trivial results such as for paths, cycles, stars, wheels, or complete graphs. We summarise the most noteworthy of these results for the variants of domination considered in this paper. For upper domination, results are known for various graphs based on chessboards \cite{burcroff,fricke,hedetniemi,wallis,yaglom}. For total domination, results are known for some grid graphs $G(n,m)$ (for $n \leq 6$) \cite{gravier,klobucar}, Kn\"{o}del graphs \cite{mojdeh2}, and various graphs from chemistry \cite{mojdeh}. For connected domination, results are known for trees \cite{sampathkumar}, circulant graphs \cite{shobana}, Centipede graphs \cite{vijayan}, and various graphs from chemistry \cite{mojdeh}. For weak Roman domination, results are known for various graphs based on chessboards \cite{pushpam2}, Cartesian products involving complete graphs \cite{valveny}, and Helm graphs and Web graphs \cite{lai}. For secure domination, results are known for some grid and torus graphs (for $n \leq 3$) \cite{securedom}, various Cartesian products of stars, cycles, paths and complete graphs \cite{valveny}, and middle graphs \cite{pushpam}.

In recent works, the following results were shown for flower snarks (which will be defined in the next section).

\begin{theorem}[Maksimovic et al. (2018) \cite{maksimovic}, Kratica et al. (2020)\cite{kratica}]Consider the flower snark $J_n$, for $n \geq 3$. Then we have $\gamma_R(J_n) = \gamma_{wcon}(J_n) = 2n$, and $\gamma_{con}(J_n) = 4n$.\end{theorem}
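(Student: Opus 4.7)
My plan is to establish each equality by pairing a short explicit construction (for the upper bound) with a structural argument (for the lower bound). Let $C_i = \{a_i, b_i, c_i, d_i\}$ denote the $i$th ``column'' of $J_n$. All three upper bounds are immediate: for $\gamma_R(J_n) \leq 2n$ set $f(a_i) = 2$ for every $i$ and $f = 0$ elsewhere, which is Roman since every $b_i, c_i, d_i$ has $a_i$ as a neighbour of weight $2$; for $\gamma_{wcon}(J_n) \leq 2n$ take $S = \{a_i, b_i : 1 \leq i \leq n\}$, which dominates since each $c_i, d_i$ is adjacent to $a_i \in S$ and in which every pair of vertices is joined by a geodesic of the form $a_i \to b_i \to \cdots \to b_j \to a_j$ lying entirely in $S$; and $\gamma_{con}(J_n) \leq 4n$ is trivial.

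For $\gamma_R(J_n) \geq 2n$ I would use a short counting argument. Given a Roman dominating function $f$, let $n_k = |\{v : f(v) = k\}|$ for $k \in \{0,1,2\}$. Each vertex of $V_0$ needs a neighbour in $V_2$; since $J_n$ is cubic this forces $n_0 \leq 3n_2$, and combined with $n_0 + n_1 + n_2 = 4n$ we get $w(f) = n_1 + 2n_2 \geq 4n - 2n_2$, so $w(f) \geq 2n$ whenever $n_2 \leq n$, while $w(f) \geq 2n_2 \geq 2n$ when $n_2 > n$. For $\gamma_{con}(J_n) \geq 4n$ I would show every proper dominating subset $S \subsetneq V$ fails convexity: pick $v \notin S$ and, by a case analysis on the type of $v$, exhibit two neighbours of $v$ that both lie in $S$ whose unique length-$2$ geodesic is forced through $v$. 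This uses the fact that any two neighbours of a common vertex in $J_n$ have no other length-$2$ path between them, which can be checked directly from the column/twist structure; the small cases $n = 3, 4$ where the $b$-cycle is a short cycle must be handled by separate ad hoc bookkeeping.

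The main obstacle is $\gamma_{wcon}(J_n) \geq 2n$. My plan is to show $s_i := |S \cap C_i| \geq 2$ for every $i$, from which $|S| = \sum_i s_i \geq 2n$ is immediate. Weak convexity forces $G[S]$ to be connected, so if $s_i = 1$ with $a_i \in S$ then $a_i$ is isolated in $G[S]$, a contradiction as soon as $|S| \geq 2$. If $s_i = 1$ with a single $b_i$ (say) then $c_i, d_i \not\in S$ must be dominated from outside column $C_i$, and moreover any geodesic from $b_i$ to a vertex of $S$ in a neighbouring column must lie in $S$; tracing the only candidate geodesics $b_i - b_{i\pm 1} - \cdots$ and $b_i - a_i - c_i - \cdots$ shows that the intermediate vertices they require cannot simultaneously sit inside $S$ when $s_i = 1$, yielding a contradiction. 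The fiddly ingredient is handling the cyclic wraparound together with the $c_n d_1$ and $d_n c_1$ twist, where the ``natural'' columnwise argument must be adjusted to account for the non-trivial gluing of the $c$- and $d$-paths, and this is where I expect the bulk of the technical effort to go.
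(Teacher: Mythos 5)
First, a framing remark: this theorem is quoted from Maksimovic et al.\ and Kratica et al.\ --- the present paper gives no proof of it --- so your proposal can only be judged on its own terms. On those terms, the Roman domination part is complete and correct: the counting argument $n_0\le 3n_2$ (each weight-$2$ vertex covers at most three weight-$0$ vertices in a cubic graph) together with $n_0+n_1+n_2=4n$ gives $w(f)\ge 4n-2n_2$, and the two-case split on $n_2\lessgtr n$ closes it. The three constructions for the upper bounds are also fine.

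The genuine gap is in the lower bound $\gamma_{con}(J_n)\ge 4n$. Your plan is to take any $v\notin S$ and ``exhibit two neighbours of $v$ that both lie in $S$'' whose unique length-$2$ geodesic passes through $v$. But domination only guarantees \emph{one} neighbour of $v$ in $S$; nothing you have stated rules out, say, $a^i\notin S$ being dominated solely by $b^i$ with $c^i,d^i\notin S$, in which case no such pair of neighbours exists and the convexity violation you want is simply not available. Producing the second $S$-neighbour is exactly where the content of the proof lies: one must first use convexity globally (e.g.\ the fact that \emph{all} geodesics between already-forced members of $S$ lie in $S$, propagated across several copies) to force additional vertices into $S$ before the local ``two neighbours at distance $2$'' trick can fire. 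A related, smaller issue affects the weakly convex bound: the claim $|S\cap C_i|\ge 2$ is, I believe, true, but the ``tracing the only candidate geodesics'' step conceals most of the work --- e.g.\ if $S\cap C_i=\{b^i\}$ one is forced into $a^{i\pm1},b^{i\pm1}\in S$ and then must chase the domination of $c^{i-1},d^{i-1}$ out to columns $i\pm2$ before a pair such as $(c^{i-2},c^{i+1})$ with all its geodesics through $c^{i-1},c^i\notin S$ yields the contradiction, with several subcases depending on which side dominates $c^i$ and $d^i$. So the architecture is reasonable, but as written the convex case does not go through and the weakly convex case is a sketch rather than a proof.
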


We now add to the above literature by proving the following additional results for flower snarks:

\begin{theorem}Consider the flower snark $J_n$, for $n \geq 3$. Then,

$$\gamma(J_n) = i(J_n) = n, \hspace*{0.75cm} \gamma_s(J_n) = \gamma_r(J_n) = \left\lceil\frac{3n+1}{2}\right\rceil, \hspace*{0.75cm} \gamma_2(J_n) = \left\{\begin{aligned}\left\lceil\frac{5n}{3}\right\rceil\;, & \mbox{ if } n \neq 1\mbox{ mod }3,\\\frac{5n+4}{3}, & \mbox{ if } n = 1\mbox{ mod }3,\end{aligned}\right.$$
$$\hspace*{0.75cm}\gamma_t(J_n) = \left\{\begin{aligned}\left\lceil\frac{3n}{2}\right\rceil\;, & \mbox{ if } n \neq 2\mbox{ mod }4,\\\frac{3n}{2} + 1, & \mbox{ if } n = 2\mbox{ mod }4,\end{aligned}\right. \hspace*{2.4cm} \gamma_c(J_n) = \Gamma(J_n) = \left\{\begin{aligned}2n\;\;\;, & \mbox{ if } n\mbox{ is even,}\\2n-1, & \mbox{ if } n\mbox{ is odd.}\end{aligned}\right.\hspace*{1.0cm}$$
\label{thm-overall}\end{theorem}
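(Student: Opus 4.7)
The plan is to decompose the theorem into seven separate bounds and prove each by matching a constructive upper bound against an extremal lower bound. The trivial cases $\gamma(J_n) = i(J_n) = n$ are disposed of at once: the set $A = \{a_0, a_1, \ldots, a_{n-1}\}$ is independent and dominating (each $a_i$ dominates $b_i, c_i, d_i$), while the $n$ closed neighbourhoods $N[a_i] = \{a_i, b_i, c_i, d_i\}$ are pairwise disjoint, forcing any dominating set to contain at least $n$ vertices.

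For the remaining five parameters, the upper bounds come from periodic column patterns. For each parameter I identify a small family of column configurations (subsets of $\{a_i, b_i, c_i, d_i\}$ placed in $S$) together with a rule for cycling between these patterns as $i$ runs from $0$ to $n - 1$. The natural period is small (2 or 4 for $\gamma_c$, $\Gamma$, $\gamma_t$ and $\gamma_s = \gamma_r$, and 3 for $\gamma_2$), and the case splits on $n \bmod 3$ and $n \bmod 4$ appearing in the theorem come precisely from how the pattern wraps around when $n$ is not a multiple of the period, where one or two extra vertices must be inserted to repair the boundary. Routine local verification that each pattern meets the defining condition of its variant completes each upper bound.

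The lower bounds rely on an amortisation argument over columns. Write $s_i = |S \cap \{a_i, b_i, c_i, d_i\}|$, so that $|S| = \sum_i s_i$. For each variant I establish a local inequality of the form $s_{i-1} + s_i + s_{i+1} \geq C$ (or a similar inequality on a slightly wider window) obtained by restricting the defining condition to column $i$ and checking all possible intersections of $S$ with that column and its two neighbours; summing cyclically and dividing by the window length yields the desired linear bound, and a refined analysis of when the inequality is strict delivers the $+1$ corrections in the residue classes. For $\Gamma$ the lower bound is slightly different: I would use that every vertex of a minimal dominating set has a private neighbour, and track the possible private-neighbour assignments column by column to conclude $|S| \leq 2n$, with a loss of one when $n$ is odd.

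The main obstacle will be the secure and weak Roman lower bounds, because those conditions are inherently global: swapping $w \in S$ for $v \notin S$ in the secure case must leave the whole graph dominated, so a single column cannot be analysed in isolation. To localise this I would classify each column by the pair (value of $s_i$, which subset of $\{a_i, b_i, c_i, d_i\}$ is in $S$), prove that certain ``light'' column types (in particular $s_i = 1$ with the single vertex at $a_i$) cannot appear in two consecutive columns because defending an undominated neighbour in the first would un-dominate a vertex of the second, and use a discharging argument to show that the average column weight is at least $3/2$ with a strict excess of at least $1/2$ somewhere in the cycle, yielding the ceiling $\lceil (3n+1)/2 \rceil$. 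The equality $\gamma_s = \gamma_r$ then follows from the inequality $\gamma_r \leq \gamma_s$ already noted in the introduction, together with an adaptation of the same lower bound to weak Roman functions (where the weight-$2$ guards play the role of the movable dominators); since the local analysis above invokes only the swap condition, it should transfer almost verbatim.
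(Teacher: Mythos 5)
Your plan for $\gamma$, $i$, the constructive upper bounds, and the window-counting lower bounds for $\gamma_2$, $\gamma_t$ and $\Gamma$ is essentially the paper's approach (the paper phrases the window inequalities as forbidden weight patterns such as 111 and 1121, and for $\Gamma$ uses exactly your private-neighbour idea, showing each weight-3 copy must be flanked by weight-1 copies). But two of the lower bounds cannot be obtained by the local amortisation you describe, and these are precisely where the paper has to work hardest.

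The first gap is connected domination. No window inequality of bounded length yields $|S|\ge 2n-1$: the extremal connected dominating set (repeating the column pattern of weights $1,3,1,3,\dots$) contains three consecutive copies of total weight $1+3+1=5$, so a three-column inequality can give at best $5n/3$, and a two-column inequality $s_i+s_{i+1}\ge 4$ is both unproved in your sketch and in any case blind to the parity correction $2n$ versus $2n-1$. Connectivity is a global constraint, and the paper's lower bound is an induction on $n$: it ``smooths out'' a copy with $a^i\notin S$ or of weight 2 to produce a connected dominating set of $J_{n-1}$ with two fewer vertices, tracks the parity of the number of weight-4 copies, and eliminates a list of residual patterns (111, 1312131, 112, 113, 3123) by hand. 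Your proposal contains no substitute for this global reduction. The second gap is the secure/weak Roman bound for even $n$. Your light-column analysis plausibly establishes that every four consecutive copies carry weight at least 6, giving $w(f)\ge\lceil 3n/2\rceil$, which already equals $\lceil(3n+1)/2\rceil$ when $n$ is odd; but for even $n$ you must exclude weight exactly $3n/2$, and the claimed ``strict excess of at least $1/2$ somewhere in the cycle'' from discharging is exactly what fails: the periodic configuration with column weights $2,1,1,2,2,1,1,2,\dots$ has every four-column window at weight exactly 6 and exhibits no local defect. The paper disposes of it globally, by showing the guard placement in one period of four copies is forced to replicate in the next, then deleting four copies to obtain a weak Roman dominating function of $J_{n-4}$ of weight $3n/2-6$, contradicting an induction anchored at computed base cases $n=3,\dots,8$. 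Without such a global reduction (and the base cases it needs), your argument stops at $\lceil 3n/2\rceil$.
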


In most cases, the proofs will be by induction, and hence it will be necessary to first prove the results for some number of base cases. Rather than provide these proofs here, we will simply use mixed-integer linear programming formulations of each variant of domination from literature to handle the base cases.

\section{Flower Snarks}

The {\em chromatic index} of a graph is the minimum required number of colours to color the edges of the graph, such that no two incident edges have the same colour. By Vizing's theorem, it is known that all 3--regular graphs have chromatic index 3 or 4. The latter case is rare, and simple, connected, bridgeless 3--regular graphs with chromatic index 4 are called {\em snarks}. It is common to further add the restriction that the girth should be at least 5, with such graphs known as {\em nontrivial snarks}. Flower snarks \cite{isaacs}, discovered by Isaacs in 1975, were the first known infinite family of nontrivial snarks, and are denoted by $J_n$. They are defined as follows.

\begin{definition}[Flower snarks]For $n \geq 3$, take the union of $n$ copies of $K_{1,3}$. Denote the degree 3 vertex in the $i$-th copy as $a^i$, and the other three vertices in the $i$-th copy as $b^i$, $c^i$ and $d^i$. Then construct an $n$-cycle through vertices $b^1, b^2, \hdots, b^n$, and a $2n$-cycle through vertices $c^1, c^2, \hdots, c^n, d^1, d^2, \hdots, d^n$.\end{definition}

In order to have the properties of a nontrivial snark, $n$ must be odd and $n \geq 5$. However, for other values of $n \geq 3$ a 3--regular graph is nonetheless obtained by this construction. In this paper we will consider all $n \geq 3$, and for convenience we will refer to all of them as flower snarks.

For the remainder of this document, we will consider various kinds of dominating sets of flower snarks. As such, it is convenient to go over some brief terms and notation here. Recall that $J_n$ contains $n$ copies of $K_{1,3}$. We will refer to the $i$-th copy as $J^i$, and its four vertices as $a^i$, $b^i$, $c^i$ and $d^i$. Note that this notation does not include $n$, as typically $n$ will be fixed in our considerations. Also, we will say that a copy $J^i$ has {\em weight} $k$ in a dominating set $S$, if $S$ contains $k$ vertices from $J^i$. We will use the term {\em pattern} to describe a sequence of weights of consecutive copies of $J_n$ in $S$. For example, if we say that $S$ contains the pattern 121, it means there is a set of three consecutive copies $J^{i-1}, J^i, J^{i+1}$ which have weights 1, 2 and 1 respectively in $S$. Further to this, in a set of consecutive copies of $J_n$, we will use the term {\em configuration} to describe the specific allocation of its vertices to $S$. Finally, we will define $w^S_i$ to be equal to the number of copies with weight $i$ in $S$.

Flower snarks can be visualised in various ways. A common method is to distribute the copies of $K_{1,3}$ in a circle, using curved edges for the $2n$-cycle and straight edges elsewhere. We display one such drawing of $J_n$ in part (a) of Figure \ref{fig-flower}, for $n = 7$. However, for our purposes it will be convenient to focus only on a small section of a flower snark at a time, and so we will use the drawing style displayed in part (b) of Figure \ref{fig-flower}, with each copy displayed vertically. As indicated in Figure \ref{fig-flower} we will assume that in these drawings, the bottom vertex in copy $J^i$ is $b^i$, followed by $a^i$, $c^i$ and $d^i$. When useful to avoid confusion, a label will be given above each copy.

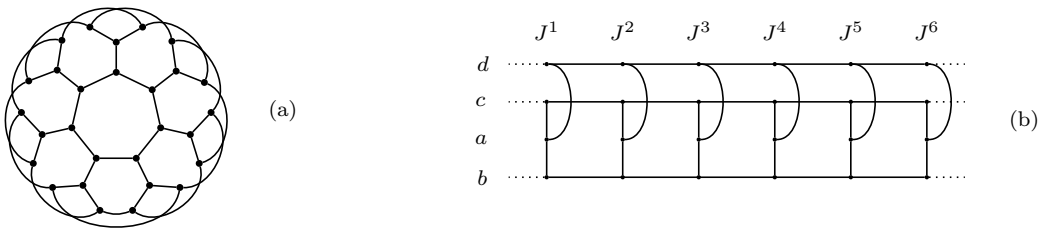
\begin{figure}[h!]\begin{center}\begin{tikzpicture}[largegraph,rotate=206,scale=0.5]
  \pgfmathtruncatemacro{\ang}{90}
  \begin{scope}[rotate=\ang]
    \node (0) at (2,0) {};
    \path (0) -- ++(180:.8) node (b0) {};
    \path (0) -- ++(60:.8) node (c0) {};
    \path (0) -- ++(300:.8) node (d0) {};
    \draw (0) -- (b0);
    \draw (0) -- (c0);
    \draw (0) -- (d0);
  \end{scope}

\foreach \n in {1,...,6}{
  \pgfmathtruncatemacro{\ang}{90+\n*360/7}
  \begin{scope}[rotate=\ang]
    \node (\n) at (2,0) {};
    \path (\n) -- ++(180:.8) node (b\n) {};
    \path (\n) -- ++(60:.8) node (c\n) {};
    \path (\n) -- ++(300:.8) node (d\n) {};
    \draw (\n) -- (b\n);
    \draw (\n) -- (c\n);
    \draw (\n) -- (d\n);
  \end{scope}
}
\foreach \n in {0,...,6}{
  \pgfmathtruncatemacro{\nextn}{mod(\n+1,7)}
  \draw (b\n) -- (b\nextn);
}
\foreach \n in {0,...,5}{
  \pgfmathtruncatemacro{\nextn}{mod(\n+1,7)}
  \draw[bend right=70] (c\n) to (c\nextn);
}
\draw[bend right=20] (c6) to (d0);
\foreach \n in {0,...,5}{
  \pgfmathtruncatemacro{\nextn}{mod(\n+1,7)}
  \draw[bend right=70] (d\n) to (d\nextn);
}
\draw[bend right=80] (d6) to (c0);

\node[white] [label=right:{(a)}] at (-3.5,1.5) {};
\end{tikzpicture} \hspace*{2cm} \begin{tikzpicture}[smallgraph]\flower{6} \genlab{6} \node[white] [label=right:{}] at (0.5,-0.3) {};\node[white] [label=right:{(b)}] at (7,1.25) {};\end{tikzpicture}
\caption{In part (a) a common drawing of the flower snark $J_7$. In part (b), a section of a larger flower snark consisting of six copies. \label{fig-flower}} \end{center}\end{figure}

It is worth noting that, when viewing only a section of $J_n$, vertices $b^i$, $c^i$ and $d^i$ are all essentially equivalent, and this will be useful in simplifying many of the upcoming proofs. In a global sense this is not the case, as there is a ``twist" in the final copy in which $c^n$ links to $d^1$, and $d^n$ links to $c^1$. However, due to the symmetry of the flower snark, when viewing any copy locally we can choose to relabel the vertices so that this twist occurs elsewhere in the graph. Hence, in the arguments that follow, whenever we are viewing only a portion of the graph we will always assume that the twist occurs elsewhere in the graph.

Various proofs in this paper will go as follows. We will begin with a set of copies of $J_n$ for which the pattern is known, as well as possibly knowing in advance that some vertices are either in $S$, or not in $S$. From there, depending on the variant of domination being considered, and the structure of $J_n$, we will go on to prove that certain vertices either must be in $S$, or must not be in $S$. For these proofs, figures will be provided, using the following convention. The pattern known in advance will be indicated by listing the corresponding weight underneath each copy. Vertices which are known in advance to be in $S$ will be marked with \begin{tikzpicture}[smallgraph,scale=0.8]\inS{b}{1}\end{tikzpicture}, and vertices which are known in advance not to be in $S$ will be marked with \begin{tikzpicture}[smallgraph]\notS{b}{1}\end{tikzpicture}. Then, vertices which are subsequently shown (up to equivalence) to be in $S$ will be marked with \begin{tikzpicture}[smallgraph,scale=0.8]\inSa{b}{1}\end{tikzpicture}, while vertices which are subsequently shown not to be in $S$ will be marked with \begin{tikzpicture}[smallgraph]\notSa{b}{1}\end{tikzpicture}. We demonstrate this convention with a simple example.

\begin{example}Suppose we have four copies $J^1, J^2, J^3, J^4$ which meet the pattern 1111, and that we know that $d^2 \in S$ and $c^1 \not\in S$. This situation is displayed in Figure \ref{fig-example}. Clearly, since $d^2 \in S$ and $J^2$ has weight 1, we know that $a^2 \not\in S$, $b^2 \not\in S$, and $c^2 \not\in S$. These three are marked with \begin{tikzpicture}[smallgraph]\notS{b}{1}\end{tikzpicture} in Figure \ref{fig-example} as no argument was needed to establish they are not in $S$. Then, the only remaining vertex which can dominate $c^2$ is $c^3$, and hence $c^3 \in S$. Since this was argued in the proof, $c^3$ is marked with a \begin{tikzpicture}[smallgraph,scale=0.8]\inSa{b}{1}\end{tikzpicture} in Figure \ref{fig-example}. Also, since $J^3$ has weight 1 the vertices $a^3$, $b^3$, and $d^3$ cannot be in $S$, and so they are marked with \begin{tikzpicture}[smallgraph]\notSa{b}{1}\end{tikzpicture}. Then, the only remaining vertex which can dominate $b^2$ is $b^1$, which we similarly mark in Figure \ref{fig-example}. Finally, the only remaining vertex which can dominate $b^3$ is $b^4$, which we again mark in Figure \ref{fig-example}. Hence, we now know exactly which vertices in $J^1, J^2, J^3, J^4$ are contained in $S$.\label{example}\end{example}

\begin{figure}[h!]\begin{center}\begin{tikzpicture}[smallgraph]\flower{4} \genlab{4} \patt{1}{1} \patt{2}{1} \patt{3}{1} \patt{4}{1} \inS{d}{2} \notS{c}{1} \notS{a}{2} \notS{b}{2} \notS{c}{2} \inSa{b}{1} \inSa{c}{3} \inSa{b}{4} \notSa{a}{1} \notSa{d}{1} \notSa{a}{3} \notSa{b}{3} \notSa{d}{3} \notSa{a}{4} \notSa{c}{4} \notSa{d}{4}\end{tikzpicture}
\caption{The situation described in Example \ref{example}\label{fig-example}.}\end{center}\end{figure}
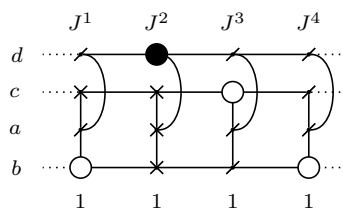

To conclude this section, we note that it is trivial to determine the domination and independent domination numbers for flower snarks.

\begin{lemma}For $n \geq 3$, we have $\gamma(J_n) = i(J_n) = n$.\end{lemma}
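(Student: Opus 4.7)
The plan is to sandwich both parameters between $n$ and $n$. Since every independent dominating set is a dominating set, we have $\gamma(J_n) \le i(J_n)$, so it suffices to exhibit an independent dominating set of size $n$ and a matching lower bound on $\gamma(J_n)$.

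For the upper bound I would take $S = \{a^1, a^2, \ldots, a^n\}$. Each $a^i$ is adjacent precisely to $b^i$, $c^i$, $d^i$, so $S$ dominates every vertex of $J_n$, and no two $a^i$'s are adjacent, so $S$ is independent. This gives $i(J_n) \le n$.

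For the lower bound I would use the standard closed-neighborhood counting argument: $J_n$ is $3$-regular on $4n$ vertices, so for any dominating set $S$ the closed neighborhoods cover all vertices, giving $|S|(1+3) \ge 4n$, hence $|S| \ge n$. Combining yields $n \le \gamma(J_n) \le i(J_n) \le n$, so both equal $n$.

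There is no real obstacle here; the only thing to be mildly careful about is checking the adjacency structure, namely that $a^i a^j \notin E$ for all $i \neq j$, which is immediate from the construction (the only neighbors of $a^i$ lie inside the copy $J^i$).
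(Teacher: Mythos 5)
Your proof is correct. The upper bound is identical to the paper's: the set $\{a^1,\hdots,a^n\}$ is an independent dominating set. The lower bound, however, is obtained differently. You invoke the generic bound $\gamma(G)\ge |V|/(1+\Delta)$ for a $3$-regular graph on $4n$ vertices, i.e.\ $4|S|\ge 4n$, which is valid and gives $\gamma(J_n)\ge n$ in one line. The paper instead observes that $a^i$ has all of its neighbours inside the copy $J^i$, so every dominating set must contain at least one vertex from each copy. Numerically the two arguments are equivalent here, but the paper's localized version is strictly more informative: it yields the per-copy statement recorded as Corollary \ref{cor-weight1} (every copy has weight at least $1$ in any dominating set), which is the workhorse used repeatedly in the later sections on $2$-domination, total domination, connected domination, and weak Roman domination. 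Your counting bound certifies only the total size $|S|\ge n$ and would not, by itself, rule out a dominating set that skips a copy entirely while compensating elsewhere, so if you intended to reuse this lemma downstream you would still need the paper's per-copy observation separately.
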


\begin{proof}The graph $J_n$ contains $n$ copies of $K_{1,3}$. For each copy $J^i$, the vertex $a^i$ is adjacent only to other vertices in $J^i$, and so any dominating set must contain at least one vertex from each copy. Hence, $n \leq \gamma(J_n) \leq i(J_n)$. Then, it suffices to note that the set $\{a^1, a^2, \hdots, a^n\}$ is an independent dominating set, and hence $i(J_n) \leq n$, leading to the result.\end{proof}

\begin{corollary}For any variant of dominating set considered in this paper, each copy must have weight at least 1.\label{cor-weight1}\end{corollary}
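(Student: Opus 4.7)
The plan is to observe that every variant under consideration (independent, 2-, total, connected, secure, and the minimal dominating sets that feature in upper domination) is, in particular, a dominating set in the ordinary sense. Hence the corollary reduces to re-applying the key observation from the proof of the preceding lemma.

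Specifically, in each copy $J^i$ the vertex $a^i$ has neighbourhood $\{b^i,c^i,d^i\}$, all of which lie inside $J^i$. So for any dominating set $S$, either $a^i \in S$, or $a^i$ must be dominated from within $J^i$, forcing at least one of $b^i, c^i, d^i$ into $S$. Either way, $|S \cap V(J^i)| \geq 1$, i.e.\ the weight of $J^i$ in $S$ is at least $1$. I would simply cite this observation once and then note that every variant listed in the paper imposes the dominating-set condition as a baseline, so the bound propagates automatically.

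The only (very mild) subtlety is pointing out the inclusions: independent, 2-, total, connected and secure dominating sets are dominating sets by definition; the sets counted by $\Gamma(J_n)$ are minimal \emph{dominating} sets, so they too are dominating sets. I would state this briefly rather than laboriously unpack each definition. For the Roman and weak Roman variants the statement as written does not directly apply since those are functions rather than sets; I would mention that the corollary is intended for the set variants, and that analogous lower bounds for the function variants will be handled separately when those cases are addressed. There is no genuine obstacle here — the corollary is a one-line consequence of the lemma together with the uniform structural fact about $a^i$.
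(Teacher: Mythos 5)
Your proposal is correct and matches the paper's (implicit) argument exactly: the corollary is stated without further proof because it follows from the observation in the preceding lemma that $a^i$ is adjacent only to vertices of $J^i$, so every dominating set meets every copy. Your side remark about the Roman/weak Roman function variants is a reasonable caution, though the same argument does cover them, since the support of any (weak) Roman dominating function is itself a dominating set, which is how the paper later invokes this corollary in the weak Roman section.
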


\section{Upper Domination}

In this section, we will determine the upper domination number for flower snarks. We begin by considering what weights are possible for copies in a minimal dominating set.

\begin{lemma}Consider the graph $J_n$ for $n \geq 3$. If $S$ is a minimal dominating set for $J_n$ then the weight of each copy is either 1, 2 or 3.\label{lem-upper-weight}\end{lemma}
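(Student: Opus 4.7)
The plan is to split the statement into the two inequalities and leverage Corollary \ref{cor-weight1} for the lower bound. Since each copy contains four vertices, we only need to rule out the possibility that some copy $J^i$ contributes all four of its vertices to $S$, and then the result will follow together with Corollary \ref{cor-weight1}.

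So first I would note that Corollary \ref{cor-weight1} gives weight $\geq 1$ immediately, independently of minimality. For the upper bound, I would argue by contradiction: suppose there exists a copy $J^i$ of weight 4, i.e.\ $\{a^i,b^i,c^i,d^i\} \subseteq S$. The key structural observation is that $a^i$ is the degree-3 vertex in the $K_{1,3}$ and its only neighbours in $J_n$ are $b^i, c^i, d^i$, all of which lie in $S$ by assumption. I would then examine whether $S' := S \setminus \{a^i\}$ is still a dominating set. Any vertex that was previously dominated by $a^i$ from outside $S$ must be a neighbour of $a^i$ not in $S$, but $a^i$ has no such neighbours; so no vertex outside $S'$ loses its domination. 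The only new concern is $a^i$ itself, which now sits outside $S'$, but $b^i \in S'$ is adjacent to $a^i$, so $a^i$ remains dominated.

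Thus $S'$ is a dominating set strictly smaller than $S$, contradicting the assumption that $S$ is minimal. Hence no copy has weight 4, and combined with Corollary \ref{cor-weight1} each copy has weight 1, 2, or 3.

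I do not expect a real obstacle here: the argument is essentially one observation about the vertex $a^i$ (its neighbourhood lies entirely inside its own copy), and the whole proof should fit in a few lines. The only thing to be careful about is to phrase the minimality check correctly, namely that removing $a^i$ does not create any vertex outside $S'$ without a neighbour in $S'$, rather than checking adjacencies within $S$ itself.
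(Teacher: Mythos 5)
Your proof is correct and follows exactly the paper's argument: Corollary \ref{cor-weight1} gives the lower bound, and for the upper bound one removes $a^i$ from a weight-4 copy and checks that the result is still dominating, contradicting minimality. You have simply spelled out the "easily checked" step in full, which is fine.
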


\begin{proof}We know from Corollary \ref{cor-weight1} that each copy must have positive weight. Then suppose that a copy $J^i$ has weight 4. It can be easily checked that $S \setminus \{a^i\}$ is also a dominating set, contradicting the assumption that $S$ is minimal.\end{proof}

In the following Lemma, we use the term {\em $i$ depends on $j$} to imply that $S \cap N[i] = \{j\}$. Note that since $S$ is minimal, for every vertex in $S$ there must be at least one other vertex which depends on it.

\begin{lemma}Consider the graph $J_n$ for $n \geq 4$, and a minimal dominating set $S$. If a copy $J^i$ has weight 3 in $S$, then both adjacent copies $J^{i-1}$ and $J^{i+1}$ have weight 1 in $S$.\label{lem-upper-weight3}\end{lemma}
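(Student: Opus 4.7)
The plan is to split on whether $a^i \in S$, and in each case assume for contradiction that $|S \cap J^{i+1}| \geq 2$, deducing that $J^{i-1}$ must have weight 0 and contradicting Corollary \ref{cor-weight1}. The symmetric statement for $J^{i-1}$ follows by running the same argument with the orientation reversed. Throughout, the workhorse is minimality: for every $u \in S$ some $v \in N[u]$ must depend on $u$ (in the sense defined just before the lemma), else $S \setminus \{u\}$ would still be dominating.

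\textbf{Case 1:} $a^i \notin S$, so $\{b^i, c^i, d^i\} \subseteq S$. First, $a^{i+1} \notin S$: otherwise, since each of $b^{i+1}, c^{i+1}, d^{i+1}$ already has a neighbor in $\{b^i, c^i, d^i\}$, the only vertex that can depend on $a^{i+1}$ is $a^{i+1}$ itself, which forces $J^{i+1}$ to have weight exactly 1. So $S \cap J^{i+1} \subseteq \{b^{i+1}, c^{i+1}, d^{i+1}\}$, and the assumption is that at least two of these are in $S$. For any $x \in \{b,c,d\}$ with $x^{i+1} \in S$, the vertex depending on $x^i$ cannot be $x^i$ itself (since $x^{i+1}$ would be another $S$-neighbor) nor $x^{i+1}$ (which is in $S$), so it must be $x^{i-1}$, forcing $a^{i-1}, x^{i-1}, x^{i-2} \notin S$. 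When all three of $b^{i+1}, c^{i+1}, d^{i+1}$ lie in $S$ this directly empties $J^{i-1}$. Otherwise, by the local $b,c,d$-symmetry we may assume $\{b^{i+1}, c^{i+1}\} \subseteq S$ and $d^{i+1} \notin S$; the reasoning above forces $a^{i-1}, b^{i-1}, c^{i-1} \notin S$, and applying the same analysis to the dependents of $b^{i+1}$ and $c^{i+1}$ forces $a^{i+2}, b^{i+2}, c^{i+2} \notin S$. Corollary \ref{cor-weight1} then demands $d^{i+2} \in S$, which prevents $d^{i+1}$ from depending on $d^i$; the remaining options for $d^i$'s dependent, namely itself or $d^{i-1}$, both require $d^{i-1} \notin S$, emptying $J^{i-1}$.

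\textbf{Case 2:} $a^i \in S$. By the local $b,c,d$-symmetry we may take $\{a^i, b^i, c^i\} \subseteq S$ and $d^i \notin S$. The only vertex that can depend on $a^i$ is $d^i$, which forces $d^{i-1}, d^{i+1} \notin S$. The two or more elements of $S \cap J^{i+1}$ must therefore come from $\{a^{i+1}, b^{i+1}, c^{i+1}\}$. Whenever $b^{i+1} \in S$ (respectively $c^{i+1} \in S$), the dependent of $b^i$ (resp.\ $c^i$) is pushed to $b^{i-1}$ (resp.\ $c^{i-1}$), so $a^{i-1}, b^{i-1} \notin S$ (resp.\ $a^{i-1}, c^{i-1} \notin S$). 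In the sub-cases where $a^{i+1}$ is one of the chosen vertices, the only vertex that can depend on $a^{i+1}$ is $d^{i+1}$ (since $b^{i+1}$ or $c^{i+1}$ already has $b^i$ or $c^i$ as an $S$-neighbor, and self-dependence would force weight 1), and an analogous dependent-chain for the other $S$-vertex of $J^{i+1}$ pins down the remaining coordinate of $J^{i-1}$. In every sub-case $a^{i-1}, b^{i-1}, c^{i-1}, d^{i-1} \notin S$, again giving weight 0 in $J^{i-1}$.

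I expect the main obstacle to be Case 1 with weight 2 in $J^{i+1}$: a naive analysis leaves open the possibility that the dependent of $d^i$ is $d^{i+1}$, which would impose no restriction on $J^{i-1}$. Closing this gap requires propagating minimality through $J^{i+1}$ into $J^{i+2}$ and then invoking Corollary \ref{cor-weight1} to force $d^{i+2} \in S$ and kill that escape route. The remaining work is disciplined case enumeration using the local $b,c,d$-symmetry noted in the preamble.
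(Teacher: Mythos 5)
Your overall strategy is the same as the paper's: split on whether $a^i \in S$ and exploit minimality through the ``depends on'' relation to squeeze the adjacent copies down to weight 1 (you drive everything to a weight-0 contradiction in $J^{i-1}$ via Corollary \ref{cor-weight1}, whereas the paper mixes weight-0 and non-minimality contradictions, but the machinery is identical). Case 1 checks out in full, including the wrinkle you flag about $d^i$'s dependent possibly being $d^{i+1}$. In Case 2, however, the justification for the sub-cases with $a^{i+1} \in S$ does not do what you claim: the ``analogous dependent-chain for the other $S$-vertex of $J^{i+1}$'' --- say $b^{i+1}$ --- propagates \emph{forward} (each of $b^{i+1}$, $a^{i+1}$ and $b^i$ has at least two $S$-neighbours, so $b^{i+1}$'s dependent must be $b^{i+2}$), which constrains $J^{i+2}$ and says nothing about the remaining coordinate $c^{i-1}$ of $J^{i-1}$. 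The fact you actually need comes from the dependent of $c^i$, a vertex of $J^i$ rather than of $J^{i+1}$: because $a^{i+1} \in S$, the vertex $c^{i+1}$ has both $c^i$ and $a^{i+1}$ as $S$-neighbours and so cannot depend on $c^i$, and neither can $c^i$ itself (it is adjacent to $a^i \in S$) nor $a^i$; hence $c^{i-1}$ must depend on $c^i$, forcing $c^{i-1} \notin S$ and completing the weight-0 contradiction in $J^{i-1}$. This is a one-sentence repair entirely within your own framework, so the proof stands once that step is redirected at the correct vertex.
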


\begin{proof}Suppose it is not the case, that is, $J^i$ has weight 3 and at least one of $J^{i-1}$ and $J^{i+1}$ has weight greater than 1. Since $b^i$, $c^i$ and $d^i$ are all equivalent in this framing, there are only two cases to consider; if $a^i \in S$ and if $a^i \not\in S$.

First, consider the case when $a^i \in S$. Then there are two other vertices from $J^i$ also in $S$. Without loss of generality, suppose $b^i \in S$ and $c^i \in S$. This situation is shown in Figure \ref{fig-upper-131} part(a). Then, since $S$ is minimal, the removal of $a^i$ does not result in a dominating set. This is only possible if $d^i$ depends on $a^i$. Hence, $d^{i-1} \not\in S$ and $d^{i+1} \not\in S$.

Then, consider $b^i$. Since $S$ is minimal, at least one of $b^{i-1}$ and $b^{i+1}$ must depend on $b^i$. Without loss of generality, suppose it is the former. This implies that both $b^{i-1} \not\in S$ and $a^{i-1} \not\in S$. Then, $c^{i-1} \in S$, or else copy $J_{i-1}$ has weight 0 which contradicts Lemma \ref{lem-upper-weight}. Then, since $S$ is minimal, $c^{i+1}$ must depend on $c^i$, which implies that $c^{i+1} \not\in S$ and $a^{i+1} \not\in S$. Hence, from Lemma \ref{lem-upper-weight} copy $J^{i+1}$ also has weight 1, contradicting the initial assumption.

Second, consider the case when $a^i \not\in S$. Then, $S$ contains $b^i$, $c^i$, and $d^i$. Since $S$ is minimal and $a^i$ does not depend on $b^i$, at least one of $b^{i-1}$ or $b^{i+1}$ must depend on $b^i$. Without loss of generality, suppose it is the former. This implies that both $b^{i-1} \not\in S$ and $a^{i-1} \not\in S$. We can make analogous arguments for $c^i$ and $d^i$. Clearly, the dependent vertices can not all be from the same copy, or else that copy has weight 0. Hence, two of the dependent vertices are in one copy and one is in the other; without loss of generality we will assume that $b^{i-1}$ is dependent on $b^i$, $c^{i-1}$ is dependent on $c^i$, and $d^{i+1}$ is dependent on $d^i$. Hence, $J^{i-1}$ has weight 1, and so by assumption, $J^{i+1}$ has weight 2, and $b^{i+1} \in S$ and $c^{i+1} \in S$. But then, by similar arguments, it must be the case that $b^{i+2}$ depends on $b^{i+1}$, and $c^{i+2}$ depends on $c^{i+1}$, implying that $J^{i+2}$ has weight 1, and $d^{i+2} \in S$. This situation is displayed in Figure \ref{fig-upper-131} part (b). Finally, it can be seen from this figure that $S \setminus \{d^i\}$ is a dominating set, contradicting the initial assumption that $S$ is minimal.\end{proof}

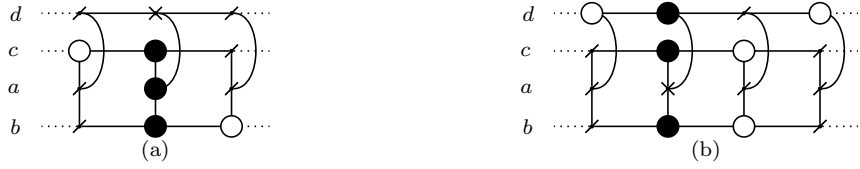
\begin{figure}[h!]\begin{center}\begin{tikzpicture}[smallgraph]\flower{3} \patt{2}{(a)} \inS{a}{2} \inS{b}{2} \inS{c}{2} \inSa{c}{1} \inSa{b}{3} \notSa{a}{1} \notSa{b}{1} \notSa{d}{1} \notS{d}{2} \notSa{a}{3} \notSa{c}{3} \notSa{d}{3}\end{tikzpicture} \hspace*{3cm} \begin{tikzpicture}[smallgraph]\flower{4} \patt{2.5}{(b)} \inS{b}{2} \inS{c}{2} \inS{d}{2} \inSa{d}{1} \inSa{b}{3} \inSa{c}{3} \inSa{d}{4} \notS{a}{2} \notSa{a}{1} \notSa{b}{1} \notSa{c}{1} \notSa{a}{3} \notSa{d}{3} \notSa{a}{4} \notSa{b}{4} \notSa{c}{4}\end{tikzpicture}
\caption{The two situations described in the proof of Lemma \ref{lem-upper-weight3}.\label{fig-upper-131}}\end{center}\end{figure}

Recalling that $w^S_i$ is the number of copies to have weight $i$ in $S$, Lemma \ref{lem-upper-weight3} implies the following.

\begin{corollary}Consider the graph $J_n$ for $n \geq 4$. If $S$ is a minimal dominating set for $J_n$ then $w^S_1 \geq w^S_3$, with equality occurring if and only if $w^S_1 = w^S_3 = \frac{n}{2}$. \label{cor-upper-w13}\end{corollary}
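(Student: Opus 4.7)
The plan is a simple double-counting argument driven by Lemma~\ref{lem-upper-weight3}. By Corollary~\ref{cor-weight1} and Lemma~\ref{lem-upper-weight}, every copy has weight in $\{1,2,3\}$, and by Lemma~\ref{lem-upper-weight3} each weight-$3$ copy is cyclically flanked by two weight-$1$ copies (indices mod $n$, which is valid since $n\geq 4$).

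First I would count the set $P$ of ordered pairs $(J^i, J^j)$ with $J^i$ of weight~$3$, $J^j$ of weight~$1$, and $j \in \{i-1, i+1\} \pmod{n}$. Lemma~\ref{lem-upper-weight3} guarantees that each weight-$3$ copy contributes exactly two such pairs, giving $|P| = 2w^S_3$. On the other hand, a weight-$1$ copy has only two cyclic neighbours, so it can appear as the second coordinate of at most two pairs, giving $|P| \leq 2w^S_1$. Combining these two estimates yields $w^S_1 \geq w^S_3$.

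For the equality case, I would argue that $w^S_1 = w^S_3$ forces the bound $|P| \leq 2w^S_1$ to be tight at every weight-$1$ copy, so that \emph{every} weight-$1$ copy has both of its cyclic neighbours of weight~$3$. Together with Lemma~\ref{lem-upper-weight3}, this means the cyclic sequence of copy-weights alternates strictly between $1$ and $3$. In particular $n$ must be even, no copy has weight~$2$, and hence $w^S_1 = w^S_3 = n/2$. The converse implication in the ``if and only if'' is tautological.

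I do not anticipate a real obstacle; the two preceding lemmas have already done the structural work, and what remains is essentially a handshake count on the cycle of copies followed by a one-line structural deduction for the equality case. The only care needed is to keep indices cyclic and to remember that the hypothesis $n\geq 4$ is what permits the use of Lemma~\ref{lem-upper-weight3}.
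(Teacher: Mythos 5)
Your double count of adjacent (weight-$3$, weight-$1$) pairs is correct and is exactly the argument the paper intends: the paper states this corollary without proof, as an immediate consequence of Lemma~\ref{lem-upper-weight3}, and your handshake count on the cycle of copies is the natural way to make that implication explicit. The inequality $w^S_1 \geq w^S_3$ is therefore fully established, and your treatment of the equality case is also correct \emph{provided} $w^S_3 > 0$: tightness forces every weight-$1$ copy to be flanked by weight-$3$ copies, Lemma~\ref{lem-upper-weight3} forces the converse, and the alternation propagates around the whole cycle.

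There is, however, one degenerate case that your equality argument (and the paper's unstated one) does not cover: $w^S_1 = w^S_3 = 0$, i.e.\ every copy has weight $2$, which is not excluded by Corollary~\ref{cor-weight1} or Lemma~\ref{lem-upper-weight}. In that case equality holds vacuously but the conclusion $w^S_1 = w^S_3 = n/2$ fails, so the ``only if'' direction as you have written it is not established; your alternation step needs at least one copy of weight $1$ or $3$ to anchor it. This case is not cosmetic: an all-weight-$2$ minimal dominating set would have size $2n$ and, for odd $n$, would contradict the theorem this corollary is used to prove. Ruling it out needs a separate argument. One route: if a weight-$2$ copy contains its $a$-vertex, a private-neighbour analysis of its two $S$-vertices forces an adjacent copy of weight at most $1$; hence in an all-weight-$2$ set no copy contains its $a$-vertex, so every vertex of $S$ must find its private neighbour on the $b$-cycle (length $n$) or the $cd$-cycle (length $2n$), making $S$ irredundant on each of those cycles and bounding $|S|$ by $\lfloor n/2 \rfloor + n < 2n$, a contradiction. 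With that case closed, your proof is complete.
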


We are now ready to prove the main result of this section.

\begin{theorem}Consider a graph $J_n$ for $n \geq 3$. Then,

$$\Gamma(J_n) = \left\{\begin{aligned}2n\;\;\;, & \mbox{ if } n\mbox{ is even,}\\2n-1, & \mbox{ if } n\mbox{ is odd.}\end{aligned}\right.$$\end{theorem}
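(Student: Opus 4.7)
The plan is to combine an upper bound derived from Corollary \ref{cor-upper-w13} with an explicit construction of a minimal dominating set that matches it.

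For the upper bound (with $n \geq 4$), let $S$ be any minimal dominating set. By Lemma \ref{lem-upper-weight} every copy has weight in $\{1,2,3\}$, so $w_1^S + w_2^S + w_3^S = n$ and
\[
|S| \;=\; w_1^S + 2w_2^S + 3w_3^S \;=\; 2n + (w_3^S - w_1^S).
\]
Corollary \ref{cor-upper-w13} asserts $w_3^S \le w_1^S$, with equality only when $n$ is even and $w_1^S = w_3^S = n/2$. Hence $|S| \le 2n$ in all cases, and $|S| \le 2n-1$ whenever $n$ is odd. The base case $n = 3$, which is outside the hypothesis of Corollary \ref{cor-upper-w13}, is handled by the mixed-integer linear programming formulation mentioned in the introduction.

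For the matching lower bound, I would construct a minimal dominating set of the prescribed size by alternating weights around the cycle of copies. For $n$ even, use the cyclic weight pattern $3,1,3,1,\ldots,3,1$; for $n$ odd, use $3,1,3,1,\ldots,3,1,1$, with the extra weight-$1$ copy inserted so that no two weight-$3$ copies become adjacent. In each weight-$3$ copy $J^i$ put $\{b^i,c^i,d^i\} \subset S$, and in each weight-$1$ copy $J^i$ put $\{a^i\} \subset S$. The size totals $2n$ or $2n-1$ as required. Domination is immediate: in a weight-$1$ copy, $a^i$ dominates $b^i,c^i,d^i$; in a weight-$3$ copy, $a^i$ is dominated by the three outer vertices, which themselves lie in $S$.

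The main step is to verify minimality, which I would do via the uniform invariant that every $v \in S$ satisfies $N[v] \cap S = \{v\}$, i.e.\ depends on itself. For $a^i$ in a weight-$1$ copy this is immediate since $b^i,c^i,d^i \notin S$. For $b^i$ (and analogously $c^i,d^i$) in a weight-$3$ copy, the only external neighbours of $b^i$ lie in adjacent copies which have weight $1$ and contain only an $a$-vertex in $S$; hence $b^i$ also depends on itself. Once this invariant is checked, deleting any $v \in S$ leaves $v$ undominated, so $S$ is minimal. The anticipated pitfalls are the ``twist'' at the boundary (absorbed by the paper's convention that the twist can be relabelled away from any local view) and, in the odd case, the two adjacent weight-$1$ copies; the latter is harmless because both such copies contribute only an $a$-vertex to $S$, and these automatically retain self-dependence. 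Combining the upper bound with this construction yields the stated value of $\Gamma(J_n)$ in both parities.
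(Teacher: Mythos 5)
Your proposal is correct and follows essentially the same route as the paper: the upper bound via $|S| = 2n + w^S_3 - w^S_1$ together with Corollary \ref{cor-upper-w13}, the base case $n=3$ by the MILP formulation, and the matching construction with $a^i$ in weight-$1$ copies and $\{b^i,c^i,d^i\}$ in weight-$3$ copies is exactly the configuration of Figure \ref{fig-minimal}. Your explicit minimality check (every vertex of $S$ is its own private neighbour) correctly fills in the step the paper leaves as ``easy to check.''
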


\begin{proof}We use the first formulation for upper domination from \cite{upperdom} to confirm that $\Gamma(J_3) = 5$. Then, suppose that $S$ is a minimal dominating set for $J_n$, for $n \geq 4$. From Lemma \ref{lem-upper-weight} we have $w^S_0 = w^S_4 = 0$. Hence, we have $w^S_1 + w^S_2 + w^S_3 = n$, and $w^S_1 + 2w^S_2 + 3w^S_3 = |S|$. Combining these, we obtain $|S| = 2n + w^S_3 - w^S_1$. From Corollary \ref{cor-upper-w13} we know that $w^S_1 \geq w^S_3$, and the inequality is strict if $n$ is odd. Hence, $|S| \leq 2n$ if $n$ is even, and $|S| \leq 2n-1$ if $n$ is odd. Then we just need to obtain the corresponding lower bounds.

Suppose that $n$ is even. It is easy to check that if we repeat the configuration displayed in Figure \ref{fig-minimal} part (a) $n/2$ times, what results is a minimal dominating set with weight $2n$. Hence, if $n$ is even, we have $\Gamma(J_n) \geq 2n$. Then suppose that $n$ is odd. Again, it is easy to check that if we repeat the configuration displayed in Figure \ref{fig-minimal} part (a) $(n-1)/2$ times, and then use the configuration displayed in Figure \ref{fig-minimal} part (b) for the final copy, what results is a minimal dominating set with weight $2n-1$. Hence, if $n$ is odd, $\Gamma(J_n) \geq 2n-1$.
\end{proof}

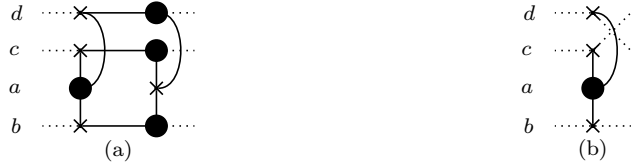
\begin{figure}[h!]\begin{center}\begin{tikzpicture}[smallgraph]\flower{2} \patt{1.5}{(a)} \inS{a}{1} \inS{b}{2} \inS{c}{2} \inS{d}{2} \notS{b}{1} \notS{c}{1} \notS{d}{1} \notS{a}{2}\end{tikzpicture} \hspace{4cm} \begin{tikzpicture}[smallgraph]\flowerend{1} \patt{1}{(b)} \inS{a}{1} \notS{b}{1} \notS{c}{1} \notS{d}{1}\end{tikzpicture}
\caption{The configuration for upper domination which gives the desired lower bound for $\Gamma(J_n)$ for $n \geq 3$. Part (a) can be repeated as many times as necessary. Then, if $n$ is odd, use part (b) to finish. The result is a minimal dominating set with weight $2n$ if $n$ is even, or weight $2n - 1$ if $n$ is odd.\label{fig-minimal}}\end{center}\end{figure}

\section{Upper Bounds by Construction}\label{sec-upper}In the upcoming sections, we will determine lower bounds for the 2-domination, total domination, connected domination, secure domination, and weak Roman domination numbers of flower snarks. To obtain equality, we will require corresponding upper bounds, which we provide here. We leave it as an exercise to the reader to verify that the configurations given here satisfy the requirements of the various kinds of domination, and that they imply the appropriate upper bounds for Theorem \ref{thm-overall}.

For 2-domination, the configuration shown in Figure \ref{fig-2dom} can be repeated as often as necessary, truncating the final time to get the desired size.

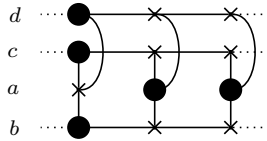
\begin{figure}[h!]\begin{center}\begin{tikzpicture}[smallgraph]\flower{3} \inS{b}{1} \inS{c}{1} \inS{d}{1} \inS{a}{2} \inS{a}{3} \notS{a}{1} \notS{b}{2} \notS{c}{2} \notS{d}{2} \notS{b}{3} \notS{c}{3} \notS{d}{3}\end{tikzpicture}
\caption{The configuration for 2-domination which gives the desired upper bound for $\gamma_2(J_n)$. The result is a 2-dominating set of weight $\lceil\frac{5n}{3}\rceil$ if $n \neq 1\mbox{ mod }3$, or weight $\lceil\frac{5n}{3}\rceil + 1$ if $n = 1\mbox{ mod }3$.\label{fig-2dom}}\end{center}\end{figure}

For total domination, the configuration shown in Figure \ref{fig-total} part (a) can be repeated as often as necessary. Then the configurations in parts (b), (c) and (d) can be used to complete the remaining copies.

\begin{figure}[h!]\begin{center}\begin{tikzpicture}[smallgraph]\flower{4} \patt{2.5}{(a)} \inS{b}{1} \inS{c}{2} \inS{d}{2} \inS{c}{3} \inS{d}{3} \inS{b}{4} \notS{a}{1} \notS{c}{1} \notS{d}{1} \notS{a}{2} \notS{b}{2} \notS{a}{3} \notS{b}{3} \notS{a}{4} \notS{c}{4} \notS{d}{4}\end{tikzpicture} \;\;\;\;\; \begin{tikzpicture}[smallgraph]\flowerend{1} \patt{1}{(b)} \inS{a}{1} \inS{b}{1} \notS{c}{1} \notS{d}{1}\end{tikzpicture} \;\;\;\;\; \begin{tikzpicture}[smallgraph]\flowerend{2} \patt{1.5}{(c)} \inS{a}{1} \inS{b}{1} \inS{a}{2} \inS{b}{2} \notS{c}{1} \notS{d}{1} \notS{c}{2} \notS{d}{2}\end{tikzpicture} \;\;\;\;\; \begin{tikzpicture}[smallgraph]\flowerend{3} \patt{2}{(d)} \inS{b}{1} \inS{a}{2} \inS{c}{2} \inS{d}{2} \inS{b}{3} \notS{a}{1} \notS{c}{1} \notS{d}{1} \notS{b}{2} \notS{a}{3} \notS{c}{3} \notS{d}{3}\end{tikzpicture}
\caption{The configuration for total domination which gives the desired upper bound for $\gamma_t(J_n)$ for $n \geq 3$. Part (a) can be repeated as many times as necessary. If $n = 0\mbox{ mod }4$ this is sufficient. If $n = 1\mbox{ mod }4$, use (b) to finish. If $n = 2\mbox{ mod }4$, use part (c) to finish. If $n = 3\mbox{ mod }4$, use part (d) to finish. The result is a total dominating set of weight $\lceil\frac{3n}{2}\rceil$ if $n \neq 2\mbox{ mod }4$, or weight $\frac{3n}{2} + 1$ if $n = 2\mbox{ mod }4$.\label{fig-total}}\end{center}\end{figure}
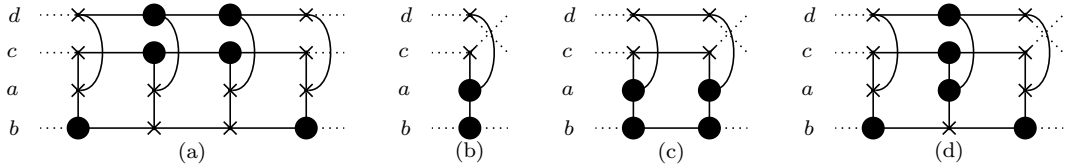

For connected domination, the configuration shown in Figure \ref{fig-connected} can be repeated as often as necessary, truncating the final time to get the desired size. If $n \neq 1\mbox{ mod }4$ the result is connected dominating. If $n = 1\mbox{ mod }4$, then removing $d^1$ and adding $b^1$ gives the desired result.

\begin{figure}[h!]\begin{center}\begin{tikzpicture}[smallgraph]\flower{4} \inS{d}{1} \inS{a}{2} \inS{b}{2} \inS{d}{2} \inS{d}{3} \inS{a}{4} \inS{c}{4} \inS{d}{4} \notS{a}{1} \notS{b}{1} \notS{c}{1} \notS{c}{2} \notS{a}{3} \notS{b}{3} \notS{c}{3} \notS{b}{4}\end{tikzpicture}\caption{The configuration for connected domination which gives the desired upper bound for $\gamma_c(J_n)$ for $n \geq 3$. It may be repeated as many times as necessary, truncating the final time to get the final size. If $n = 1\mbox{ mod }4$, then in the first copy, remove $d^1$ and add $b^1$. The result is a connected dominating set of weight $2n$ if $n$ is even, or $2n - 1$ if $n$ is odd.\label{fig-connected}}\end{center}\end{figure}
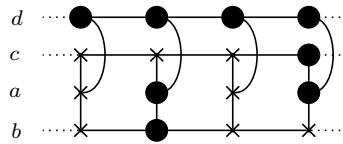

For secure domination, the configuration shown in Figure \ref{fig-secure} part (a) can be repeated as often as necessary. Then the configurations in parts (b), (c), (d), and (e) can be used to complete the remaining copies. Since secure domination is an upper bound for weak Roman domination, these configurations also give an upper bound for weak Roman domination.

\begin{figure}[h!]\begin{center}\begin{tikzpicture}[smallgraph]\flower{4} \patt{2.5}{(a)} \inS{b}{1} \inS{c}{2} \inS{d}{2} \inS{b}{3} \inS{c}{3} \inS{d}{4} \notS{a}{1} \notS{c}{1} \notS{d}{1} \notS{a}{2} \notS{b}{2} \notS{a}{3} \notS{d}{3} \notS{a}{4} \notS{b}{4} \notS{c}{4}\end{tikzpicture} \hfill \begin{tikzpicture}[smallgraph]\flowerend{5} \patt{3}{(c)} \inS{b}{1} \inS{c}{2} \inS{d}{2} \inS{b}{3} \inS{c}{3} \inS{b}{4} \inS{c}{5} \inS{d}{4} \notS{a}{1} \notS{c}{1} \notS{d}{1} \notS{a}{2} \notS{b}{2} \notS{a}{3} \notS{d}{3} \notS{a}{4} \notS{c}{4} \notS{a}{5} \notS{b}{5} \notS{d}{5}\end{tikzpicture} \;\;\;\;\; \begin{tikzpicture}[smallgraph]\flowerend{2} \patt{1.5}{(d)} \inS{b}{1} \inS{c}{1} \inS{c}{2} \inS{d}{2} \notS{a}{1} \notS{d}{1} \notS{a}{2} \notS{b}{2}\end{tikzpicture}\\
\begin{tikzpicture}[smallgraph]\flowerend{4} \patt{2.5}{(b)} \inS{b}{1} \inS{c}{2} \inS{d}{2} \inS{b}{3} \inS{c}{3} \inS{c}{4} \inS{d}{4} \notS{a}{1} \notS{c}{1} \notS{d}{1} \notS{a}{2} \notS{b}{2} \notS{a}{3} \notS{d}{3} \notS{a}{4}\end{tikzpicture} \hfill \begin{tikzpicture}[smallgraph]\flowerend{7} \patt{4}{(e)} \inS{b}{1} \inS{c}{2} \inS{d}{2} \inS{a}{3} \inS{b}{3} \inS{a}{4} \inS{a}{5} \inS{c}{5} \inS{b}{6} \inS{d}{6} \inS{c}{7} \notS{a}{1} \notS{c}{1} \notS{d}{1} \notS{a}{2} \notS{b}{2} \notS{c}{3} \notS{d}{3} \notS{b}{4} \notS{c}{4} \notS{d}{4} \notS{b}{5} \notS{d}{5} \notS{a}{6} \notS{c}{6} \notS{a}{7} \notS{b}{7} \notS{d}{7} \node[white] at (8.25,1) {};\end{tikzpicture}
\caption{The configuration for secure domination which gives the desired upper bound for $\gamma_s(J_n)$ for $n \geq 4$. Part (a) can be repeated as many times as necessary. If $n = 0\mbox{ mod }4$, use part (b) to finish. If $n = 1\mbox{ mod }4$, use part (c) to finish. If $n = 2\mbox{ mod }4$, use part (d) to finish. If $n = 3\mbox{ mod }4$, use part (e) to finish. In all cases, the result is a secure dominating set of weight $\lceil\frac{3n+1}{2}\rceil$.\label{fig-secure}}\end{center}\end{figure}
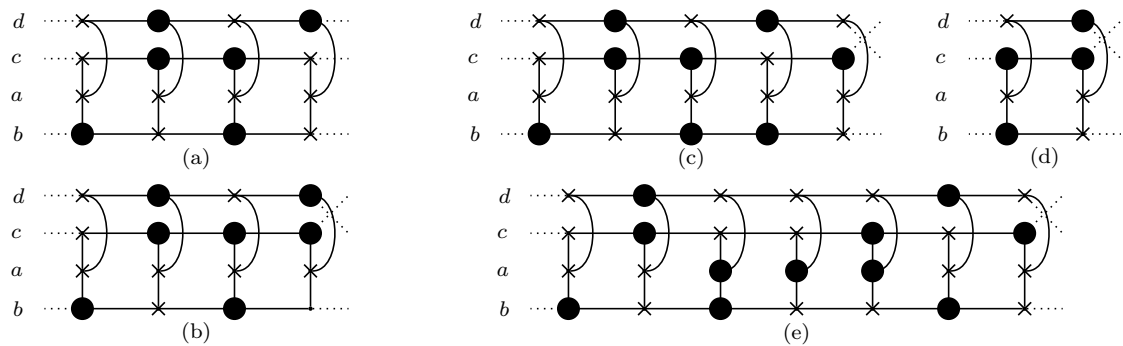

\section{2-domination}

In this section, we will determine the 2-domination numbers for flower snarks.

\begin{lemma}Consider the graph $J_n$ for $n \geq 3$, and a 2-dominating set $S$. If the copy $J^i$ has weight 1 in $S$, then $a^i \in S$.\label{lem-2dom-weight1}\end{lemma}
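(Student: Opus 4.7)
The plan is a short proof by contradiction that exploits the fact that $a^i$ is the degree-$3$ center of the $K_{1,3}$ copy $J^i$ and has no neighbours outside $J^i$.

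First, I would assume for contradiction that $J^i$ has weight $1$ in $S$ but $a^i \notin S$. Then the single vertex of $S$ lying in $J^i$ must be one of $b^i, c^i, d^i$.

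Next, I would apply the definition of $2$-domination to the vertex $a^i$. By construction of $J_n$, the closed neighbourhood of $a^i$ is exactly $\{a^i, b^i, c^i, d^i\}$, so every neighbour of $a^i$ lies in $J^i$. Since $a^i \notin S$, the $2$-domination condition requires that $a^i$ have at least two neighbours in $S$, which forces at least two of $b^i, c^i, d^i$ to belong to $S$. But this makes the weight of $J^i$ at least $2$, contradicting the assumption that $J^i$ has weight $1$.

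There is no real obstacle here: the statement is essentially a direct consequence of the local structure of the $K_{1,3}$ copies. The only thing to be careful about is to note explicitly that $a^i$ has no neighbours outside $J^i$, so that no external vertex of $S$ can help dominate it — this is the feature of the flower snark construction that makes the argument immediate.
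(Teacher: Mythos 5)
Your proposal is correct and is essentially identical to the paper's own proof: both argue that $a^i$ has all its neighbours inside $J^i$, so if $a^i \notin S$ the 2-domination condition would force at least two of $b^i, c^i, d^i$ into $S$, contradicting the weight-1 assumption. No further comment is needed.
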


\begin{proof}Recall that $a^i$ is adjacent only to other vertices in $J^i$. From the definition of 2-domination, if $a^i \not\in S$ then it must have at least two neighbours in $S$. Since $J^i$ has weight 1, this is impossible, and so $a^i \in S$.\end{proof}

\begin{theorem}Consider the graph $J_n$ for $n \geq 3$, and a 2-dominating set $S$. Then any copy with weight 1 in $S$ has a neighbouring copy with weight 3 or 4 in $S$.\label{thm-2dom-13}\end{theorem}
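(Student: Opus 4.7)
The plan is to argue by contradiction. Suppose $J^i$ has weight $1$ in $S$ but both neighbouring copies $J^{i-1}$ and $J^{i+1}$ have weight at most $2$. By Lemma \ref{lem-2dom-weight1}, the unique $S$-vertex of $J^i$ is $a^i$, so $b^i, c^i, d^i \notin S$. Each of these three must have two neighbours in $S$, and $a^i$ supplies only one, so the second must come from the cycles: at least one of $b^{i-1}, b^{i+1}$ lies in $S$, at least one of $c^{i-1}, c^{i+1}$ lies in $S$, and at least one of $d^{i-1}, d^{i+1}$ lies in $S$. Equivalently, among the six non-$a$ positions of $J^{i-1} \cup J^{i+1}$, at least three lie in $S$ and they must cover all three of the $b$-, $c$-, $d$-rows.

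Next I would dispose of the case that one of the neighbours, say $J^{i-1}$, has weight $1$. By Lemma \ref{lem-2dom-weight1} applied to $J^{i-1}$, the vertices $b^{i-1}, c^{i-1}, d^{i-1}$ are all outside $S$, so the entire burden of covering the three rows falls on $J^{i+1}$. This forces $b^{i+1}, c^{i+1}, d^{i+1}$ all to lie in $S$, giving $J^{i+1}$ weight at least $3$ and contradicting the assumption.

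So both neighbours have weight exactly $2$. Focus on $J^{i-1}$ and split on whether $a^{i-1}$ lies in $S$. If $a^{i-1} \notin S$, then two of $b^{i-1}, c^{i-1}, d^{i-1}$ lie in $S$ and the third, say $d^{i-1}$, does not; but the neighbours of $d^{i-1}$ are $a^{i-1}$, $d^{i-2}$, and $d^i$, and of these $a^{i-1}$ and $d^i$ are already known to be outside $S$, leaving at most one neighbour in $S$ and violating $2$-domination. Hence $a^{i-1} \in S$, and symmetrically $a^{i+1} \in S$. But then each of $J^{i-1}$ and $J^{i+1}$ contributes only one non-$a$ vertex to $S$, giving just two of the six non-$a$ positions — not enough to cover all three rows. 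This is the desired contradiction.

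The argument is not technically deep; its main obstacle is simply the careful bookkeeping of which vertices are forced in or out by the weight bound together with the $2$-domination requirement. The crucial observation that unlocks the clean pigeonhole finish is that a weight-$2$ neighbour whose $a$-vertex is absent from $S$ already fails to $2$-dominate its own ``missing'' $b/c/d$ vertex, because two of that vertex's three neighbours are pinned outside $S$ by the structure of $J^i$.
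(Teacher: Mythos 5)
Your proof is correct, and it reaches the contradiction by a slightly different decomposition than the paper's. The paper fixes (up to symmetry) a second dominator $b^{i-1}$ for $b^i$ and then splits on whether $c^{i-1}\in S$, in each branch tracing a chain of forced memberships until some specific vertex ($d^{i-1}$ in one branch, $b^{i+1}$ in the other) is left with at most one neighbour in $S$. You instead organize the case analysis around the $a$-vertices of the two neighbouring copies: you first rule out a weight-$1$ neighbour (it would force the other neighbour to weight $3$), then observe that a weight-$2$ neighbour with its $a$-vertex outside $S$ already fails to $2$-dominate its own missing $b/c/d$-vertex (since two of that vertex's three neighbours are pinned outside $S$ by the structure around $J^i$), and finish with a pigeonhole count: two non-$a$ vertices cannot cover the three rows that $b^i,c^i,d^i$ require. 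Both arguments are elementary and stay within the window $J^{i-1},J^i,J^{i+1}$; yours isolates a reusable structural observation and replaces the forced-chain bookkeeping with a counting step, while the paper's version is more concrete and maps directly onto its figures. One thing worth stating explicitly in your write-up is the standing convention that the twist can be relabelled away from the window under consideration, so that the neighbours of $d^{i-1}$ really are $a^{i-1}$, $d^{i-2}$ and $d^i$; with that noted, the argument is complete for all $n\geq 3$.
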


\begin{proof}Without loss of generality, suppose that $J^2$ has weight 1. Then, from Lemma \ref{lem-2dom-weight1} we have $a^2 \in S$. Then, suppose that its neighbours $J^1$ and $J^3$ both have weight less than 3. At this stage, vertices $b^2$, $c^2$ and $d^2$ have only one neighbour in $S$, so they each need at least one more. Without loss of generality, suppose that $b^1 \in S$. Then, consider the case when $c^1 \in S$. Since $J^1$ has weight less than 3, this implies that $a^1 \not\in S$ and $d^1 \not\in S$. However, it is then impossible for $d^1$ to have two neighbours in $S$. This situation is displayed in part (a) of Figure \ref{fig-2dom-13}.

\begin{figure}[h!]\begin{center}\begin{tikzpicture}[smallgraph]\flower{2} \genlab{2} \inS{a}{2} \notS{b}{2} \notS{c}{2} \notS{d}{2} \inSa{b}{1} \inSa{c}{1} \notSa{a}{1} \notSa{d}{1}\node[white] [label=right:{(a)}] at (3,1.25) {}; \end{tikzpicture} \hspace*{2.5cm} \begin{tikzpicture}[smallgraph]\flower{3} \genlab{3} \inS{a}{2} \notS{b}{2} \notS{c}{2} \notS{d}{2} \inSa{a}{1} \inSa{b}{1} \notSa{c}{1} \notSa{d}{1} \inSa{c}{3} \inSa{d}{3} \notSa{a}{3} \notSa{b}{3}\node[white] [label=right:{(b)}] at (4,1.25) {}; \end{tikzpicture}
\caption{The two situations described in the proof of Theorem \ref{thm-2dom-13}. In part (a), $d^1$ cannot have two neighbours in $S$. In part (b), copy $b^3$ cannot have two neighbours in $S$.\label{fig-2dom-13}}\end{center}\end{figure}
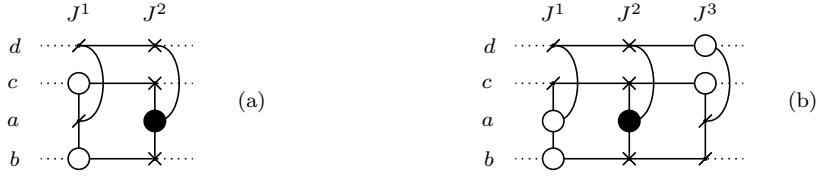

Hence, we must have $c^1 \not\in S$. Then, $c^1$ must have two neighbours in $S$, which implies that $a^1 \in S$. Since $J^1$ has weight less than 3, this implies that $d^1 \not\in S$. Then, in order for $c^2$ and $d^2$ to have two neighbours in $S$, we must have $c^3 \in S$ and $d^3 \in S$, respectively. Since $J^3$ has weight less than 3, this implies that $a^3 \not\in S$ and $b^3 \not\in S$. However, it is then impossible for $b^3$ to have two neighbours in $S$, completing the proof. This situation is displayed in part (b) of Figure \ref{fig-2dom-13}.\end{proof}

We are now ready to prove the main result of this theorem.

\begin{theorem}\label{thm-2dom}Consider the graph $J_n$ for $n \geq 3$. Then,

$$\gamma_2(J_n) = \left\{\begin{aligned}\left\lceil\frac{5n}{3}\right\rceil\;, & \mbox{ if } n \neq 1\mbox{ mod }3,\\\frac{5n+4}{3}, & \mbox{ if } n = 1\mbox{ mod }3,\end{aligned}\right.$$\end{theorem}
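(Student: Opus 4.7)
My plan has three stages: first, appeal to the construction in Section \ref{sec-upper} (Figure \ref{fig-2dom}) for the upper bound; second, derive a clean lower bound of $\lceil 5n/3\rceil$ from Theorem \ref{thm-2dom-13} by weighted counting; and third, when $n\equiv 1\pmod 3$, sharpen the bound by one via a structural argument that rules out the unique extremal LP profile.

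For the general lower bound, I would let $S$ be any 2-dominating set and write $w_k = w_k^S$. By Corollary \ref{cor-weight1} we have $w_0 = 0$, and trivially $w_k = 0$ for $k\ge 5$, so $w_1+w_2+w_3+w_4 = n$ and $|S| = n + (w_2+2w_3+3w_4)$. Theorem \ref{thm-2dom-13} says every weight-1 copy has a neighbour of weight $\ge 3$; because each copy has exactly two cyclic neighbours, double counting gives $w_1 \le 2(w_3+w_4)$, equivalently $w_2+3w_3+3w_4 \ge n$. Averaging then yields
\[
w_2+2w_3+3w_4 \;=\; \tfrac{2}{3}(w_2+3w_3+3w_4) + \tfrac{1}{3}w_2 + w_4 \;\ge\; \tfrac{2n}{3},
\]
so $|S|\ge 5n/3$, and by integrality $|S|\ge \lceil 5n/3\rceil$. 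This already matches the claimed value whenever $n\not\equiv 1\pmod 3$.

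The harder case is $n = 3k+1$, where the LP only gives $5k+2$ but the theorem claims $5k+3$. My plan here is to suppose $|S|=5k+2$ and derive a contradiction. Combining $w_2+2w_3+3w_4 = 2k+1$ with $w_2+3w_3+3w_4\ge 3k+1$ forces $w_3\ge k$, and then the count $w_1+w_2+w_3+w_4 = 3k+1$ pins down the unique non-negative integer profile $(w_1,w_2,w_3,w_4) = (2k,1,k,0)$ with equality $w_1 = 2(w_3+w_4)$. The tightness of the double count forces each weight-3 copy to be flanked by two weight-1 copies. Viewing the cycle of copies as $k$ arcs between consecutive weight-3 copies whose lengths sum to $2k+1$, Theorem \ref{thm-2dom-13} prohibits weight-1 interior copies (they would have no weight-3 neighbour), so the single available weight-2 copy must populate an arc interior. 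A short case check on $a_1+a_2+a_3=k,\ a_1+2a_2+3a_3=2k+1$ with $a_3\le 1$ then forces exactly one length-3 arc (middle vertex weight 2) together with $k-1$ length-2 arcs.

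The main obstacle is eliminating this last surviving profile. My plan is a local analysis mirroring the flanking arguments of Lemma \ref{lem-upper-weight3}. At each weight-3 copy $J^i$, both neighbouring copies are weight-1, so by Lemma \ref{lem-2dom-weight1} their only $S$-vertex is $a^{i\pm 1}$; consequently "Case A" ($a^i\in S$) would leave the omitted vertex of $\{b^i,c^i,d^i\}$ with $a^i$ as its sole $S$-neighbour, violating 2-domination. Hence at every weight-3 copy we are in "Case B" with $a^i\notin S$ and $\{b^i,c^i,d^i\}\subset S$. Now consider the weight-2 copy $J^j$ at the interior of the length-3 arc: its two neighbouring copies are weight-1, so their only $S$-vertex is $a^{j\pm 1}$. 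If $a^j\in S$, the omitted member of $\{b^j,c^j,d^j\}$ has only $a^j$ as an $S$-neighbour; if $a^j\notin S$, the omitted member has no $S$-neighbour at all. Either way some vertex of $J^j$ is not 2-dominated, contradicting our assumption and giving $|S|\ge 5k+3 = (5n+4)/3$, which together with the upper bound completes the proof.
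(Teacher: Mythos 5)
Your proof is correct, but it takes a genuinely different route from the paper's on both halves of the lower bound. For $|S|\ge\lceil 5n/3\rceil$, the paper sums the claim that every window of three consecutive copies has total weight at least $5$ over all $n$ windows; your global double count $w_1\le 2(w_3+w_4)$ reaches the same bound and is in fact more robust, since that window claim is not an immediate consequence of Corollary \ref{cor-weight1} and Theorem \ref{thm-2dom-13} alone (a $121$ window has weight $4$ and is only excluded by the kind of local argument, via Lemma \ref{lem-2dom-weight1}, that you deploy at the very end). For $n\equiv 1\pmod 3$ the paper assumes $|S|=(5n+1)/3$, deduces that exactly one window of three consecutive copies has weight $6$ while all others have weight $5$, enumerates the candidate patterns $123$, $132$, $222$, $312$ for that window, and propagates the window sums to contradictions; you instead pin down the exact weight profile $(w_1,w_2,w_3,w_4)=(2k,1,k,0)$, use tightness of the double count to force each weight-$3$ copy to be flanked by two weight-$1$ copies, reduce the whole cycle to $k-1$ blocks $311$ plus a single block $3121$, and eliminate the weight-$2$ copy of that last block by a direct vertex-level 2-domination check. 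Both arguments rest on the same ingredients (Corollary \ref{cor-weight1}, Theorem \ref{thm-2dom-13}, Lemma \ref{lem-2dom-weight1}); the paper's window enumeration is shorter once its weight-$5$ window claim is granted, while yours makes the extremal structure completely explicit and is self-contained at the one point where the paper's justification is thinnest.
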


\begin{proof}The upper bound was established in Section \ref{sec-upper}. Then, from Corollary \ref{cor-weight1} we know that each copy has weight at least 1. This, combined with Theorem \ref{thm-2dom-13}, implies that any three set of consecutive copies have weight at least 5. Hence we have $\gamma_2(S) \geq \lceil\frac{5n}{3}\rceil$. Hence, Theorem \ref{thm-2dom} is true for $n \neq 1\mbox{ mod }3$.

Suppose that $n = 1\mbox{ mod }3$, and we have a 2-dominating set $S$ such that $|S| < \frac{5n+4}{3}$. Hence, we must have $|S| = \frac{5n+1}{3}$. For each copy $J^i$, denote by $w^3(i)$ the combined weights of $J^i$, $J^{i+1}$, and $J^{i+2}$, where the superscripts are taken modulo $n$. Recall that $w^3(i) \geq 5$. Hence there must be a single value $k$ such that $w^3(k) = 6$ and $w^3(j) = 5$ for all $j \neq k$. Consider the copies $J^k$, $J^{k+1}$, $J^{k+2}$. We will consider the possible patterns they could have, and show that each is impossible. Since $w^3(k)$ has weight 6, the only possible patterns (up to symmetry) for copies $J^k$, $J^{k+1}$ and $J^{k+2}$ are 123, 132, 222, and 312.

Suppose that $J^k$, $J^{k+1}$ and $J^{k+2}$ meet either the pattern 123 or the pattern 132. Since $w^3(k+1) = 5$, this implies that $J^{k+3}$ has weight 0, which contradicts Corollary \ref{cor-weight1}.

Suppose next that $J^k$, $J^{k+1}$ and $J^{k+2}$ meet the pattern 222. Since $w^3(k+1) = w^3(k+2) = 5$, this implies that $J^{k+3}$ has weight 1, and $J^{k+4}$ has weight 2. However, this contradicts Theorem \ref{thm-2dom-13} since $J^{k+3}$ has no neighbour with weight 3 or 4. Hence, this is impossible.

Finally, suppose that $J^k$, $J^{k+1}$ and $J^{k+2}$ meet pattern 312. Since $w^3(k+1) = w^3(k+2) = w^3(k+3) = 5$, this implies that $J^{k+3}$ has weight 2, $J^{k+4}$ has weight 1, and $J^{k+5}$ has weight 2. Again, this contradicts Theorem \ref{thm-2dom-13} since $J^{k+4}$ has no neighbour with weight 3 or 4. Hence, all cases are impossible, and so $|S| \geq \frac{5n+4}{3}$, completing the proof.\end{proof}

\section{Total Domination}

In this section, we will determine the total domination numbers for flower snarks. We begin by identifying three patterns which cannot occur in total dominating sets of $J_n$.

\begin{theorem}Consider the graph $J_n$ for $n \geq 3$, and a total dominating set $S$. Then $S$ does not contain the patterns 111, 1121, or 12121.\label{thm-total-patterns}\end{theorem}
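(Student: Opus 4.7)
The plan is to first extract a single structural lemma about weight-1 copies, then apply it to each of the three forbidden patterns with a small amount of case analysis.

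For the lemma, suppose $J^i$ has weight $1$ in the total dominating set $S$. The unique vertex of $J^i \cap S$ cannot be $a^i$, since $a^i$'s only neighbors lie in $J^i$ and there would be no other vertex of $J^i \cap S$ to defend it. Hence $J^i \cap S = \{x^i\}$ for some $x \in \{b,c,d\}$. Then $a^i$ is dominated by $x^i$, but the three vertices $b^i, c^i, d^i$ each still require a neighbor in $S$: the one in $S$ needs it along its $b/c/d$-cycle (since $a^i \notin S$), while each of the other two is adjacent only to $a^i$ and its two same-type neighbors in $J^{i-1}, J^{i+1}$. Consequently $(J^{i-1} \cup J^{i+1}) \cap S$ must contain at least one vertex of each type $b$, $c$ and $d$.

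Pattern $111$ is then immediate: the middle copy demands three distinct type-representatives in the two adjacent copies, but two weight-1 copies together contribute only two vertices. For pattern $1121$, use the local $b/c/d$-symmetry of the flower snark to fix $J^1 \cap S = \{b^1\}$. The type requirement for $J^2$ then forces a $c$-type and a $d$-type vertex into $J^3 \cap S$, and since $J^3$ has weight $2$ we conclude $a^3 \notin S$ and $J^3 \cap S = \{c^3, d^3\}$. Branch on whether $J^2 \cap S$ equals $\{b^2\}$, $\{c^2\}$ or $\{d^2\}$. In each sub-case, the remaining vertices of $J^3$ that still need an $S$-neighbor (in particular $c^3$ and $d^3$ themselves, plus possibly $b^3$) propagate to $J^4$ and force at least two distinct vertices of $J^4$ into $S$, contradicting its weight-1 assumption.

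For pattern $12121$, again use $b/c/d$-symmetry to take $J^3 \cap S = \{b^3\}$, and use the palindrome symmetry of the pattern to assume $b^2 \in S$ (rather than $b^4$) is the $S$-neighbor defending $b^3$. The requirement that $c^3$ and $d^3$ each have an $S$-neighbor gives four sub-cases: (i) both $c^2, d^2 \in S$ overflows the weight of $J^2$; (ii) $c^2 \in S$ and $d^4 \in S$ (and the symmetric (iii)) forces $J^2 \cap S = \{b^2, c^2\}$, after which the undominated $d^2$ and the isolated $c^2$ each require a different-type vertex in $J^1$, breaking its weight-1 constraint; (iv) both $c^4, d^4 \in S$ pins down $J^2 \cap S = \{a^2, b^2\}$ and $J^4 \cap S = \{c^4, d^4\}$, but then $c^4$ and $d^4$ in $S$ each demand an $S$-neighbor, forcing $\{c^5, d^5\} \subseteq S$ and violating the weight-1 condition on $J^5$.

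The main obstacle is simply bookkeeping — no single case is deep, but the analysis must exhaust all sub-cases. The $b/c/d$-symmetry and the palindrome symmetry of pattern $12121$ are essential to keeping the number of branches manageable.
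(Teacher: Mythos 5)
Your proof is correct and rests on the same key observation as the paper's: a weight-1 copy cannot contain $a^i$ (it would be undominated), so each of $b^i, c^i, d^i$ must be dominated by a same-type vertex in an adjacent copy, and one counts the distinct types this forces into the neighbouring copies. The only real divergence is in pattern 12121, where the paper first deduces $a^2, a^4 \in S$ and concludes immediately that $J^2$ and $J^4$ can each dominate at most one of $b^3, c^3, d^3$; your four-way case split reaching out to $J^1$ and $J^5$ is longer but equally valid.
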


\begin{proof}From Corollary \ref{cor-weight1}, each copy of $J_n$ has weight at least 1 in $S$. Also, if a copy $J^i$ has weight 1, then $a^i \not\in S$ because otherwise $a^i$ itself is not dominated.

Suppose that $S$ has the pattern 111. Without loss of generality, suppose the three copies meeting this pattern are $J^1, J^2, J^3$. As indicated above, $a^2 \not\in S$. Regardless of which vertex from $J^2$ is in $S$, it does not dominate any of $b^2, c^2, d^2$. Also, any vertex from $J^1$ dominates at most one vertex from $J^2$, and likewise for any vertex from $J^3$. Hence there is at least one vertex in $J^2$ which is not dominated, contradicting the assumption that $S$ is a total dominating set.

Then, suppose that $S$ has the pattern 1121. Without loss of generality, suppose the four copies meeting this pattern are $J^1, J^2, J^3, J^4$. Using an equivalent argument to above, it must be the case that $b^2, c^2, d^2$ are dominated by one vertex from $J^1$ and two vertices from $J^3$. Hence, $a^3 \not\in S$. This means none of the vertices from $J^3$ dominate any of $b^3, c^3, d^3$. Also, any vertex from $J^2$ dominates at most one vertex from $J^3$, and likewise for any vertex from $J^4$. Hence there is at least one vertex in $J^3$ which is not dominated, contradicting the assumption that $S$ is a total dominating set.

Finally, suppose that $S$ has the pattern 12121. Without loss of generality, suppose the five copies meeting this pattern are $J^1, \hdots, J^5$. Since $J^1$ and $J^3$ are both weight 1, they can collectively dominate at most two vertices from $J^2$. Hence, $a^2 \in S$, and by an equivalent argument, $a^4 \in S$. Note that none of the vertices from $J^3$ can dominate any of $b^3$, $c^3$ or $d^3$. Also, the vertices from $J^2$ dominate at most one vertex from $J^3$, and likewise for the vertices from $J^4$. Hence there is at least one vertex in $J^3$ which is not dominated, contradicting the assumption that $S$ is a total dominating set.\end{proof}

An immediate observation arising from Corollary \ref{cor-weight1} and Theorem \ref{thm-total-patterns} is that any four consecutive copies must collectively have weight at least 6 in $S$, leading to the following corollary.

\begin{corollary}For $n \geq 4$, $\gamma_t(J_n) \geq \left\lceil\frac{3n}{2}\right\rceil.$\label{cor-total}\end{corollary}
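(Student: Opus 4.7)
The plan is to first confirm the observation that precedes the corollary, and then convert it into the advertised bound via a cyclic averaging argument. To justify that any four consecutive copies $J^i, J^{i+1}, J^{i+2}, J^{i+3}$ have combined weight at least six in a total dominating set $S$, I would enumerate all patterns with combined weight at most five and rule them out. By Corollary \ref{cor-weight1}, each copy has weight at least one, so the total over four copies is at least four. If the total equals four, the pattern is $1111$, which contains the forbidden pattern $111$ from Theorem \ref{thm-total-patterns}. If the total equals five, exactly one of the four copies has weight two and the others have weight one, giving the four possibilities $2111$, $1211$, $1121$, and $1112$; the first and last both contain the subpattern $111$, while the middle two are equivalent under reflection to the forbidden pattern $1121$ and are therefore excluded as well. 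Here I am using the symmetry remarks made earlier in the paper, which allow us to freely reverse the orientation of a local window.

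To promote this window bound to a global one, let $s_i$ denote the weight of $J^i$ in $S$. Summing the inequality $s_i + s_{i+1} + s_{i+2} + s_{i+3} \geq 6$ over all $i \in \{1, \ldots, n\}$, with indices taken modulo $n$, gives
\[
\sum_{i=1}^{n}\bigl(s_i + s_{i+1} + s_{i+2} + s_{i+3}\bigr) \geq 6n.
\]
The left-hand side counts each weight $s_j$ exactly four times (once for each of the four windows that contain $J^j$), so it equals $4|S|$. Hence $|S| \geq 3n/2$, and since $|S|$ is an integer we conclude $|S| \geq \lceil 3n/2 \rceil$, as required.

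The most delicate step is the case analysis showing that every length-four pattern of total weight at most five is forbidden; once that is in hand, the cyclic summation is mechanical. The only subtlety in the summation is that the counting identity relies on the four-copy window making sense cyclically for all starting indices, which is ensured by the hypothesis $n \geq 4$ (so that each vertex appears in exactly four distinct windows regardless of whether $n$ equals $4$ or is larger).
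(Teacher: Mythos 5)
Your proposal is correct and follows essentially the same route as the paper, which derives the corollary from the observation that Corollary \ref{cor-weight1} and the forbidden patterns $111$ and $1121$ of Theorem \ref{thm-total-patterns} force every four consecutive copies to have combined weight at least $6$. Your case analysis (including handling $1211$ by reflection) and the cyclic double-counting simply make explicit the details the paper leaves as ``an immediate observation.''
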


We are now ready to prove the main result of this section.

\begin{theorem}Consider the graph $J_n$ for $n \geq 3$. Then,

$$\gamma_t(J_n) = \left\{\begin{aligned}\left\lceil\frac{3n}{2}\right\rceil\;, & \mbox{ if } n \neq 2\mbox{ mod }4,\\\frac{3n}{2} + 1, & \mbox{ if } n = 2\mbox{ mod }4,\end{aligned}\right.$$\label{thm-total}\end{theorem}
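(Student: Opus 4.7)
The plan is to combine the upper bound from Section~\ref{sec-upper} with a matching lower bound, splitting cases on $n \bmod 4$. The base case $n = 3$ (where $\lceil 3n/2 \rceil = 5$ and $n \not\equiv 2 \pmod 4$) can be handled by direct MILP verification as described in the introduction. For $n \geq 4$ with $n \not\equiv 2 \pmod 4$, Corollary~\ref{cor-total} immediately delivers $\gamma_t(J_n) \geq \lceil 3n/2 \rceil$, matching the upper bound. The only remaining, and only nontrivial, case is $n \equiv 2 \pmod 4$, where Corollary~\ref{cor-total} only yields $\gamma_t(J_n) \geq 3n/2$, and I need to sharpen this to $\gamma_t(J_n) \geq 3n/2 + 1$.

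I would proceed by contradiction: suppose $S$ is a total dominating set with $|S| = 3n/2$. Let $a_i$ denote the weight of $J^i$ in $S$. The proof of Corollary~\ref{cor-total} (via Corollary~\ref{cor-weight1} and Theorem~\ref{thm-total-patterns}) gives $a_i + a_{i+1} + a_{i+2} + a_{i+3} \geq 6$ for every $i$, with indices taken modulo $n$. Summing this inequality cyclically over all $i$ produces
$$4|S| \;=\; \sum_{i=1}^n \bigl(a_i + a_{i+1} + a_{i+2} + a_{i+3}\bigr) \;\geq\; 6n,$$
since each $a_i$ appears in exactly four of the $n$ cyclic four-windows. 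Hence $|S| = 3n/2$ forces every four-consecutive window to have weight exactly $6$.

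Equality in every window means $a_{i+4} = a_i$ for all $i$ modulo $n$. Since $\gcd(n, 4) = 2$ when $n \equiv 2 \pmod 4$, the subgroup $\langle 4 \rangle$ of $\mathbb{Z}/n\mathbb{Z}$ is exactly the set of even residues, so $a_i$ depends only on the parity of $i$. Writing $a_i = \alpha$ for odd $i$ and $a_i = \beta$ for even $i$, the window identity becomes $2(\alpha + \beta) = 6$, and Corollary~\ref{cor-weight1} forces $\alpha, \beta \geq 1$, so $\{\alpha, \beta\} = \{1, 2\}$. Therefore the weight sequence around the cycle is $1, 2, 1, 2, \ldots$, which necessarily contains the sub-pattern $12121$ (since $n \equiv 2 \pmod 4$ and $n \geq 4$ together force $n \geq 6$), contradicting Theorem~\ref{thm-total-patterns}.

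The main obstacle I anticipate is spotting the averaging step: realizing that a global count of $3n/2$ forces \emph{every} local four-window to be tight, so that a rigidity argument becomes available instead of a delicate local case analysis. Once that observation is made, the periodicity conclusion $a_i = a_{i+2}$ and the final contradiction with the forbidden pattern $12121$ follow immediately from results already established in the section.
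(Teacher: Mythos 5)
Your proposal is correct and follows essentially the same route as the paper: upper bound by construction, the lower bound $\lceil 3n/2\rceil$ from the tight four-window count, and, for $n \equiv 2 \pmod 4$, the observation that a total dominating set of size $3n/2$ forces every four-window to have weight exactly $6$, yielding a periodic weight sequence that must contain the forbidden pattern $12121$. Your use of $a_{i+4}=a_i$ together with $\gcd(n,4)=2$ to force period $2$ is a slightly cleaner way of dispatching the paper's second sub-case (the repeating pattern $1122\cdots 1122$), which the paper instead rules out by noting it would require $4 \mid n$.
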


\begin{proof}The upper bound was established in Section \ref{sec-upper}. We use the formulation for total domination from \cite{burger} to confirm that $\gamma_t(J_3) = 5$. Then, suppose there is some value $k \geq 4$ such that Theorem \ref{thm-total} is false. If $k \neq 2\mbox{ mod }4$ then we have $\gamma_t(J_k) \leq \left\lceil\frac{3k}{2}\right\rceil - 1$, contradicting Corollary \ref{cor-total}. Hence, we must have $k = 2\mbox{ mod }4$. Hence, $\gamma_t(J_k) \leq \frac{3k}{2}$, and from Corollary \ref{cor-total} this implies that $\gamma_t(J_k) = \frac{3k}{2}$.

Suppose that we have a total dominating set $S$ with weight $\frac{3k}{2}$. Note that this implies that {\em every} set of four consecutive copies of $J_n$ has weight 6 in $S$; call this property 1. It is clear that property 1 implies that no copy has weight 4 in $S$. If a copy has weight 3 in $S$, then in order to satisfy property 1, the next three copies must have weight 1, which contradicts Theorem \ref{thm-total-patterns}. Hence, every copy has weight 1 or 2 in $S$; call this property 2. It can be easily checked that there are only two ways to satisfy properties 1 and 2 without contradicting Theorem \ref{thm-total-patterns}; either $S$ contains the repeated pattern $1212 \hdots 12$, or $S$ contains the repeated pattern $11221122 \hdots 1122$. In the former case, $S$ contains the pattern 12121 which by Theorem \ref{thm-total-patterns} is impossible. Hence, we must have the latter case. Also, the latter case is impossible because $k = 2\mbox{ mod }4$. Hence, all cases are impossible, completing the proof.\end{proof}

\section{Connected Domination}

In this section, we will determine the connected domination numbers for flower snarks. The following three remarks are obvious, but we list them here to aid the readability of two upcoming proofs.

\begin{remark}Suppose that $S$ is a connected dominating set for a graph $G$. If a new graph $H$ is created by either (a) adding an edge between two non-adjacent vertices in $G$, or (b) deleting a vertex $v \not\in S$ from $G$, then $S$ is also a connected dominating set for $H$.\label{rem-remove}\end{remark}

\begin{remark}Suppose that $S$ is a connected dominating set for a graph $G$, and that $G$ contains a degree 1 vertex $v \in S$ whose neighbour has degree larger than 1. If a new graph $H$ is created by deleting $v$ from $G$, then $S \setminus \{v\}$ is a connected dominating set for $H$.\label{rem-path}\end{remark}

\begin{remark}Suppose that $S$ is a connected dominating set for a graph $G$, and that $G$ contains a triangle $uvw$ such that $v$ is degree 2, and $v \in S$. If a new graph $H$ is created by deleting $v$ from $G$, then $S \setminus \{v\}$ is a connected dominating set for $H$.\label{rem-triangle}\end{remark}

In the following, we define $V^S(J^i) := S \cap (a^i, b^i, c^i, d^i)$, that is, $V^S(J^i)$ is the set of vertices from $J^i$ that are contained in $S$.

\begin{theorem}Consider the graph $J_n$ for $n \geq 4$, and let $S$ be a connected dominating set for $J_n$. Suppose that there is a copy $J^i$ such that either $a^i \not\in S$, or $J^i$ has weight 2 in $S$. Then $\overline{S} := S \setminus V^S(J^i)$ is a connected dominating set for $J_{n-1}$.\label{thm-connected-remove}\end{theorem}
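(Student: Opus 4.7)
The plan is to transform $J_n$ into a graph isomorphic to $J_{n-1}$ by a sequence of graph modifications, applying Remarks \ref{rem-remove}, \ref{rem-path}, and \ref{rem-triangle} to maintain a connected dominating set at each stage. By the paper's convention that we may relabel $J_n$ so that the twist in the $2n$-cycle lies outside the region we are examining, we may assume that the only outside neighbours of $V(J^i)$ are $b^{i-1}, b^{i+1}, c^{i-1}, c^{i+1}, d^{i-1}, d^{i+1}$. For $n \geq 4$, each of the three pairs $\{b^{i-1},b^{i+1}\}$, $\{c^{i-1},c^{i+1}\}$, $\{d^{i-1},d^{i+1}\}$ consists of non-adjacent vertices in $J_n$, so three applications of Remark \ref{rem-remove}(a) add these ``bypass'' edges, producing a graph $G^+$ for which $S$ is still a connected dominating set. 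A routine check confirms that $G^+$ with the four vertices of $J^i$ removed is isomorphic to $J_{n-1}$, since the new edges complete the required $(n-1)$-cycle on the $b$-vertices and $2(n-1)$-cycle on the $c,d$-vertices.

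It then suffices to peel the four vertices of $V(J^i)$ off $G^+$ one at a time, in an order chosen to fit the remarks. For Case A, where $a^i \notin S$, first delete $a^i$ using Remark \ref{rem-remove}(b); for each $x \in \{b, c, d\}$ the vertex $x^i$ now has degree $2$ and sits in a triangle with $x^{i-1}$ and $x^{i+1}$, so delete it using Remark \ref{rem-triangle} if $x^i \in S$ and using Remark \ref{rem-remove}(b) otherwise. For Case B, where $a^i \in S$ and $J^i$ has weight $2$, relabel so that $V^S(J^i) = \{a^i, b^i\}$; delete $c^i$ and $d^i$ (neither in $S$) using Remark \ref{rem-remove}(b), then delete the now-pendant vertex $a^i$ (whose unique remaining neighbour $b^i$ has degree $3$) using Remark \ref{rem-path}, and finally delete $b^i$, which now has degree $2$ in the triangle $b^{i-1}b^ib^{i+1}$, using Remark \ref{rem-triangle}. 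In both cases, after all four deletions the resulting graph is $J_{n-1}$ and the resulting set is $S \setminus V^S(J^i) = \overline{S}$.

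The main obstacle I foresee is purely bookkeeping: one must carefully verify that $G^+ \setminus V(J^i) \cong J_{n-1}$ and, more importantly, that the chosen order of deletions in each case keeps the hypotheses of Remarks \ref{rem-path} and \ref{rem-triangle} (which are stated in terms of the \emph{current} graph rather than $J_n$) valid at every intermediate step. Once that structural setup is in place, each individual deletion is a direct invocation of the named remark, and no further graph-theoretic argument is required.
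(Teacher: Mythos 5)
Your proposal is correct and matches the paper's own proof essentially step for step: both add the three bypass edges via Remark \ref{rem-remove}(a), then in the case $a^i \notin S$ delete $a^i$ first and remove each of $b^i, c^i, d^i$ via Remark \ref{rem-triangle} or Remark \ref{rem-remove}(b), and in the weight-2 case delete $c^i, d^i$, then the pendant $a^i$ via Remark \ref{rem-path}, then $b^i$ via Remark \ref{rem-triangle}. The only differences are notational (your single $G^+$ versus the paper's intermediate graphs $G, G_2, G_3, G_4$).
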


\begin{proof}Consider first the situation when $a^i \not\in S$, and consider the graph $J_{n-1}$. One may think of $J_{n-1}$ as being constructed by starting with $J_n$ and then ``smoothing out" copy $J^i$, in the following sense. First, we add edges connecting $b^{i-1}$ to $b^{i+1}$, $c^{i-1}$ to $c^{i+1}$ and $d^{i-1}$ to $d^{i+1}$, and then we delete the vertex set $V(J^i)$. Suppose that we are midway through this process, having added all three edges, and having deleted $a^i$. Call this intermediate graph $G$, and note from Remark \ref{rem-remove} that $S$ is a connected dominating set for $G$. Then, note that in $G$, $b^i$ is a degree 2 vertex and is part of a triangle $b^{i-1}b^ib^{i+1}$. Now, suppose that we delete $b^i$ from $G$, to obtain a new intermediate graph $G_2$. If $b^i \not\in S$ then from Remark \ref{rem-remove} we can see that $S$ is a connected dominating set for $G_2$. If $b^i \in S$ then from Remark \ref{rem-triangle} we can see that $S \setminus \{b^i\}$ is a connected dominating set for $G_2$. Applying analogous arguments for $c^i$ and $d^i$, we obtain the result. The graphs $G$ and $G_2$ are displayed in Figure \ref{fig-connected-remove}

We next consider the situation when $J^i$ has weight 2 in $S$. If $a^i \not\in S$ then the previous paragraph applies. If $a^i \in S$ then there must be one more vertex from $J^i$ also in $S$; without loss of generality, suppose that $b^i \in S$. Again, we construct $J_{n-1}$ by adding edges to and deleting vertices from $J_n$. Suppose that we are midway through this process, having added all three edges, and having deleted $c^i$ and $d^i$. Call this intermediate graph $G_3$. From Remark \ref{rem-remove} we know that $S$ is a connected dominating set for $G_3$. Note that in $G_3$, $a^i$ is a degree 1 vertex. Remark \ref{rem-path} implies that we can obtain a second intermediate graph, $G_4$, by deleting $a^i$, and that $S \setminus \{a^i\}$ is a connected dominating set for $G_4$. Finally, note that in $G_4$, $b^i$ is degree 2 vertex and is part of a triangle $b^{i-1}b^ib^{i+1}$. Hence, Remark \ref{rem-triangle} implies the result. The graphs $G_3$ and $G_4$ are displayed in Figure \ref{fig-connected-remove}.\end{proof}

\begin{figure}[h!]\begin{center}\begin{tikzpicture}[smallgraph,scale=0.85]\node (a1) at (1,1) {}; \node (b1) at (1,0.5) {}; \node (c1) at (1,1.5) {}; \node (d1) at (1,2) {}; \node (b2) at (2,0.5) {}; \node (c2) at (2,1.5) {}; \node (d2) at (2,2) {}; \node (a3) at (3,1) {}; \node (b3) at (3,0.5) {}; \node (c3) at (3,1.5) {}; \node (d3) at (3,2) {}; \draw (b1) to (b3); \draw (c1) to (c3); \draw (d1) to (d3); \draw (b1) to (c1); \draw (b3) to (c3);
\draw[bend right=30] (b1) to (b3); \draw[bend left=30] (c1) to (c3); \draw[bend left=30] (d1) to (d3); \draw[bend right=90] (a1) to (d1); \draw[bend right=90] (a3) to (d3);
\draw[dotted] (b1) to (0.5,0.5); \draw[dotted] (c1) to (0.5,1.5); \draw[dotted] (d1) to (0.5,2); \draw[dotted] (b3) to (3.5,0.5); \draw[dotted] (c3) to (3.5,1.5); \draw[dotted] (d3) to (3.5,2); \node[white] [label=right:{$a$}] at (0,1) {};\node[white] [label=right:{$b$}] at (0,0.5) {};  \node[white] [label=right:{$c$}] at (0,1.5) {}; \node[white] [label=right:{$d$}] at (0,2) {};
\node[white] [label=above:{$G$}] at (2,-0.5) {};\end{tikzpicture} \;\;\;
\begin{tikzpicture}[smallgraph,scale=0.85]\node (a1) at (1,1) {}; \node (b1) at (1,0.5) {}; \node (c1) at (1,1.5) {}; \node (d1) at (1,2) {}; \node (c2) at (2,1.5) {}; \node (d2) at (2,2) {}; \node (a3) at (3,1) {}; \node (b3) at (3,0.5) {}; \node (c3) at (3,1.5) {}; \node (d3) at (3,2) {}; \draw (c1) to (c3); \draw (d1) to (d3); \draw (b1) to (c1); \draw (b3) to (c3);
\draw[bend right=30] (b1) to (b3); \draw[bend left=30] (c1) to (c3); \draw[bend left=30] (d1) to (d3); \draw[bend right=90] (a1) to (d1); \draw[bend right=90] (a3) to (d3);
\draw[dotted] (b1) to (0.5,0.5); \draw[dotted] (c1) to (0.5,1.5); \draw[dotted] (d1) to (0.5,2); \draw[dotted] (b3) to (3.5,0.5); \draw[dotted] (c3) to (3.5,1.5); \draw[dotted] (d3) to (3.5,2); \node[white] [label=right:{$a$}] at (0,1) {};\node[white] [label=right:{$b$}] at (0,0.5) {};  \node[white] [label=right:{$c$}] at (0,1.5) {}; \node[white] [label=right:{$d$}] at (0,2) {};
\node[white] [label=above:{$G_2$}] at (2,-0.5) {};\end{tikzpicture} \;\;\;
\begin{tikzpicture}[smallgraph,scale=0.85]\node (a1) at (1,1) {}; \node (b1) at (1,0.5) {}; \node (c1) at (1,1.5) {}; \node (d1) at (1,2) {}; \node (a2) at (2,1) {}; \node (b2) at (2,0.5) {}; \node (a3) at (3,1) {}; \node (b3) at (3,0.5) {}; \node (c3) at (3,1.5) {}; \node (d3) at (3,2) {}; \draw (b1) to (b3); \draw (b1) to (c1); \draw (b3) to (c3); \draw (a2) to (b2); \inS{a}{2} \inS{b}{2}
\draw[bend right=30] (b1) to (b3); \draw[bend left=30] (c1) to (c3); \draw[bend left=30] (d1) to (d3); \draw[bend right=90] (a1) to (d1); \draw[bend right=90] (a3) to (d3);
\draw[dotted] (b1) to (0.5,0.5); \draw[dotted] (c1) to (0.5,1.5); \draw[dotted] (d1) to (0.5,2); \draw[dotted] (b3) to (3.5,0.5); \draw[dotted] (c3) to (3.5,1.5); \draw[dotted] (d3) to (3.5,2); \node[white] [label=right:{$a$}] at (0,1) {};\node[white] [label=right:{$b$}] at (0,0.5) {};  \node[white] [label=right:{$c$}] at (0,1.5) {}; \node[white] [label=right:{$d$}] at (0,2) {};
\node[white] [label=above:{$G_3$}] at (2,-0.5) {};\end{tikzpicture} \;\;\;
\begin{tikzpicture}[smallgraph,scale=0.85]\node (a1) at (1,1) {}; \node (b1) at (1,0.5) {}; \node (c1) at (1,1.5) {}; \node (d1) at (1,2) {}; \node (b2) at (2,0.5) {}; \node (a3) at (3,1) {}; \node (b3) at (3,0.5) {}; \node (c3) at (3,1.5) {}; \node (d3) at (3,2) {}; \draw (b1) to (b3); \draw (b1) to (c1); \draw (b3) to (c3); \inS{b}{2}
\draw[bend right=30] (b1) to (b3); \draw[bend left=30] (c1) to (c3); \draw[bend left=30] (d1) to (d3); \draw[bend right=90] (a1) to (d1); \draw[bend right=90] (a3) to (d3);
\draw[dotted] (b1) to (0.5,0.5); \draw[dotted] (c1) to (0.5,1.5); \draw[dotted] (d1) to (0.5,2); \draw[dotted] (b3) to (3.5,0.5); \draw[dotted] (c3) to (3.5,1.5); \draw[dotted] (d3) to (3.5,2); \node[white] [label=right:{$a$}] at (0,1) {};\node[white] [label=right:{$b$}] at (0,0.5) {};  \node[white] [label=right:{$c$}] at (0,1.5) {}; \node[white] [label=right:{$d$}] at (0,2) {};
\node[white] [label=above:{$G_4$}] at (2,-0.5) {};\end{tikzpicture}
\caption{Sections of the four intermediate graphs $G$, $G_2$, $G_3$, $G_4$ from the proof of Theorem \ref{thm-connected-remove}.\label{fig-connected-remove}}\end{center}\end{figure}
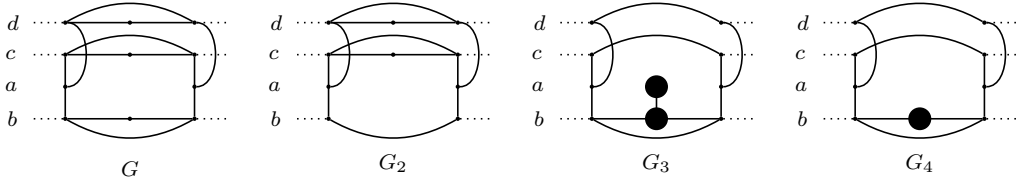

We also make use of the concept of ``smoothing out" a copy in the following lemma.

\begin{lemma}Consider the graph $J_n$ for $n \geq 4$, and let $S$ be a connected dominating set for $J_n$. Suppose that there is a copy $J^i$ with weight 4 in $S$. Then for any positive integer $k \leq n-3$, we have that $\overline{S} := S \setminus (V^S(J^{i+1}) \cup \hdots \cup V^S(J^{i+k}))$ is a connected dominating set for $J_{n-k}$.\label{lem-connected-4}\end{lemma}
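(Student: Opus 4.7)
My plan is to prove the lemma directly, by realizing $J_{n-k}$ as the graph obtained from $J_n$ by deleting the $4k$ vertices of copies $J^{i+1},\ldots,J^{i+k}$ and inserting three ``shortcut'' edges $b^ib^{i+k+1}$, $c^ic^{i+k+1}$, $d^id^{i+k+1}$ (swapping $c$ and $d$ at the junction if the twist would otherwise interfere, which the standing convention of the paper lets us avoid). The structural fact I exploit throughout is that, because $J^i$ has weight $4$ in $S$, all of $\{a^i,b^i,c^i,d^i\}$ lie in $\overline{S}$; they form a $K_{1,3}$-shaped ``hub'' that is internally connected through $a^i$, and whose three leaves $b^i,c^i,d^i$ are exactly the endpoints of the three shortcut edges. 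Thus the hub simultaneously touches each of the $b$-, $c$- and $d$-tracks of $J_{n-k}$.

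First I would check that $\overline{S}$ dominates $J_{n-k}$ by a brief case split on the location of a vertex $v$. If $v$ lies in a copy $J^j$ with $j\notin\{i,i+k+1\}$, then its neighborhood in $J_{n-k}$ equals its neighborhood in $J_n$, and no $S$-dominator of $v$ was removed, so any original dominator still lies in $\overline{S}$. If $v\in V(J^i)$ then $v\in\overline{S}$ trivially. If $v\in V(J^{i+k+1})$, then $a^{i+k+1}$ has only intra-copy neighbors (unaffected by the deletion), while each of $b^{i+k+1},c^{i+k+1},d^{i+k+1}$ gains a new neighbor in the hub through its shortcut edge; this replaces any lost dominator that formerly lay in $J^{i+k}$.

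Next I would verify that $\overline{S}$ induces a connected subgraph of $J_{n-k}$. For $u,v\in\overline{S}$, start with a $u$--$v$ path $P$ in $J_n[S]$, which exists because $S$ is connected. Each maximal sub-path of $P$ lying inside the removed region $V(J^{i+1})\cup\cdots\cup V(J^{i+k})$ must enter and leave through boundary vertices of $\overline{S}$ in $V(J^i)$ or in $\{b^{i+k+1},c^{i+k+1},d^{i+k+1}\}$, since $a^{i+k+1}$ has no neighbor in the removed region. By the previous observation, each such boundary vertex is adjacent in $J_{n-k}$ to the hub, so the sub-path can be replaced by a detour that enters the hub at one of $b^i,c^i,d^i$, crosses via $a^i$, and leaves at another. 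Splicing these detours back into $P$ yields a $u$--$v$ walk entirely inside $J_{n-k}[\overline{S}]$.

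The step I expect to require the most care is the connectivity rerouting, since $P$ may enter and exit the removed region several times and all these passes need to be handled uniformly. The saving grace is that the hub $V(J^i)$ is accessible from all three tracks at once, so a detour through $a^i$ is always available regardless of which pair of boundary vertices a given sub-path uses; this is exactly the feature that Theorem~\ref{thm-connected-remove} could not provide on its own, and it is why the hypothesis ``weight $4$'' rather than ``weight $2$'' lets us collapse arbitrarily many consecutive copies at once.
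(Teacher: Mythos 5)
Your proof is correct and follows essentially the same route as the paper: delete the $k$ intermediate copies, add the three shortcut edges to realize $J_{n-k}$, and use the fully occupied copy $J^i$ as a connected hub meeting all three tracks. The only difference is cosmetic — you verify connectivity by rerouting each excursion of an $S$-path through the hub, whereas the paper counts components of the induced subgraph and merges them via the shortcut edges; both arguments are sound.
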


\begin{proof}Suppose that we delete from $J_n$ all vertices from each of the copies $J^{i+1}, \hdots, J^{i+k}$. Call this intermediate graph $G$. Then, we add edges connecting $b^i$ to $b^{i+k+1}$, $c^i$ to $c^{i+k+1}$ and $d^i$ to $d^{i+k+1}$, and call the resulting graph $G_2$. Since we can always relabel the vertices so that the twist occurs elsewhere in the graph, it can then be seen that $G_2$ is isomorphic to $J_{n-k}$. Both $G$ and $G_2$ are displayed in Figure \ref{fig-connected-GG2}. Since $b^i \in S$, $c^i \in S$, and $d^i \in S$, it is clear that vertices $b^{i+k+1}$, $c^{i+k+1}$ and $d^{i+k+1}$ are dominated in $G_2$. Also, from Corollary \ref{cor-weight1} we know that $J^{i+k+1}$ has weight at least 1, and so $a^{i+k+1}$ is dominated. Hence, $\overline{S}$ is a dominating set for $G_2$. Next, we need to show that $\overline{S}$ is a connected dominating set for $G_2$. Recall the intermediate graph $G$. It is clear that the subgraph of $G$ induced by $\overline{S}$ is either connected, or has exactly two connected components. In the former case, the subgraph of $G_2$ induced by $\overline{S}$ is also connected. In the latter case, it is clear that there must be an edge $vw$ in $G_2$ such that $v \in J^i$, $w \in J^{i+k+1}$ and both $v,w$ are contained in $S$, and hence the subgraph of $G_2$ induced by $\overline{S}$ is connected. Either way, we obtain the desired result.\end{proof}

\begin{figure}[h!]\begin{center}\begin{tikzpicture}[smallgraph,scale=0.8] \node (a1) at (1,1) {}; \node (b1) at (1,0.5) {}; \node (c1) at (1,1.5) {}; \node (d1) at (1,2) {}; \node (a2) at (2,1) {}; \node (b2) at (2,0.5) {}; \node (c2) at (2,1.5) {}; \node (d2) at (2,2) {}; \node (a5) at (5,1) {}; \node (b5) at (5,0.5) {}; \node (c5) at (5,1.5) {}; \node (d5) at (5,2) {}; \node (a6) at (6,1) {}; \node (b6) at (6,0.5) {}; \node (c6) at (6,1.5) {}; \node (d6) at (6,2) {}; \draw (b1) to (c1); \draw (b2) to (c2); \draw (b5) to (c5); \draw (b6) to (c6); \draw (b1) to (b2); \draw (c1) to (c2); \draw (b5) to (b6); \draw (c5) to (c6); \draw (d1) to (d2); \draw (d5) to (d6); \inS{a}{2} \inS{b}{2} \inS{c}{2} \inS{d}{2}
\node[white] at (3,0) {};
\draw[bend right=90] (a1) to (d1);\draw[bend right=90] (a2) to (d2);\draw[bend right=90] (a5) to (d5);\draw[bend right=90] (a6) to (d6);
\draw[dotted] (b1) to (0.5,0.5); \draw[dotted] (c1) to (0.5,1.5); \draw[dotted] (d1) to (0.5,2); \draw[dotted] (b6) to (6.5,0.5); \draw[dotted] (c6) to (6.5,1.5); \draw[dotted] (d6) to (6.5,2); \node[white] [label=right:{$a$}] at (0,1) {};\node[white] [label=right:{$b$}] at (0,0.5) {};  \node[white] [label=right:{$c$}] at (0,1.5) {}; \node[white] [label=right:{$d$}] at (0,2) {};
\node[white] [label=above:{$J^i \;\;\;\;\;\;\;\;\; \hdots \;\;\;\;\;\;\;\;\; J^{i+k+1}$}] at (3.8,0.7) {};
\node[white] [label=above:{$G$}] at (3.5,-0.5) {};\end{tikzpicture} \hspace*{2.4cm}
\begin{tikzpicture}[smallgraph,scale=0.8] \node (a1) at (1,1) {}; \node (b1) at (1,0.5) {}; \node (c1) at (1,1.5) {}; \node (d1) at (1,2) {}; \node (a2) at (2,1) {}; \node (b2) at (2,0.5) {}; \node (c2) at (2,1.5) {}; \node (d2) at (2,2) {}; \node (a5) at (5,1) {}; \node (b5) at (5,0.5) {}; \node (c5) at (5,1.5) {}; \node (d5) at (5,2) {}; \node (a6) at (6,1) {}; \node (b6) at (6,0.5) {}; \node (c6) at (6,1.5) {}; \node (d6) at (6,2) {}; \draw (b1) to (c1); \draw (b2) to (c2); \draw (b5) to (c5); \draw (b6) to (c6); \draw (b1) to (b2); \draw (c1) to (c2); \draw (b5) to (b6); \draw (c5) to (c6); \draw (d1) to (d2); \draw (d5) to (d6); \draw[bend right=20] (b2) to (b5); \draw[bend left=20] (c2) to (c5); \draw[bend left=20] (d2) to (d5);
\inS{a}{2} \inS{b}{2} \inS{c}{2} \inS{d}{2}
\draw[bend right=90] (a1) to (d1);\draw[bend right=90] (a2) to (d2);\draw[bend right=90] (a5) to (d5);\draw[bend right=90] (a6) to (d6);
\draw[dotted] (b1) to (0.5,0.5); \draw[dotted] (c1) to (0.5,1.5); \draw[dotted] (d1) to (0.5,2); \draw[dotted] (b6) to (6.5,0.5); \draw[dotted] (c6) to (6.5,1.5); \draw[dotted] (d6) to (6.5,2); \node[white] [label=right:{$a$}] at (0,1) {};\node[white] [label=right:{$b$}] at (0,0.5) {};  \node[white] [label=right:{$c$}] at (0,1.5) {}; \node[white] [label=right:{$d$}] at (0,2) {};
\node[white] [label=above:{$J^i \;\;\;\;\;\;\;\;\; \hdots \;\;\;\;\;\;\;\;\; J^{i+k+1}$}] at (3.8,0.7) {};
\node[white] [label=above:{$G_2$}] at (3.5,-0.5) {};\end{tikzpicture}
\caption{Sections of the two intermediate graphs $G$ and $G_2$ from the proof of Lemma \ref{lem-connected-4}.\label{fig-connected-GG2}}\end{center}\end{figure}
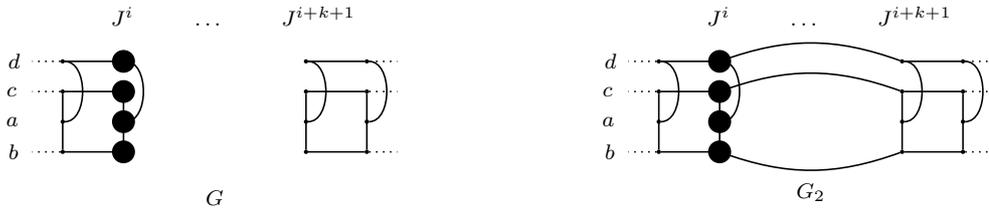

\begin{lemma}Suppose that there is an odd value $n \geq 5$ such that $\gamma_c(J_{n-1}) = 2n-2$. If $S$ is a connected dominating set for $J_n$ such that $|S| = 2n-1$, then $w^S_4$ is even.\label{lem-connected-odd}\end{lemma}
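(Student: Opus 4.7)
The plan is to use the counting of weights across the copies together with Theorem \ref{thm-connected-remove} to force a parity constraint.

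First I would use Corollary \ref{cor-weight1} to note that $w^S_0 = 0$, and then rule out weight-2 copies. Specifically, suppose some copy $J^i$ had weight $2$ in $S$. Then Theorem \ref{thm-connected-remove} applies and produces a connected dominating set $\overline{S}$ for $J_{n-1}$ with $|\overline{S}| = |S| - 2 = 2n - 3$. But by hypothesis $\gamma_c(J_{n-1}) = 2n - 2$, giving a contradiction. Hence $w^S_2 = 0$.

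Next, I would write down the two linear relations that $S$ must satisfy. Counting copies gives
\[ w^S_1 + w^S_3 + w^S_4 = n, \]
and summing weights gives
\[ w^S_1 + 3 w^S_3 + 4 w^S_4 = |S| = 2n - 1. \]
Subtracting the first from the second eliminates $w^S_1$ and yields
\[ 2 w^S_3 + 3 w^S_4 = n - 1. \]
Since $n$ is odd, $n - 1$ is even; the term $2 w^S_3$ is even, so $3 w^S_4$ must be even, which forces $w^S_4$ to be even, as required.

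The only nontrivial step is the elimination of weight-2 copies, and this is essentially immediate once Theorem \ref{thm-connected-remove} is available together with the assumed value $\gamma_c(J_{n-1}) = 2n-2$. Everything else is linear arithmetic exploiting the parity of $n$, so I expect no further obstacle.
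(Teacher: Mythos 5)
Your proposal is correct and follows essentially the same route as the paper's proof: eliminate weight-$2$ copies via Theorem \ref{thm-connected-remove} and the hypothesis $\gamma_c(J_{n-1}) = 2n-2$, then combine the two counting identities to get $3w^S_4 = n-1-2w^S_3$ and conclude by parity of $n$. No gaps.
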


\begin{proof}Suppose that $w^S_2 > 0$. Then there is a copy $J^i$ with weight 2. From Theorem \ref{thm-connected-remove}, we can obtain a connected dominating set for $J_{n-1}$ of cardinality $2n-3$, contradicting the initial assumption. Hence, $w^S_2 = 0$. Then, we have $w^S_1 + w^S_3 + w^S_4 = n$, and $w^S_1 + 3w^S_3 + 4w^S_4 = 2n-1$. Combining these two, we obtain $3w^S_4 = n-1-2w^S_3$. Since $n$ is odd, this implies that $w^S_4$ is even.\end{proof}

\begin{theorem}Consider the graph $J_n$ for $n \geq 3$, and suppose that $S$ is a connected dominating set. Then $S$ does not contain the patterns 111 or 1312131.\label{thm-connected-patterns}\end{theorem}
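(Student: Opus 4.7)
Both non-existence claims will be proved by contradiction, and both hinge on a single observation: if $J^i$ has weight $1$ in $S$, then $a^i\notin S$, since otherwise $a^i$'s only neighbours (all lying inside $J^i$) are absent from $S$ and $a^i$ is isolated in the subgraph induced by $S$, contradicting connectivity (as $|S|\geq n\geq 3$). Hence the unique vertex of $J^i\cap S$ is $x_i^i$ for some letter $x_i\in\{b,c,d\}$; throughout I also write $D^i:=\{x\in\{b,c,d\}:x^i\in S\}$.

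For the pattern 111 at copies $J^1,J^2,J^3$, I would take $x_2=b$ by symmetry. Since neither $J^1\cap S$ nor $J^3\cap S$ contains $a$, a brief enumeration of $x_1,x_3\in\{b,c,d\}$ shows that either one of $c^2,d^2$ is left undominated (its three possible dominators all being excluded), or both $b^1$ and $b^3$ are excluded from $S$; in the latter situation $b^2$'s only neighbours $a^2,b^1,b^3$ lie outside $S$, so $b^2$ is isolated in the subgraph induced by $S$, contradicting connectivity.

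For the pattern 1312131 at copies $J^1,\ldots,J^7$, I first establish two structural reductions. Since $b^2,c^2,d^2$ are pairwise non-adjacent inside $J^2$, if $a^2\notin S$ then each $w^2\in S$ requires $w\in\{x_1,x_3\}$ to avoid isolation, which is impossible for all three letters; hence $a^2\in S$, $|D^2|=2$, and symmetrically $a^6\in S$, $|D^6|=2$. Next, $D^4\subseteq\{x_3,x_5\}$: if $w^4\in S$ with $w\notin\{x_3,x_5\}$, then $w^3,w^5\notin S$, so $w^4$ has no $S$-neighbour outside $J^4$, and the resulting component of $S$ inside $J^4$ is cut off from the rest.

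The proof then splits into four cases. If $a^4\notin S$ and $D^4=\{x_3,x_5\}$ with $x_3\neq x_5$, then the third letter $w\in\{b,c,d\}\setminus\{x_3,x_5\}$ gives $w^4\notin S$ whose neighbours $a^4,w^3,w^5$ all lie outside $S$, so $w^4$ is undominated. The remaining three cases have $a^4\in S$ and $|D^4|=1$; write $\{x\}=D^4$. In each such case a domination argument on $J^3$ and $J^5$ together with the connectivity requirements on $x_3^3$ and $x_5^5$ forces both $D^2$ and $D^6$ to omit $x$, and I then verify directly that $\{a^4,x^4\}\cup\{x_j^j:j\in\{3,5\},\ x_j=x\}$ has no $S$-edge leaving $J^3\cup J^4\cup J^5$, producing an isolated connected component of $S$. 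The main obstacle is the bookkeeping in these three subcases, since pinning down $D^2$ and $D^6$ calls for distinguishing whether a letter is forced in to dominate a non-$S$ vertex of $J^3$ or $J^5$, or instead to prevent isolation of $x_3^3$ or $x_5^5$. The unifying idea is that the weight-$2$ copy $J^4$ is too narrow to serve simultaneously as a bridge between its two weight-$1$ neighbours and as the source of domination for the letter of $\{b,c,d\}$ absent from $D^4$.
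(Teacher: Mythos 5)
Your proposal is correct: I checked your two structural reductions ($a^2,a^6\in S$ with $|D^2|=|D^6|=2$, and $D^4\subseteq\{x_3,x_5\}$) and worked through the three remaining subcases ($x=x_3=x_5$, $x=x_3\neq x_5$, $x=x_5\neq x_3$), and in each one domination of the non-$S$ vertices of $J^3$ and $J^5$ plus the connectivity requirement on $x_3^3$, $x_5^5$ does force $D^2=D^6=\{b,c,d\}\setminus\{x\}$ up to the stated symmetry, trapping the component $\{a^4,x^4\}\cup\{x_j^j:x_j=x\}$ as you claim. The route differs from the paper's in two respects. For the pattern 111, the paper simply notes that a connected dominating set with at least two vertices is a total dominating set and invokes Theorem \ref{thm-total-patterns}, whereas you argue directly; both are fine. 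For the pattern 1312131, the paper also splits on whether $a^4\in S$, but in the case $a^4\in S$ it follows one long chain of forced memberships outward through $J^5$, $J^6$, $J^7$ and then back through $J^3$, $J^2$ to a weight contradiction at $J^1$; you instead front-load two symmetric lemmas and then localize the contradiction to an isolated component inside $J^3\cup J^4\cup J^5$. Your organization makes the underlying mechanism --- the weight-2 copy $J^4$ cannot simultaneously bridge its weight-1 neighbours and dominate the missing letter --- more transparent, and it avoids the paper's slight overreach of claiming all three of $b^1,c^1,d^1$ are forced when two already suffice; the cost is that the dependence on the outer copies $J^1,J^2,J^6,J^7$ is hidden inside the derivation of $|D^2|=|D^6|=2$, which does genuinely use the weight-3 hypothesis on $J^2,J^6$ and the weight-1 hypothesis on $J^1,J^7$, so the full seven-copy pattern is still essential.
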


\begin{proof}Any connected dominating set with $|S| \geq 2$ is also a total dominating set, and hence the result for pattern 111 follows immediately from Theorem \ref{thm-total-patterns}.

Next, consider the case when $S$ contains the pattern 1312131. Without loss of generality, suppose the copies meeting this pattern are $J^1, \hdots, J^7$. Suppose that $a^4 \not\in S$. Then, without loss of generality, suppose that $b^4 \in S$, $c^4 \in S$, and $d^4 \not\in S$. This situation is displayed in part (a) of Figure \ref{fig-connected-patterns}. Since $S$ is dominating, it must contain at least one of $d^3$ and $d^5$. Since both $J^3$ and $J^5$ have weight 1, both options are equivalent, so without loss of generality we will assume that $d^5 \in S$. Then $a^5 \not \in S$, $b^5 \not\in S$, and $c^5 \not\in S$. In order for $S$ to be connected, we must have $b^3 \in S$ and $c^3 \in S$, but this is impossible since $J^3$ has weight 1. Hence, it must be the case that $a^4 \in S$.

Again, without loss of generality, suppose that $b^4 \in S$, $c^4 \not\in S$ and $d^4 \not\in S$. This situation is displayed in part (b) of Figure \ref{fig-connected-patterns}. Since $S$ is connected, it must contain at least one $b^3$ or $b^5$. As in the previous paragraph, without loss of generality we will assume that $b^5 \in S$. Then, in order for $S$ to be dominating, we must have $c^6 \in S$ and $d^6 \in S$. Now, suppose that $b^6 \in S$. Then, since $S$ is connected, it must contain both $c^7$ and $d^7$, but this is impossible since $J^7$ has weight 1. Hence, it must be the case that $b^6 \not\in S$, and hence $a^6 \in S$. Then, since $S$ is connected, we must have $b^3 \in S$ and $b^2 \in S$. In order for $S$ to be dominating, we must have $c^2 \in S$ and $d^2 \in S$, and since $J^2$ has weight 3 this implies that $a^2 \not\in S$.  Finally, to ensure $S$ is connected, it must contain each of $b^1$, $c^1$ and $d^1$, but this is impossible since $J^1$ has weight 1.\end{proof}

\begin{figure}[h!]\begin{center}\begin{tikzpicture}[smallgraph]\flower{3} \lab{1}{$J^3$} \lab{2}{$J^4$} \lab{3}{$J^5$} \patt{1}{1} \patt{2}{2} \patt{3}{1} \inS{b}{2} \inS{c}{2} \notS{a}{2} \notS{d}{2} \inSa{b}{1} \inSa{c}{1} \inSa{d}{3} \notSa{a}{3} \notSa{b}{3} \notSa{c}{3} \node[white] [label=right:{(a)}] at (4,1.25) {}; \end{tikzpicture} \hfill \begin{tikzpicture}[smallgraph]\flower{7} \genlab{7} \patt{1}{1} \patt{2}{3} \patt{3}{1} \patt{4}{2} \patt{5}{1} \patt{6}{3} \patt{7}{1} \inS{a}{4} \inS{b}{4} \notS{c}{4} \notS{d}{4} \inSa{b}{1} \inSa{c}{1} \inSa{d}{1} \inSa{b}{2} \inSa{c}{2} \inSa{d}{2} \inSa{b}{3} \inSa{b}{5} \inSa{a}{6} \inSa{c}{6} \inSa{d}{6} \notSa{a}{1} \notSa{a}{2} \notSa{a}{3} \notSa{c}{3} \notSa{d}{3} \notSa{a}{5} \notSa{c}{5} \notSa{d}{5} \notSa{b}{6} \node[white] [label=right:{(b)}] at (8,1.25) {}; \end{tikzpicture}
\caption{The two situations described in the proof of Theorem \ref{thm-connected-patterns}. In part (a), $J^3$ has too many vertices in $S$. In part (b), $J^1$ has too many vertices in $S$.\label{fig-connected-patterns}}\end{center}\end{figure}
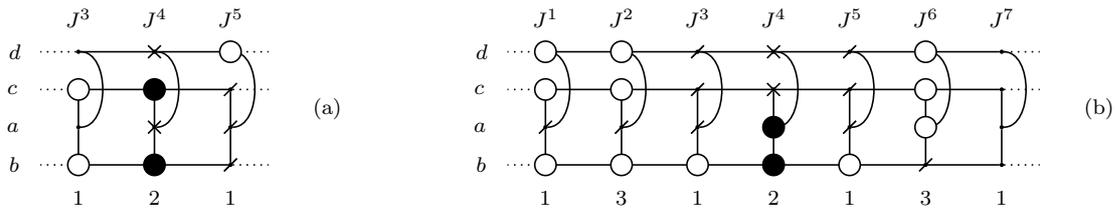

\begin{theorem}Consider the graph $J_n$ for even $n \geq 6$, and a connected dominating set $S$ for $J_n$ such that $|S| = 2n-1$. If $\gamma_c(J_{n-1}) = 2n-3$, and $w^S_4 = 0$, then $S$ does not contain the pattern 112.\label{thm-connected-112}\end{theorem}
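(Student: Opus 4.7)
The plan is to assume, for contradiction, that $S$ contains the pattern 112 on consecutive copies $J^{i-1}, J^i, J^{i+1}$, and to derive a contradiction in three stages. First, I would apply Theorem \ref{thm-connected-remove} to the weight-2 copy $J^{i+1}$, producing $S' = S \setminus V^S(J^{i+1})$, a connected dominating set for $J_{n-1}$ of size $2n-3 = \gamma_c(J_{n-1})$; hence $S'$ is a minimum connected dominating set. Since $n-1 \geq 5$ is odd, and since by induction on the formula $\gamma_c(J_{n-2}) = 2(n-2) = 2n-4$, Lemma \ref{lem-connected-odd} applies to $S'$. Following the opening step of its proof, if $w^{S'}_2 > 0$ then Theorem \ref{thm-connected-remove} would yield a CDS for $J_{n-2}$ of size $2n-5 < 2n-4$, a contradiction. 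Hence $w^{S'}_2 = 0$, and since $S'$ was obtained by removing the unique weight-2 copy $J^{i+1}$, we conclude $w^S_2 = 1$. Combined with $w^S_4 = 0$ and $|S| = 2n-1$, this forces $w^S_1 = n/2$ and $w^S_3 = (n-2)/2$.

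Second, I would sharpen the structure around the pattern. If a weight-3 copy $J^j$ had $a^j \not\in S$, Theorem \ref{thm-connected-remove} would again apply and yield a CDS for $J_{n-1}$ of size $2n-4 < 2n-3$, a contradiction; hence every weight-3 copy of $S$ contains its central vertex. Next, since $S$ is also a total dominating set (as $|S| \geq 2$), Theorem \ref{thm-total-patterns} forbids the patterns 111 and 1121, so neither $J^{i-2}$ nor $J^{i+2}$ can have weight 1; together with $w^S_2 = 1$ (realized by $J^{i+1}$) and $w^S_4 = 0$, both $J^{i-2}$ and $J^{i+2}$ must have weight exactly 3, with $a^{i-2}, a^{i+2} \in S$. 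The only remaining choice is which vertex each of $J^{i-1}$ and $J^i$ contributes; write $J^{i-1} \cap S = \{x^{i-1}\}$ and $J^i \cap S = \{y^i\}$ for some letters $x, y \in \{b, c, d\}$.

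Third, I would complete the argument by case analysis on $x,y$, using the $c \leftrightarrow d$ symmetry to reduce the number of sub-cases. In each sub-case, domination of the non-$a$ vertices of $J^{i-1}$ and $J^i$ forces specific vertices into $J^{i-2} \cap S$ and $J^{i+1} \cap S$, and then connectivity of the two non-central vertices of $J^{i+1}$ forces the remaining vertices of $J^{i+2} \cap S$. When $x = y$, the letter $x$ turns out to be absent from both $J^{i-2} \cap S$ and $J^{i+1} \cap S$, so the edge $x^{i-1} x^i$ is an isolated connected component of the induced subgraph $G[S]$, disjoint from the remaining $2n-3 \geq 9$ vertices of $S$. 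When $x \neq y$, the analogous computation shows that the edge $y^i y^{i+1}$ is isolated instead (as $y$ is forced out of $J^{i-2} \cap S$ and $J^{i+2} \cap S$). Either outcome contradicts the connectivity of $S$, completing the proof. The main obstacle will be the case analysis itself: the underlying mechanism is uniform (the shared letter becomes a dead-end along its cycle), but correctly tracking which of the letters $b,c,d$ is omitted in each of the five consecutive copies requires methodical bookkeeping.
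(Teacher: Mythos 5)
Your overall strategy is viable and genuinely different from the paper's, but Stage 1 has a real gap: you invoke $\gamma_c(J_{n-2}) = 2n-4$ ``by induction on the formula,'' yet this is not among the hypotheses of the theorem and is not derivable from them. The stated assumptions ($|S|=2n-1$, $w^S_4=0$, $\gamma_c(J_{n-1})=2n-3$) only yield $w^S_2 + 2w^S_3 = n-1$, so $w^S_2$ is odd but could a priori be $3, 5, \dots$; ruling out a second weight-2 copy requires exactly the lower bound on $\gamma_c(J_{n-2})$ that you assume. This matters downstream: without $w^S_2=1$, your Stage 2 conclusion that $J^{i-2}$ and $J^{i+2}$ have weight exactly 3 fails (for instance $J^{i+2}$ could have weight 2, giving the pattern 1122, which your case analysis does not treat). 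The theorem was phrased so as to be provable from its own hypotheses, and the paper's proof never leaves $J_{n-1}$: it argues directly on the five copies $J^1,\dots,J^5$ surrounding the pattern. After fixing $b^2\in S$, either $b^3\in S$ or (WLOG) $c^3\in S$, and in each case domination and connectivity force a neighbouring copy ($J^1$ in the first case, $J^5$ in the second) to contain $b$, $c$ and $d$ but not $a$; Theorem~\ref{thm-connected-remove} then produces a connected dominating set of $J_{n-1}$ of cardinality $2n-4$, contradicting $\gamma_c(J_{n-1})=2n-3$.

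That said, your Stages 2 and 3 are sound as far as they go: the observation that every weight-3 copy must contain its central vertex, the use of Theorem~\ref{thm-total-patterns} to exclude weight 1 on $J^{i-2}$ and $J^{i+2}$, and the ``isolated edge'' mechanism all check out (in the $x=y$ case the edge $x^{i-1}x^i$ is cut off immediately; in the $x\neq y$ case domination of $b^{i+1}$ and connectivity of $d^{i+1}$ pin down $J^{i+2}\cap S$ and isolate $y^iy^{i+1}$). If you either add $\gamma_c(J_{n-2})=2n-4$ to the hypotheses (it is available at the one point in the induction where the theorem is applied) or replace Stage 1 with the paper's direct local argument, your proof goes through.
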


\begin{proof}Without loss of generality, suppose that one set of copies meeting the pattern 112 is $J^2, J^3, J^4$, and consider also $J^1$ and $J^5$. Since $J^2$ and $J^3$ have weight 1, we know that $a^2 \not\in S$ and $a^3 \not\in S$. Without loss of generality, suppose that $b^2 \in S$. Then, suppose that $b^3 \in S$ as well. This situation is displayed in part (a) of Figure \ref{fig-connected-112}. Since $S$ is dominating, it must also contain each of $c^1$, $d^1$, $c^4$, and $d^4$. Since $J^4$ has weight 2, we have $a^4 \not\in S$ and $b^4 \not\in S$. Since $S$ is connected, it implies that $b^1 \in S$. Since there are no copies of weight 4 in $S$, this implies that $a^1 \not\in S$. However, by Theorem \ref{thm-connected-remove}, we can then obtain a connected dominating set for $J_{n-1}$ of cardinality $2n-4$, contradicting the assumption that $\gamma_c(J_{n-1}) = 2n-3$. Hence, the assumption that $b^3 \in S$ must be false.

We instead have $b^2 \in S$ and $b^3 \not\in S$. Without loss of generality, suppose that $c^3 \in S$. This situation is displayed in part (b) of Figure \ref{fig-connected-112}. Since $S$ is dominating, it must also contain $d^4$. Since $S$ is connected, we have $c^4 \in S$, and since $J^4$ has weight 2, we have $a^4 \not\in S$ and $b^4 \not\in S$. Then, since $S$ is dominating, we have $b^5 \in S$, and since $S$ is connected we have $c^5 \in S$ and $d^5 \in S$. Since there are no copies of weight 4 in $S$, it implies that $a^5 \not\in S$. However, by Theorem \ref{thm-connected-remove}, we can the obtain a connected dominating set for $J_{n-1}$ of cardinality $2n-4$, which again contradicts the assumption that $\gamma_c(J_{n-1}) = 2n-3$, completing the proof.\end{proof}

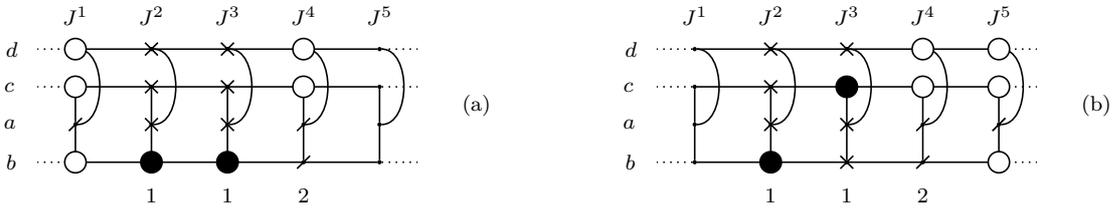
\begin{figure}[h!]\begin{center}\begin{tikzpicture}[smallgraph]\flower{5} \genlab{5} \patt{2}{1} \patt{3}{1} \patt{4}{2} \inS{b}{2} \inS{b}{3} \notS{a}{2} \notS{c}{2} \notS{d}{2} \notS{a}{3} \notS{c}{3} \notS{d}{3} \inSa{b}{1} \inSa{c}{1} \inSa{d}{1} \inSa{c}{4} \inSa{d}{4} \notSa{a}{1} \notSa{a}{4} \notSa{b}{4} \node[white] [label=right:{(a)}] at (6,1.25) {}; \end{tikzpicture} \hfill \begin{tikzpicture}[smallgraph]\flower{5} \genlab{5} \patt{2}{1} \patt{3}{1} \patt{4}{2} \inS{b}{2} \inS{c}{3} \notS{a}{2} \notS{c}{2} \notS{d}{2} \notS{a}{3} \notS{b}{3} \notS{d}{3} \inSa{c}{4} \inSa{d}{4} \inSa{b}{5} \inSa{c}{5} \inSa{d}{5} \notSa{a}{4} \notSa{b}{4} \notSa{a}{5} \node[white] [label=right:{(b)}] at (6,1.25) {}; \end{tikzpicture}
\caption{The two situations described in the proof of Theorem \ref{thm-connected-112}. In both parts, there is a copy with weight 3, but with the $a$ vertex not contained in $S$.\label{fig-connected-112}}\end{center}\end{figure}

In the next theorem, we will require the following concept. Suppose that we have a connected dominating set $S$ for $J_n$. Denote by $J_n(S)$ the subgraph of $J_n$ induced by $S$. By definition, $J_n(S)$ is connected. Then, consider a set of consecutive copies $J^i, \hdots, J^{i+k}$, and define $\overline{S} = S \cap (V(J^i) \cup \hdots \cup V(J^{i+k}))$. If $J_n(\overline{S})$ is a disconnected graph, then we say that the copies $J^i, \hdots, J^{i+k}$ are {\em locally disconnected} in $S$. Since $S$ itself is connected, it is clear that if $S$ contain two sets of locally disconnected consecutive copies, they must overlap by two or more copies.


\begin{theorem}Consider the graph $J_n$ for even $n \geq 6$, and a connected dominating set $S$ for $J_n$ such that $|S| = 2n-1$. If $\gamma_c(J_{n-1}) = 2n-3$, then any set of consecutive copies meeting the patterns 113 or 3123 are locally disconnected.\label{thm-connected-local}\end{theorem}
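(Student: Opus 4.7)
My plan is direct structural case analysis on the few possible assignments of $S$-vertices inside the window, exploiting three features of $J_n$: (a) in any weight-1 copy $J^i$ we must have $a^i \notin S$, since otherwise $a^i$ has no $S$-neighbour, contradicting the fact that a connected dominating set with $|S|\ge 2$ is total dominating; (b) within a copy $K_{1,3}$ the three leaves $b^i,c^i,d^i$ are pairwise non-adjacent and communicate inside $J^i$ only through $a^i$; (c) between consecutive copies, edges run only along one of the three strands. Thus whenever a weight-1 copy sits between larger copies, the only local glue across it passes through the single strand containing its unique $S$-vertex.

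For pattern 113 on copies $J^1, J^2, J^3$, let $v_1, v_2$ denote the unique $S$-vertices of $J^1$ and $J^2$. By strand symmetry I can take $v_1 = b^1$. If $v_2 \neq b^2$, then $v_1$ has no in-window $S$-neighbour ($a^1, b^2 \notin S$), so $v_1$ is locally isolated. If $v_2 = b^2$, then since $a^2, c^1, d^1 \notin S$, domination of $c^2$ and $d^2$ forces $c^3, d^3 \in V^S(J^3)$. The remaining slot of the weight-3 copy $J^3$ is filled by either $a^3$ or $b^3$: the former leaves $\{b^1, b^2\}$ separated from the internally connected $\{a^3, c^3, d^3\}$, while the latter leaves $c^3$ and $d^3$ as locally isolated leaves. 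Either way the local induced subgraph is disconnected.

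For pattern 3123 on copies $J^1, \ldots, J^4$, again take $v_2 = b^2$. Since $b^2$'s only in-window candidate neighbours are $b^1$ and $b^3$, if either is missing the local subgraph splits into a component containing $v_2$ and one flanking copy, and another containing the remaining two. So $b^1, b^3 \in S$, and I split on the second vertex of the weight-2 copy $J^3$. If $a^3 \in V^S(J^3)$, then $c^3, d^3 \notin S$, so domination of $c^2$ and $d^2$ forces both $c^1$ and $d^1$ into $V^S(J^1)$; together with $b^1$ this exhausts the weight-3 budget and ejects $a^1$, so $c^1$ and $d^1$ become locally isolated. If $c^3 \in V^S(J^3)$ (the $d^3$ case is symmetric by the $c \leftrightarrow d$ swap), then dominating $d^2$ forces $d^1 \in S$ and dominating the undominated $d^3$ forces $d^4 \in S$; the remaining slots of $V^S(J^1)$ and $V^S(J^4)$ lie in $\{a^1, c^1\}$ and $\{a^4, b^4, c^4\}$. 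A short enumeration of the resulting six subcases shows that in each one either a leaf is locally isolated (e.g.\ $c^1$ when $a^1 \notin V^S(J^1)$, $d^4$ when $V^S(J^4) = \{b^4, c^4, d^4\}$, or $c^3$ when $c^4 \notin V^S(J^4)$), or, in the only remaining subcase $V^S(J^1) = \{a^1, b^1, d^1\}, V^S(J^4) = \{a^4, c^4, d^4\}$, the local subgraph splits into the $b$-strand component $\{a^1, b^1, d^1, b^2, b^3\}$ and the $a^4$-centred component $\{c^3, a^4, c^4, d^4\}$, since $b^3$ is not adjacent to $c^3$ and $b^4 \notin S$.

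The main obstacle is organising this modest case tree cleanly so that the domination constraints on $c^2, d^2$ (and $c^3, d^3$ in the 3123 case), combined with the weight budgets of $J^1, J^3, J^4$, collapse each branch quickly. The unifying observation is the ``strand bottleneck'' at the weight-1 copy $J^2$: only one strand can carry local connectivity across $J^2$, which forces every surviving configuration into one where either a centre $a^i$ of a weight-3 copy is ejected, producing isolated leaves, or the second vertex of $V^S(J^3)$ lies off the $b$-strand, producing a two-component split along the strand.
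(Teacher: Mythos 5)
Your proof is correct, but it takes a genuinely different route from the paper's. The paper leans on the hypothesis $\gamma_c(J_{n-1}) = 2n-3$: via Theorem \ref{thm-connected-remove}, any weight-3 copy in the window must have its $a$-vertex in $S$ (otherwise the copy could be smoothed out to produce a too-small connected dominating set for $J_{n-1}$), and this single fact collapses the case analysis to one or two forced configurations per pattern. You instead run a purely local, exhaustive analysis: the ``strand bottleneck'' at each weight-1 copy forces the unique $S$-vertex of that copy onto a single strand, domination of the starved $c$- and $d$-vertices pins down most of the neighbouring weight-3 copies, and the handful of residual choices (the third vertex of $J^3$ in the 113 case; the six $V^S(J^1)\times V^S(J^4)$ subcases in the 3123 case) each yield either an isolated leaf or a two-component split. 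I checked the enumeration and it is complete; in particular the final 3123 subcase $V^S(J^1)=\{a^1,b^1,d^1\}$, $V^S(J^4)=\{a^4,c^4,d^4\}$ does split as you claim because $b^3$ and $c^3$ communicate only through $a^3\notin S$. What your approach buys is a strictly stronger statement --- local disconnection of these patterns holds for \emph{any} connected dominating set of $J_n$, with no appeal to $|S|=2n-1$, to the parity of $n$, or to the inductive hypothesis on $\gamma_c(J_{n-1})$ --- at the cost of a longer case tree; what the paper's approach buys is brevity, at the cost of entangling this lemma with the induction in the main theorem. Your observation that $a^i\notin S$ for any weight-1 copy (since a connected dominating set with $|S|\ge 2$ is total dominating) is sound and does the work that the paper's Theorem \ref{thm-connected-remove} invocation does for the weight-3 copies.
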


\begin{proof}Consider first the pattern 113. Without loss of generality, suppose that one set of copies meeting this pattern is $J^1, J^2, J^3$, and that this set of copies is not locally disconnected. Suppose that $a^3 \not\in S$. Then, from Theorem \ref{thm-connected-remove} there exists a connected dominating set for $J_{n-1}$ with cardinality $2n-4$, which is impossible since by assumption $\gamma_c(J_{n-1}) = 2n-3$. Hence, $a^3 \in S$. Then, without loss of generality, suppose that $b^3 \in S$, $c^3 \in S$, and $d^3 \not\in S$. This situation is displayed in part (a) of Figure \ref{fig-connected-local}. Since $J^2$ has weight 1, it is clear that $a^2 \not\in S$. Then, in order for $S$ to be dominating, it must contain at least one of $d^1$ and $d^2$. However, if $d^2 \in S$, then $S$ does not contain $b^2$ or $c^2$, and hence $J^1, J^2, J^3$ are locally disconnected, contradicting the initial assumption. Hence, $d^2 \not\in S$, and so $d^1 \in S$. But then, because $J^1$ has weight 1, $S$ does not contain $a^1$, $b^1$, or $c^1$, which again implies that $J^1, J^2, J^3$ are locally disconnected, contradicting the initial assumption. Hence, the initial assumption must be false, and $J^1, J^2, J^3$ are locally disconnected.

Next, consider the pattern 3123. Without loss of generality, suppose that one set of copies meeting this pattern is $J^1, J^2, J^3, J^4$, and that this set of copies if not locally disconnected. Using an identical argument as from the previous paragraph, we must have $a^1 \in S$ and $a^4 \in S$. Then, without loss of generality, suppose that $b^1 \in S$ and $c^1 \in S$. This situation is displayed in part (b) of Figure \ref{fig-connected-local}. Since $J^2$ has weight 1, and the set of copies is not locally disconnected, we have $a^2 \not\in S$ and $d^2 \not\in S$. The remaining two choices are equivalent; without loss of generality, suppose that $b^2 \in S$. Then, since $S$ is dominating, and the set of copies is not locally disconnected, we must have $b^3 \in S$ and $d^3 \in S$. Finally, since $S$ is dominating, and the set of copies is not locally disconnected, we must have $b^4 \in S$, $c^4 \in S$ and $d^4 \in S$. However, this is impossible since $J^4$ has weight 3. Hence, the initial assumption must be false, and $J^1, J^2, J^3, J^4$ are locally disconnected.\end{proof} 

\begin{figure}[h!]\begin{center}\begin{tikzpicture}[smallgraph]\flower{3} \genlab{3} \patt{1}{1} \patt{2}{1} \patt{3}{3} \inS{a}{3} \inS{b}{3} \inS{c}{3} \notS{d}{3} \notS{a}{2} \inSa{d}{1} \notSa{a}{1} \notSa{b}{1} \notSa{c}{1} \notSa{d}{2} \node[white] [label=right:{(a)}] at (4,1.25) {}; \end{tikzpicture} \hspace*{2cm} \begin{tikzpicture}[smallgraph]\flower{4} \genlab{4} \patt{1}{3} \patt{2}{1} \patt{3}{2} \patt{4}{3} \inS{a}{1} \inS{b}{1} \inS{c}{1} \inS{a}{4} \notS{d}{1} \inSa{b}{2} \inSa{b}{3} \inSa{d}{3} \inSa{b}{4} \inSa{c}{4} \inSa{d}{4} \notSa{a}{2} \notSa{c}{2} \notSa{d}{2} \notSa{a}{3} \notSa{c}{3} \node[white] [label=right:{(b)}] at (5,1.25) {}; \end{tikzpicture}
\caption{The two situations described in the proof of Theorem \ref{thm-connected-local}. In part (a), the copies are locally disconnected. In part (b), copy $J^4$ has too many vertices in $S$.\label{fig-connected-local}}\end{center}\end{figure}
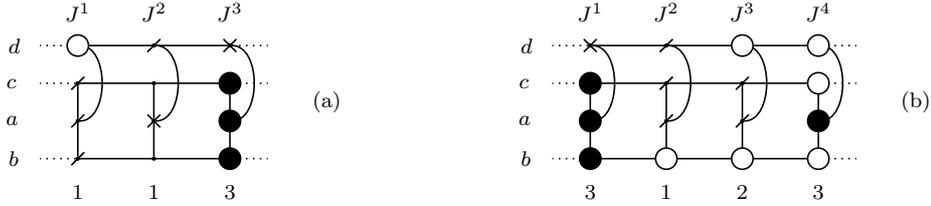

Theorem \ref{thm-connected-patterns}, along with the fact that the patterns 113 and 3123 overlap by at most one copy, leads to the following Corollary.

\begin{corollary}Consider the graph $J_n$ for even $n \geq 6$, and a connected dominating set $S$ for $J_n$ such that $|S| = 2n-1$. If $\gamma_c(J_{n-1}) = 2n-3$, then $S$ contains at most one instance the patterns 113 or 3123.\label{cor-connected-local}\end{corollary}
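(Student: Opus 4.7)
The plan is to combine Theorem \ref{thm-connected-local} with the general principle, noted just before that theorem, that two distinct sets of locally disconnected consecutive copies inside a connected dominating set must overlap in at least two copies. Once we verify that any two instances of patterns $113$ or $3123$ can share at most one copy, the conclusion follows at once.

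First, under the hypotheses of the corollary, Theorem \ref{thm-connected-local} tells us that any set of consecutive copies meeting the pattern $113$ or the pattern $3123$ is locally disconnected in $S$. Hence if $S$ contained two distinct instances drawn from either of these two patterns, we would obtain two sets of locally disconnected consecutive copies; by the cyclic structure of $J_n$ and the global connectivity of $S$, these two sets would have to overlap in two or more copies.

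The remaining work is a short weight-agreement check. For two instances of $113$, agreement on any two adjacent copies forces a weight $1$ to equal a weight $3$; for two instances of $3123$, a two-copy overlap forces a $2$ to equal a $3$, and a three-copy overlap forces a $1$ to equal a $3$. For a $113$ aligned with a $3123$, every possible relative shift sharing two or more copies yields a contradictory weight equation (for example, $1 \neq 2$ or $1 \neq 3$). This enumeration of overlap cases, though entirely routine, is the only real obstacle. Once it is completed, two distinct instances would produce two locally disconnected segments overlapping in at most one copy, contradicting the global connectivity of $S$. Therefore $S$ contains at most one instance of the patterns $113$ or $3123$.
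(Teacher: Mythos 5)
Your proposal is correct and follows essentially the same route as the paper, which likewise obtains the corollary by combining Theorem~\ref{thm-connected-local} with the observation that two locally disconnected sets of consecutive copies must overlap in at least two copies, plus the (merely asserted) fact that two instances of the patterns 113 or 3123 can overlap in at most one copy. The only caveat --- shared with the paper, which does not spell out the overlap check at all --- is that the enumeration should also account for reversed occurrences (e.g.\ a 311 and a 113 sharing their two weight-1 copies inside a 3113 block overlap in two copies with no weight clash), though this does not affect how the corollary is subsequently applied.
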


We are finally ready to prove the main result of this section.

\begin{theorem}Consider the graph $J_n$ for $n \geq 3$. Then,

$$\gamma_c(J_n) = \left\{\begin{aligned}2n\;\;\;, & \mbox{ if } n\mbox{ is even,}\\2n-1, & \mbox{ if } n\mbox{ is odd.}\end{aligned}\right.\label{thm-connected}$$\end{theorem}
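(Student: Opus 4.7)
Plan: The upper bound is established in Section \ref{sec-upper}, so it suffices to prove the matching lower bound. I proceed by strong induction on $n$, using the standard MILP formulations for connected domination to handle the small base cases $n = 3, 4, 5$, and splitting the inductive step according to the parity of $n$, since the value $\gamma_c(J_{n-1})$ supplied by the inductive hypothesis differs in the two cases.

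For odd $n \geq 7$, the inductive hypothesis gives $\gamma_c(J_{n-1}) = 2n-2$. Suppose, towards a contradiction, that $S$ is a connected dominating set for $J_n$ with $|S| \leq 2n-2$. By Theorem \ref{thm-connected-remove}, any copy $J^i$ with $a^i \not\in S$ or with weight 2 would yield a connected dominating set for $J_{n-1}$ of size at most $|S|-1 \leq 2n-3$, contradicting the inductive hypothesis. Hence every copy has $a^i \in S$ and weight in $\{1,3,4\}$. But a weight-1 copy with $a^i \in S$ forces $a^i$, whose only neighbors are $b^i, c^i, d^i$, to be isolated in the subgraph induced by $S$, contradicting connectivity. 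Therefore every copy has weight at least 3, yielding $|S| \geq 3n$, which is impossible.

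For even $n \geq 6$ the inductive hypothesis gives $\gamma_c(J_{n-1}) = 2n-3$. Assume $|S| \leq 2n-1$ and split into the two subcases $|S| \leq 2n-2$ and $|S| = 2n-1$. In the first subcase, the analogous reductions via Theorem \ref{thm-connected-remove}, together with the same isolation argument, restrict every copy to weight 1 with $a^i \not\in S$, weight 3 with $a^i \in S$, or weight 4. Writing $w^S_j$ for the number of copies of weight $j$, the relations $w^S_1 + 3w^S_3 + 4w^S_4 \leq 2n-2$ and $w^S_1 + w^S_3 + w^S_4 = n$ give $2w^S_3 + 3w^S_4 \leq n-2$, which combined with the bound $w^S_3 + w^S_4 \geq \lceil n/3 \rceil$ coming from the forbidden pattern 111 of Theorem \ref{thm-connected-patterns}, and with Lemma \ref{lem-connected-4} whenever $w^S_4 > 0$, leaves only a handful of admissible cyclic weight sequences which can each be eliminated by inspecting the forced vertex configurations.

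The main obstacle is the remaining subcase $|S| = 2n-1$. Here I invoke Theorem \ref{thm-connected-112}, Theorem \ref{thm-connected-local}, and Corollary \ref{cor-connected-local}, which impose strong structural constraints on $S$: when $w^S_4 = 0$ no pattern 112 can occur, every occurrence of pattern 113 or 3123 must be locally disconnected, and at most one such occurrence exists overall. Together with the forbidden patterns 111 and 1312131 of Theorem \ref{thm-connected-patterns}, and with Lemma \ref{lem-connected-4} when $w^S_4 > 0$, I enumerate the possible cyclic weight sequences of total weight $2n-1$ for even $n$ and argue by case analysis that none can be realized by a connected dominating set, producing the final contradiction.
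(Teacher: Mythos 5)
Your odd-$n$ inductive step is correct and is essentially the paper's argument. The even-$n$ step, however, is where virtually all of the difficulty lies, and your proposal does not carry it out. The first problem is the subcase $|S|\le 2n-2$: the constraints $2w^S_3+3w^S_4\le n-2$ and $w^S_3+w^S_4\ge\lceil n/3\rceil$ do \emph{not} leave ``a handful'' of admissible cyclic weight sequences --- for large even $n$ (take $w^S_4=0$, $w^S_3=(n-2)/2$, $w^S_1=(n+2)/2$) the number of arrangements avoiding 111 grows without bound, so ``eliminate each by inspecting the forced vertex configurations'' is not a finite argument. This subcase should instead be removed at the outset: a connected dominating set can always be enlarged by one adjacent vertex while remaining connected dominating, so one may assume $|S|=2n-1$ exactly, which is what the paper implicitly does.

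The second and more serious problem is that your treatment of $|S|=2n-1$ is only a list of the lemmas to be invoked; the argument that makes the enumeration finite is missing. Concretely, you never establish (i) that $w^S_2\le 1$ (two weight-2 copies give, via two applications of Theorem \ref{thm-connected-remove}, a connected dominating set for $J_{n-2}$ of size $2n-5$, contradicting the hypothesis for $n-2$); (ii) the counting identities $2w^S_1=n+1+w^S_4$ (when $w^S_2=0$) and $2w^S_1=n+w^S_4$ (when $w^S_2=1$), which fix the parity of $w^S_4$ and show that the pattern 11 occurs at least $w^S_4+1$ (resp.\ $w^S_4$) times; (iii) the step where Corollary \ref{cor-connected-local} forces all but one of these 11-occurrences to be flanked by weight-4 copies, after which Lemma \ref{lem-connected-4} together with Lemma \ref{lem-connected-odd} --- a lemma you never mention but which is indispensable here --- eliminates the pattern 4114 and drives $w^S_4$ to zero; and (iv) the endgame, where $w^S_1=n/2$ forces either the pattern 1312131 (killed by Theorem \ref{thm-connected-patterns}) or exactly one occurrence of 11 paired with one occurrence of 23 or 33, each requiring its own elimination (23 via Theorem \ref{thm-connected-112} and Corollary \ref{cor-connected-local}; 33 via padding and Lemmas \ref{lem-connected-4} and \ref{lem-connected-odd}). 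Without these counting and parity arguments the case analysis you describe does not terminate, so the proof does not close.
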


\begin{proof}The upper bounds are provided in Section \ref{sec-upper}. We use the MTZ formulation for connected domination from \cite{fan} to confirm that $\gamma_c(J_3) = 5, \gamma_c(J_4) = 8, \gamma_c(J_5) = 9$, and $\gamma_c(J_6) = 12$. Suppose there is some value $k \geq 7$ such that Theorem \ref{thm-connected} is true for $n = 3, \hdots, k-1$, but false for $n = k$. We will first consider the case when $k$ is odd. Then there is a connected dominating set $S$ for $J_k$ such that $|S| = 2k-2$. Clearly, $S$ contains a copy, say $J^i$, with weight 1, and $a^i \not\in S$. Then from Theorem \ref{thm-connected-remove}, it implies that there is a connected dominating set for $J_{k-1}$ with cardinality $2k-3$, which contradicts the assumption that Theorem \ref{thm-connected} is true for $J_{k-1}$.

Hence, $k$ must be even. Then, there is a connected dominating set $S$ for $J_k$ such that $|S| = 2k-1$. Suppose that there are two copies of $J_k$ with weight 2 in $S$. Then, from Theorem \ref{thm-connected-remove} we can obtain a connected dominating set for $J_{k-2}$ with cardinality $2k-5$, which contradicts the initial assumption. Hence, there must be at most one copy of $J_k$ with weight 2 in $S$. That is, either $w^S_2 = 0$ or $w^S_2 = 1$.

Suppose that $w^S_2 = 0$. Then we have $w^S_1 + w^S_3 + w^S_4 = k$, and $w^S_1 + 3w^S_3 + 4w^S_4 = 2k-1$. From the latter equation, it is clear that $w^S_1$ and $w^S_3$ must have different parity. Hence, from the former equation, $w^S_4$ must be odd. Combining the two equations, we obtain $2w^S_1 = k+1+w^S_4$. Note that this implies that more than half of the copies have weight 1 in $S$. From Theorem \ref{thm-connected-patterns}, $S$ cannot contain the pattern 111. Hence, it can be seen that $S$ contains the pattern 11 at least $w^S_4+1$ times. From Corollary \ref{cor-connected-local} we know there is at most one instance of the pattern 113 in $S$. Hence, there is at least $w^S_4$ instances of the pattern 11 where both adjacent copies have weight 4, which is impossible.

Hence, we must have $w^S_2 = 1$. Then, we have $w^S_1 + w^S_3 + w^S_4 = k-1$, and $w^S_1 + 3w^S_3 + 4w^S_4 = 2k-3$. From the latter equation, it is clear that $w^S_1$ and $w^S_3$ must have different parity. Hence, from the former equation, $w^S_4$ must be even. Combining the two equations, we obtain $2w^S_1 = k + w^S_4$. Similar to before, this implies that at least half of the copies have weight 1 in $S$. From Theorem \ref{thm-connected-patterns}, $S$ cannot contain the pattern 111. Hence, it can be seen that $S$ contains the pattern 11 at least $w^S_4$ times. Now, suppose that $S$ contains the pattern 4114. From Lemma \ref{lem-connected-4}, we can then obtain a connected dominating set for $J_{k-3}$ with cardinality $2k-7$ which has one fewer copy of weight 4 than $S$, which in turn violates Lemma \ref{lem-connected-odd}. Hence, every instance of the pattern 11 has a copy of weight 2 or 3 next to it, and from Corollary \ref{cor-connected-local} this means there is at most one instance of the pattern 11. This, in turn, implies that $w^S_4 \leq 1$, and since $w^S_4$ is even, we have $w^S_4 = 0$. Hence, $w^S_1 = \frac{k}{2}$. If there are no instances of the pattern 11, then $S$ must contain the pattern 1312131, violating Theorem \ref{thm-connected-patterns}. Hence there is exactly one instance of the pattern 11. This means that there must be one other instance in $S$ of two consecutive copies having non-unit weight. The only options are that $S$ contains the pattern 23, or the pattern 33.

Suppose $S$ contains the pattern 23. Without loss of generality, suppose that the copies $J^3, J^4$ meet this pattern. Since this is the only instance of $S$ having two consecutive copies of weight other than 1, we know that $J^2$ has weight 1. From Theorem \ref{thm-connected-112} we know that $J^1$ cannot have weight 1, or else $S$ would contain the pattern 112. The only remaining option is that $J^1$ has weight 3, and so copies $J^1, J^2, J^3, J^4$ meet the pattern 3123. Then, since $J^3$ has weight 2 and $w^S_2 = 1$, there are no other copies with weight 2. Hence, the one instance of the pattern 11 which occurs elsewhere in the graph must be followed by a copy of weight 3. That is, $S$ contains both the patterns 113 and 3123, which from Corollary \ref{cor-connected-local} is impossible. Therefore, $S$ must not contain the pattern 23, and instead contains the pattern 33.

Finally, without loss of generality, suppose that the copies $J^1, J^2$ meet the pattern 33. There is one vertex from $J^1$ which is not contained in $S$. Suppose we define $S_2$ which is equal to the union of $S$ and this one vertex. Hence, $S_2$ is a connected dominating set for $J_k$ with cardinality $2k$. Then, from Lemma \ref{lem-connected-4}, we can obtain a connected dominating set for $J_{k-1}$ with cardinality $2k-3$, which contains exactly one copy with weight 4, contradicting Lemma \ref{lem-connected-odd}.

Hence, in all cases a contradiction is reached, completing the proof.\end{proof}

\section{Weak Roman Domination and Secure Domination}

In this section, we determine (simultaneously) the weak Roman domination numbers and the secure domination numbers for flower snarks. Recall that for the weak Roman domination number, rather than seeking to construct a set $S$, we instead look to define a function $f : V \rightarrow \{0, 1, 2\}$. In order to use language consistent with the rest of this paper, in this section we define the weight of a copy $J^i$ to be equal to $f(a^i) + f(b^i) + f(c^i) + f(d^i)$. Note that this is somewhat more ambiguous than in previous sections; for instance, if a copy has weight 2, it may have two vertices with weight 1, or a single vertex with weight 2. We will deal with these ambiguities when they arise. Similarly to in previous sections, we will say that $f$ contains a pattern if there is a set of consecutive copies whose weights meet that pattern.

Although there is a technical definition for weak Roman domination (e.g. see Definition \ref{def-wrd}), it is useful to provide an intuitive interpretation. Recall that for standard domination, one interpretation is that the set of vertices needs to be protected. If a guard is placed at a vertex, then it protects that vertex, and also all adjacent vertices. A configuration of guards which protects all vertices corresponds to a dominating set. A similar interpretation applies for weak Roman domination. Suppose that at each vertex we can place up to two guards. As before, a vertex is protected if there is at least one guard there, or there is at least one guard among its adjacent vertices. Then we further consider the notion of a vertex being {\em defended}. We say a vertex $v$ is defended if there is at least one guard at $v$, or alternatively, if it possible to relocate one guard from an adjacent vertex $w$ to $v$, in such a way that all vertices are still protected by the resulting configuration of guards. In the latter case, we will say that $w$ {\em can defend} $v$. Then a weak Roman dominating function is one in which every vertex is defended.

Throughout this section, we will often use an argument which goes as follows; suppose there is exactly one guard at $v$, and that $v$ defends another adjacent vertex $w$. In order to do so, the guard from $v$ would move to $w$. However, doing so may mean another vertex $u$, also adjacent to $v$, becomes unprotected. In such a case, we will say that {\em $v$ cannot defend $w$ without leaving $u$ unprotected.} Another alternative is as follows; suppose again that $v$ defends $w$. In order to do so, the guard from $v$ would move to $w$. However, doing so may mean that $u$ will be unprotected unless there is a guard at another vertex, say $x$. In such a case, we will say {\em in order for $v$ to defend $w$, there must be a guard at $x$ to avoid leaving $u$ unprotected}.

Although the following result is simple, we include it here as it will be used many times in this section.

\begin{lemma}Consider the graph $J_n$, for $n \geq 3$, and a weak Roman dominating function $f$. If a copy $J^i$ has weight 1 in $f$, then none of the vertices from $J^i$ can defend any vertices from $J^{i-1}$ or $J^{i+1}$.\label{lem-wrd-1}\end{lemma}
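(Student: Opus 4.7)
The plan is to distinguish cases based on where the single unit of weight in $J^i$ is located, and use the fact that $a^i$ is only adjacent to vertices within $J^i$. Let me sketch the approach.

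First I would note that a vertex can only defend another vertex if it has weight at least $1$ (since the defense operation decrements $f(w)$ by $1$, requiring $f(w)\geq 1$). With that in mind, I would split into two cases.

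In the first case, suppose $f(a^i) = 1$. Then $f(b^i) = f(c^i) = f(d^i) = 0$, so none of $b^i, c^i, d^i$ can defend anything. Moreover, $a^i$ is adjacent only to $b^i, c^i, d^i$, so $a^i$ cannot defend any vertex outside $J^i$ regardless of its weight. This case is immediate.

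In the second case, $f(a^i) = 0$, so exactly one of $b^i, c^i, d^i$ has weight $1$; by the equivalence of these three vertices in the local picture, I may assume $f(b^i) = 1$ and $f(c^i) = f(d^i) = 0$. Again $c^i$ and $d^i$ have no guard to move, so they cannot defend anything. The only candidate is $b^i$, whose neighbours outside $J^i$ are $b^{i-1}$ and $b^{i+1}$. Suppose, for contradiction, that $b^i$ defends some vertex $v \in \{b^{i-1}, b^{i+1}\}$. The resulting function $g$ has $g(b^i) = 0$, while $g(a^i) = f(a^i) = 0$, $g(c^i) = 0$ and $g(d^i) = 0$. Since the neighbourhood of $a^i$ consists entirely of $b^i, c^i, d^i$, the vertex $a^i$ is undefended with respect to $g$, contradicting the definition of a weak Roman dominating function. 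Hence $b^i$ cannot defend any vertex outside $J^i$.

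There is no real obstacle here; the argument is essentially a single observation that every vertex outside $J^i$ adjacent to $J^i$ is only adjacent to a $b$-, $c$-, or $d$-vertex of $J^i$, while $a^i$ must itself remain defended and has no neighbours outside $J^i$.
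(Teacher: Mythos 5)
Your proof is correct and follows essentially the same route as the paper's: split on whether the single guard is at $a^i$ or at one of $b^i, c^i, d^i$, and in the latter case observe that moving that guard out of the copy leaves $a^i$ unprotected. The only difference is that you spell out the formal verification with the function $g$, which the paper leaves implicit.
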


\begin{proof}Since $J^i$ has weight 1, there is exactly one vertex with positive weight. If $f(a^i) = 1$, then the result follows immediately since $a^i$ is not adjacent to any vertices from $J^{i-1}$ or $J^{i+1}$. Then, suppose instead that $f(a^i) = 0$ and one of the other vertices in $J^i$ has weight 1. Then that vertex cannot defend any vertex from $J^{i-1}$ or $J^{i+1}$ without leaving $a^i$ unprotected.\end{proof}

\begin{theorem}Consider the graph $J_n$, for $n \geq 3$, and a weak Roman dominating function $f$. Then $f$ does not contain the patterns 111, 1121 or 1212121.\label{thm-wrd-patterns}\end{theorem}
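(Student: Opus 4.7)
The plan is to prove each of the three forbidden patterns by contradiction, using Lemma \ref{lem-wrd-1} as the main engine: no vertex of a weight-1 copy can supply a defender to an adjacent copy, because removing its single unit of weight leaves the copy's own $a$-vertex undefended. Combined with the fact that $a^i$ is adjacent only to vertices of $J^i$, and so contributes nothing to external defence, this quickly prunes the viable configurations.

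For pattern 111, relabel the copies as $J^1, J^2, J^3$ and split on $f(a^2)$. If $f(a^2) = 1$, then $b^2, c^2, d^2$ all have weight 0 and, by Lemma \ref{lem-wrd-1}, their only possible defender is $a^2$. For $a^2$ to defend $b^2$ without leaving $c^2$ or $d^2$ unprotected after its single guard moves, at least one of $\{c^1, c^3\}$ and at least one of $\{d^1, d^3\}$ must carry positive weight. Applying the analogous requirement to the defence of $c^2$ and of $d^2$ forces each of the three pairs $\{b^1, b^3\}, \{c^1, c^3\}, \{d^1, d^3\}$ to contain a positive-weight vertex, requiring at least 3 units of weight in $J^1 \cup J^3$ and contradicting the combined weight of 2. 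If $f(a^2) = 0$, WLOG $f(b^2) = 1$, and $c^2$ has no positive-weight neighbour inside $J^2$; its potential defenders lie in $\{c^1, c^3\}$, both of which Lemma \ref{lem-wrd-1} rules out.

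For pattern 1121, relabel as $J^1, \hdots, J^4$ with weights $1,1,2,1$. The same coverage argument applied to $J^2$ now has $J^3$ (weight 2) in place of one weight-1 flank, raising the coverage budget in $J^1 \cup J^3$ to 3. If $f(a^2) = 1$, the coverage requirement forces $f(a^3) = 0$ with $J^3$'s weight 2 split across two of $\{b^3, c^3, d^3\}$ and $J^1$'s unit placed on the remaining coordinate; call the residual weight-0 vertex of $J^3$ by the name $z^3$. Its only positive-weight external neighbour is $z^4 \in J^4$, in the weight-1 copy $J^4$, and Lemma \ref{lem-wrd-1} rules out $z^4$ as a defender, while $a^3 = 0$ leaves no internal defender. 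If $f(a^2) = 0$, WLOG $f(b^2) = 1$; then defending both $c^2$ and $d^2$ requires positive weight at both $c^3$ and $d^3$ (the only valid candidates after ruling out the weight-1 copies $J^1$ and $J^4$), forcing $f(c^3) = f(d^3) = 1$. But then $b^3$ is weight 0 with only weight-1 copies $J^2, J^4$ as external flanks, and Lemma \ref{lem-wrd-1} leaves $b^3$ undefendable.

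For pattern 1212121, relabel as $J^1, \hdots, J^7$ with weights $1,2,1,2,1,2,1$. The weight-2 copies $J^2, J^4, J^6$ can in principle supply external defenders, but a weight-2 copy $J^i$ can only do so if removing one unit leaves $a^i$ protected---forcing $J^i$ to be split across two non-$a$ vertices with $a^i = 0$, or else the copy contributes no external defender at all. Case analysis on $f(a^4)$: if $f(a^4) = 2$, then $J^4$ supplies no external defender and the coverage at $J^3$ falls entirely on $J^2$ (weight 2), which cannot span all three coordinates $\{b,c,d\}$; symmetrically at $J^5$, $J^6$ is overloaded. If $a^4 = 0$ with $J^4$ split across two non-$a$ vertices, the simultaneous coverage requirements at both $J^3$ and $J^5$ force $J^4$ to contribute to three distinct coordinates, exceeding its weight of 2. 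The residual subcase $f(a^4) = 1$ is handled similarly by tracking how the single split unit must serve both $J^3$ and $J^5$, with the weight-1 copies $J^1, J^7$ at the ends furnishing the last contradictions. The main obstacle is the case explosion for pattern 1212121, which is kept tractable by the observation that only weight-2 copies with $a^i = 0$ can supply external defenders, sharply bounding the enumeration.
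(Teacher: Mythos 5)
Your treatment of the patterns 111 and 1121 follows essentially the same route as the paper: Lemma \ref{lem-wrd-1} localizes all defence of a copy flanked by weight-1 copies, and a budget count on the flanking guards finishes the job. The 111 argument is sound. In the 1121 case, your assertion that the coverage requirement ``forces $f(a^3)=0$ with $J^3$'s weight split across two of $\{b^3,c^3,d^3\}$'' skips the configurations $f(a^3)\geq 1$ and $f(x^3)=2$, and also the possibility that some $x^2$ is defended by $x^3$ rather than by $a^2$; these alternatives do all collapse (by the same budget count on $J^1$, or on $J^4$, or because the residual weight-0 vertex of $J^3$ is still undefendable), but they need to be addressed rather than folded into the word ``forces.''

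The genuine gap is in the pattern 1212121. Your key structural claim --- that a weight-2 copy $J^i$ can supply an external defender only if its weight is split across two non-$a$ vertices with $f(a^i)=0$ --- is false. If $f(a^i)=1$ and $f(b^i)=1$, then $b^i$ can move its guard to $b^{i\pm 1}$ while $a^i$ remains protected by its own guard and $c^i, d^i$ remain protected by $a^i$; the same holds if $f(b^i)=2$. The configuration $f(a^i)=f(x^i)=1$ is exactly what the flanking weight-1 copies force at $J^2$, $J^4$ and $J^6$, so your ``residual subcase $f(a^4)=1$'' is not residual at all: it is the entire content of the theorem, and your bounding observation would wrongly strip $J^4$ of the external defenders that make it hard. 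Indeed, if your observation were correct, the shorter pattern 12121 would already be forbidden, but it is not --- the configuration with guards at $c^1$; $a^2,b^2$; $d^3$; $a^4,c^4$; $b^5$ is locally consistent, and the contradiction only materializes after propagating forced guards out to $J^6$ and $J^7$ (in the paper, $c^5$ ends up defendable only by $c^4$, which cannot move without exposing $c^3$). As written, your proof of the third pattern does not go through; you need to carry out the propagation across all seven copies in the $f(a^2)=f(a^4)=f(a^6)=1$ case.
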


\begin{proof}Suppose that $f$ contains the pattern 111. Without loss of generality, suppose that copies $J^1, J^2, J^3$ meet the pattern 111 in $f$. This situation is displayed in part (a) of Figure \ref{fig-wrd-patterns}.  Each vertex in $J^2$ must be defended, but since $J^1$ and $J^3$ have weight 1, from Lemma \ref{lem-wrd-1} none of their vertices can defend any vertices from $J^2$. Hence, each vertex in $J^2$ must be defended solely by vertices in $J^2$. It is clear that this implies that $f(a^2) = 1$. Then, consider $b^2$. In order for $a^2$ to defend $b^2$, there must be a guard at either $c^1$ or $c^3$ to avoid leaving $c^2$ unprotected. Likewise, in order for $a^2$ to defend $b^2$, there must be a guard at either $d^1$ or $d^3$ to avoid leaving $d^2$ unprotected. Since $J^1$ and $J^3$ have weight 1, at most one vertex from each can have a guard. Without loss of generality, suppose that $f(c^1) = 1$ and $f(d^3) = 1$. Then $a^2$ cannot defend $c^2$ without leaving $b^2$ unprotected. Hence, this is impossible, and $f$ does not contain the pattern 111.

Next, suppose that $f$ contains the pattern 1121. Without loss of generality, suppose that copies $J^1$, $J^2$, $J^3$, $J^4$ meet the pattern 1121 in $f$. Since $J^2$ and $J^4$ have weight 1, from Lemma \ref{lem-wrd-1} none of their vertices can defend any vertices from $J^3$. Hence, each vertex from $J^3$ must be defended solely by vertices in $J^3$. Clearly, this implies that $f(a^3) \geq 1$. There are two possibilities, either $f(a^3) = 2$, or $f(a^3) = 1$ and there is a guard at another vertex in $J^3$. Suppose first that $f(a^3) = 2$. Then there are no vertices from $J^1$ or $J^3$ that can defend any vertices from $J^2$. Hence, $J^2$ is in an equivalent situation to $J^2$ in the previous paragraph and so this situation is impossible. Instead, suppose that $f(a^3) = 1$ and, without loss of generality, that $f(b^3) = 1$. This situation is displayed in part (b) of Figure \ref{fig-wrd-patterns}. Clearly, vertices $c^2$ and $d^2$ can only be defended by vertices from $J^2$, and hence we must have $f(a^2) = 1$. Then, in order for $a^2$ to defend $c^2$, there must be a guard at $d^1$ to avoid leaving $d^2$ unprotected. Similarly, in order for $a^2$ to defend $d^2$, there must be a guard at $c^1$ to avoid leaving $c^2$ unprotected. This is impossible since $J^1$ has weight 1, and so $f$ does not contain the pattern 1121.

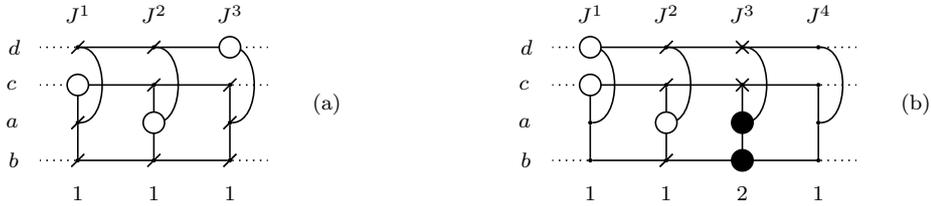
\begin{figure}[h!]\begin{center}\begin{tikzpicture}[smallgraph]\flower{3} \genlab{3} \patt{1}{1} \patt{2}{1} \patt{3}{1} \inSa{c}{1} \inSa{a}{2} \inSa{d}{3} \notSa{a}{1} \notSa{b}{1} \notSa{d}{1} \notSa{b}{2} \notSa{c}{2} \notSa{d}{2} \notSa{a}{3} \notSa{b}{3} \notSa{c}{3} \node[white] [label=right:{(a)}] at (4,1.25) {}; \end{tikzpicture} \hspace*{2cm} \begin{tikzpicture}[smallgraph]\flower{4} \genlab{4} \patt{1}{1} \patt{2}{1} \patt{3}{2} \patt{4}{1} \inS{a}{3} \inS{b}{3} \notS{c}{3} \notS{d}{3} \inSa{c}{1} \inSa{d}{1} \inSa{a}{2} \notSa{b}{2} \notSa{c}{2} \notSa{d}{2} \node[white] [label=right:{(b)}] at (5,1.25) {}; \end{tikzpicture}
\caption{The first two situations described in the proof of Theorem \ref{thm-wrd-patterns}. In part (a), $a^2$ cannot defend $c^2$ without leaving $b^2$ unprotected. In part (b), there are too many guards in $J^1$.\label{fig-wrd-patterns}}\end{center}\end{figure}

Finally, suppose that $f$ contains the pattern 1212121. Without loss of generality, suppose that copies $J^1, \hdots, J^7$ meet the pattern 1212121 in $f$. Since $J^1$ and $J^3$ have weight 1, from Lemma \ref{lem-wrd-1} none of their vertices can defend any vertices from $J^2$. Hence, each vertex from $J^2$ must be defended solely by vertices in $J^2$, which implies that $f(a^2) \geq 1$. Analogous arguments imply that $f(a^4) \geq 1$ and $f(a^6) \geq 1$. Now, consider $J^2$. There are two possibilities, either $f(a^2) = 2$, or $f(a^2) = 1$ and there is a guard at another vertex in $J^2$. Suppose first that $f(a^2) = 2$. Then there are no vertices from $J^2$ that can defend any vertices from $J^3$, and at most one vertex from $J^4$ can defend a vertex from $J^3$. Hence, $J^3$ is in an equivalent situation to $J^2$ in the previous paragraph, and so this situation is impossible.

Instead, suppose that $f(a^2) = 1$ and, without loss of generality, that $f(b^2) = 1$. This situation is displayed in Figure \ref{fig-wrd-patterns2}. Then, in order for $a^2$ to defend $c^2$, there must be a guard at either $d^1$ or $d^3$ to avoid leaving $d^2$ unprotected. Similarly, in order for $a^2$ to defend $d^2$, there must be a guard at either $c^1$ or $c^3$ to avoid leaving $c^2$ unprotected. Since $J^1$ and $J^3$ have weight 1, at most one vertex from each may have a guard. Without loss of generality, suppose that $f(c^1) = 1$ and $f(d^3) = 1$. Then, since $f$ is dominating, we must have $f(c^4) = 1$ in order to protect $c^3$. Now, in order for $a^4$ to defend $d^4$, there must be a guard at $b^5$ to avoid leaving $b^4$ unprotected. Then, since $f$ is dominating, we must have $f(d^6) = 1$ in order to protect $d^5$. Finally, consider vertex $c^5$. There is only one adjacent vertex with a guard that can defend it, $c^4$. However, $c^4$ cannot defend $c^5$ without leaving $c^3$ unprotected. Thus, $c^5$ is not defended. This is impossible, and hence $f$ does not contain the pattern 1212121.\end{proof}

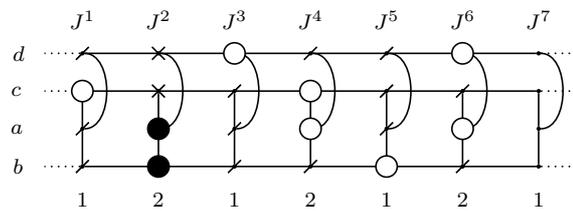
\begin{figure}[h!]\begin{center}\begin{tikzpicture}[smallgraph]\flower{7} \genlab{7} \patt{1}{1} \patt{2}{2} \patt{3}{1} \patt{4}{2} \patt{5}{1} \patt{6}{2} \patt{7}{1} \inS{a}{2} \inS{b}{2} \notS{c}{2} \notS{d}{2} \inSa{c}{1} \inSa{d}{3} \inSa{a}{4} \inSa{c}{4} \inSa{b}{5} \inSa{a}{6} \inSa{d}{6} \notSa{a}{1} \notSa{b}{1} \notSa{d}{1} \notSa{a}{3} \notSa{b}{3} \notSa{c}{3} \notSa{b}{4} \notSa{d}{4} \notSa{a}{5} \notSa{c}{5} \notSa{d}{5} \notSa{b}{6} \notSa{c}{6}\end{tikzpicture}
\caption{The third situation described in the proof of Theorem \ref{thm-wrd-patterns}. Here, $c^4$ cannot defend $c^5$ without leaving $c^3$ unprotected.\label{fig-wrd-patterns2}}\end{center}\end{figure}

\begin{lemma}Consider the graph $J_n$, for $n \geq 4$, and a weak Roman dominating function $f$ with weight $w(f) \leq \frac{3n}{2}$. Then any set of four consecutive copies have a combined weight of 6 in $f$, every copy has weight either 1 or 2 in $f$, and $w(f) = \frac{3n}{2}$.\label{lem-wrd-weight6}\end{lemma}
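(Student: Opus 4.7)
The plan is to combine the forbidden patterns from Theorem \ref{thm-wrd-patterns} with a simple averaging argument around the cycle. First I would show that any four consecutive copies have combined weight at least $6$ in $f$. By Corollary \ref{cor-weight1}, every copy contributes at least $1$, so such a $4$-tuple has weight at least $4$, with weight $4$ forcing the pattern $1111$, which contains the forbidden $111$. A $4$-tuple of weight $5$ must be a permutation of $(1,1,1,2)$: the patterns $1112$ and $2111$ contain $111$ and are ruled out, $1121$ is directly forbidden, and $1211$ is simply $1121$ read in the reverse cyclic direction. Since $J_n$ admits a reversal symmetry on the sequence of copies (induced by $i\mapsto n-i+1$ together with the local swap $c\leftrightarrow d$, which gives an automorphism once the twist is placed elsewhere, as the paper's conventions allow), the pattern $1211$ is forbidden as well. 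Hence every four consecutive copies have weight at least $6$.

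Next, since there are $n$ cyclic $4$-tuples and each copy lies in exactly $4$ of them, summing yields $4w(f)\ge 6n$, i.e., $w(f)\ge \tfrac{3n}{2}$. Combined with the hypothesis $w(f)\le \tfrac{3n}{2}$, this forces $w(f)=\tfrac{3n}{2}$, and equality in the $n$ lower bounds forces every $4$-tuple to have weight exactly $6$.

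Finally, I would rule out any copy of weight $\ge 3$. Suppose $J^i$ has weight $k\ge 3$. The $4$-tuple $J^{i-3},J^{i-2},J^{i-1},J^i$ has weight exactly $6$ by the previous step, so the combined weight of $J^{i-3},J^{i-2},J^{i-1}$ equals $6-k\le 3$. By Corollary \ref{cor-weight1} each of these three copies has weight at least $1$, so the sum is at least $3$; thus $k=3$ and each of $J^{i-3},J^{i-2},J^{i-1}$ has weight exactly $1$, producing the forbidden pattern $111$. This contradiction shows that every copy has weight $1$ or $2$. The only delicate point is the elimination of the weight-$5$ pattern $1211$, which I expect to dispatch via the reversal symmetry rather than redoing the case analysis of Theorem \ref{thm-wrd-patterns}.
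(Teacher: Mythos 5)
Your proof is correct and follows essentially the same route as the paper: bound each window of four consecutive copies below by $6$ using Corollary \ref{cor-weight1} and the forbidden patterns of Theorem \ref{thm-wrd-patterns}, sum over the $n$ cyclic windows to force equality with the hypothesis $w(f) \leq \frac{3n}{2}$, and then exclude copies of weight at least $3$ via the pattern $111$. Your explicit treatment of the pattern $1211$ through the reversal automorphism is simply a more careful rendering of the paper's ``up to symmetry'' remark and introduces no real difference in approach.
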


\begin{proof}Denote by $w^4(i)$ the combined weights of $J^i, J^{i+1}, J^{i+2}, J^{i+3}$ where the superscripts are taken modulo $n$. Now, by Corollary \ref{cor-weight1} we know that no copies have weight 0, and so $w^4(i) \geq 4$. Suppose $w^4(i) = 4$. This implies that $J^i, J^{i+1}, J^{i+2}, J^{i+3}$ all have weight 1. However, this means $f$ contains the pattern 111, which by Theorem \ref{thm-wrd-patterns} is impossible. Then, suppose $w^4(i) = 5$. Up to symmetry, there are only two possibilities; copies $J^i, J^{i+1}, J^{i+2}, J^{i+3}$ either have the pattern 1112 or 1121. Both of these options are impossible by Theorem \ref{thm-wrd-patterns}. Hence, we have $w^4(i) \geq 6$. Then, by assumption, we have $\sum_{i=1}^n w^4(i) \leq 6n$. This is only possible if all $w^4(i) = 6$, and also implies that $w(f) = \frac{3n}{2}$.

Since each copy has weight at least one, and each set of four consecutive copies has a combined weight of six, it is clear that any individual copy must have weight at most three. Suppose there is a copy $J^i$ with weight 3 in $f$, then the next three copies must each have weight 1. However, by Theorem \ref{thm-wrd-patterns} this is impossible. Hence, each copy $J^i$ must have weight either 1 or 2 in $f$.\end{proof}

\begin{lemma}Consider the graph $J_n$, for $n \geq 3$, and a weak Roman dominating function $f$ with weight $w(f)$. Suppose there is an integer $k \geq 3$ such that in $f$, the configuration of guards in copies $J^{i+1}, \hdots, J^{i+k}$ is identical to the configuration of guards in $J^{i+k+1}, \hdots, J^{i+2k}$, and suppose that $J^{i+1}, \hdots, J^{i+k}$ collectively contain $g$ guards. Then there is a weak Roman dominating function for $J_{n-k}$ with weight equal to $w(f) - g$.\label{lem-wrd-overlap}\end{lemma}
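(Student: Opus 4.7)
The plan is to realise $J_{n-k}$ as the graph obtained from $J_n$ by excising $k$ consecutive copies and splicing the surviving ends, then to restrict $f$ to the remaining vertices. A naive excision of the first block $J^{i+1},\ldots,J^{i+k}$ does not quite work: the left neighbour of $J^{i+k+1}$ would change from $J^{i+k}$ (with the shared configuration $C_k$, writing $C_j$ for the common configuration of the $j$th copy of each block) to $J^i$, whose configuration need not resemble $C_k$. The remedy is to shift the excision by one, deleting instead $J^{i+2},\ldots,J^{i+k+1}$ and adding edges $b^{i+1}b^{i+k+2}$, $c^{i+1}c^{i+k+2}$, $d^{i+1}d^{i+k+2}$ (taking, as throughout the paper, the twist to lie elsewhere). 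The right neighbour of $J^{i+1}$ now changes from $J^{i+2}$ to $J^{i+k+2}$, both of configuration $C_2$, and the left neighbour of $J^{i+k+2}$ changes from $J^{i+k+1}$ to $J^{i+1}$, both of configuration $C_1$; every surviving vertex therefore sees exactly the same multiset of $f$-values on its closed neighbourhood as it did in $J_n$. Setting $f'(v)=f(v)$ on the surviving vertices, and observing that the deleted copies carry the configurations $C_2,C_3,\ldots,C_k,C_1$ whose weights add to $g$, we obtain $w(f')=w(f)-g$.

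The main work is then to certify that $f'$ is a weak Roman dominating function on $J_{n-k}$. For each vertex $v$ with $f'(v)=0$, I would reuse the defender $w$ prescribed by $f$'s strategy at $v$ in $J_n$. If $w$ survives the excision, the guard swap transfers directly. Otherwise $v$ sits in $J^{i+1}$ with its $J_n$-defender $w$ in $J^{i+2}$, or symmetrically $v\in J^{i+k+2}$ with $w\in J^{i+k+1}$; in either case I would substitute $w$ by the corresponding vertex $w'$ in the other block, which lies in $J_{n-k}$, is adjacent to $v$ through the new seam edge, and by the identical-blocks hypothesis satisfies $f(w')=f(w)$.

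The principal obstacle will be the seam analysis: verifying that this substituted swap leaves no vertex of $J_{n-k}$ undefended. This reduces to the closed-neighbourhood invariance noted above, since every vertex in $J_{n-k}$ has the same multiset of $f$-values on its closed neighbourhood as either itself or its corresponding vertex in $J_n$; thus any vertex undefended after the substituted swap would pull back to an undefended vertex after the original swap in $J_n$, contradicting the assumption that $f$ is weak Roman dominating.
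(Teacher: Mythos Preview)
Your approach matches the paper's: excise $k$ consecutive copies, splice the ends to obtain $J_{n-k}$, restrict $f$, and argue that weak Roman domination is preserved because local guard configurations are unchanged. Your shifted excision of $J^{i+2},\ldots,J^{i+k+1}$ (rather than the paper's $J^{i+1},\ldots,J^{i+k}$) is in fact a sharper choice---it makes the closed-neighbourhood $f$-multiset literally invariant at every surviving vertex, exactly as you note---whereas the paper simply asserts the analogous distance-$3$ property without further justification; your seam sketch is at least as careful as the paper's own treatment.
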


\begin{proof}Suppose that we ``smooth out" copies $J^{i+1}, \hdots, J^{i+k}$ in the way displayed in Figure \ref{fig-connected-GG2}. That is, we add edges connecting $b^i$ to $b^{i+k+1}$, $c^i$ to $c^{i+k+1}$, and $d^i$ to $d^{i+k+1}$, and then we delete the copies $J^{i+1}, \hdots, J^{i+k}$. Call the resulting graph $G_2$. It is easy to check that $G_2$ is isomorphic to $J_{n-k}$. Then, define a new function $f_2$ such that $f_2(v) = f(v)$ if $v \in V \setminus \{V(J^{i+1}) \cup \hdots \cup V(J^{i+k})\}$, and $f_2(v)$ is undefined otherwise. Since $J^{i+1}, \hdots, J^{i+k}$ collectively contain $g$ guards in $f$, it is clear that $w(f_2) = w(f) - g$. Then if we can show that $f_2$ is a weak Roman dominating function for $G_2$, the result follows immediately.

For a given function, one can check to see if a vertex $v$ is defended by observing the configuration of guards in vertices no more than distance three away from $v$. Then, for any vertex $v$ in $G_2$, consider the configuration of graphs (according to $f_2)$ in the subgraph induced by the vertices at most distance 3 from $v$. By assumption, this is identical to the corresponding configuration of guards (according to $f$) in the subgraph induced by the vertices at most distance 3 from $v$ in $J_n$. Hence, all vertices of $G$ are defended in $f_2$, and so $f_2$ is a weak Roman dominating function for $G$, leading to the desired result.\end{proof}

In the proof of the following theorem, whenever there are two guards at a vertex, we will mark that vertex with a \begin{tikzpicture}[smallgraph,scale=0.8]\inSS{b}{1}\end{tikzpicture}.

\begin{theorem}Consider the graph $J_n$, for $n \geq 9$, and a weak Roman dominating function $f$. If $f$ contains the pattern 21122112, then there exists a weak Roman dominating function for $J_{n-4}$ with weight equal to $w(f) - 6$.\label{thm-wrd-1122}\end{theorem}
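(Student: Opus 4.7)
The plan is to reduce the statement to Lemma \ref{lem-wrd-overlap} applied with $k=4$ and $g=6$. Suppose $J^i, J^{i+1}, \ldots, J^{i+7}$ realise the pattern 21122112. Then both of the consecutive four-copy blocks $J^i, J^{i+1}, J^{i+2}, J^{i+3}$ and $J^{i+4}, J^{i+5}, J^{i+6}, J^{i+7}$ realise the pattern 2112, and each block contains exactly six guards. If I can show that, after possibly relabelling the $b$-, $c$- and $d$-vertices using the local symmetry of the flower snark (valid because the twist can be assumed to lie elsewhere), these two blocks carry \emph{identical} guard configurations, then Lemma \ref{lem-wrd-overlap} immediately yields a weak Roman dominating function on $J_{n-4}$ of weight $w(f)-6$.

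The core of the argument is therefore an analysis of which configurations are possible on a 2112 block. By Lemma \ref{lem-wrd-1}, no vertex of a weight-1 copy can defend any vertex of an adjacent copy; so every vertex of $J^{i+1}$ must be defended using only guards in $J^i$ and $J^{i+1}$, and a symmetric statement holds for $J^{i+2}$, $J^{i+5}$ and $J^{i+6}$. I would use these constraints to enumerate the admissible placements of the single guard in each weight-1 copy and of the two guards in each weight-2 copy, using in each case the observation that $a^{i+1}$ has only intra-copy neighbours, so its defence is extremely restrictive. The expected outcome is that the admissible configurations of a 2112 block form a single orbit under the local $b$-$c$-$d$ symmetry.

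I would then examine the seam between the two blocks, formed by the two consecutive weight-2 copies $J^{i+3}$ and $J^{i+4}$. The guard placements in $J^{i+3}$ are constrained by the defence needs of $J^{i+2}$, while those in $J^{i+4}$ are constrained by the defence needs of $J^{i+5}$; simultaneously, $J^{i+3}$ and $J^{i+4}$ interact with each other since each must also contribute to dominating the other. Reconciling these constraints should force a single consistent labelling of $b$, $c$, $d$ across the full 21122112 pattern, making the two blocks' configurations coincide vertex by vertex. Applying Lemma \ref{lem-wrd-overlap} then closes the argument.

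The main obstacle I anticipate is the case analysis pinning down the admissible configurations of a 2112 block. In particular, the defence of the $b$-, $c$- and $d$-vertices of each weight-1 copy couples to both of its weight-2 neighbours in a slightly asymmetric way, with several candidate relocations needing to be ruled out or identified. The argument must carefully track which relocations are available and verify that no admissible configuration of the second block is a permutation of the first that cannot be brought into coincidence with it through the permissible local relabelling.
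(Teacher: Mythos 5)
Your overall strategy coincides with the paper's: force the guard configuration on the eight copies realising 21122112 until the two 2112 halves are literally identical, then invoke Lemma \ref{lem-wrd-overlap} with $k=4$ and $g=6$. The gap is that the entire substance of the proof --- the case analysis that actually forces the configuration --- is deferred as an ``expected outcome,'' and the way you propose to organise it does not work. You plan first to show that the admissible configurations of a single 2112 block form one orbit under the local $b$-$c$-$d$ symmetry, by analysing that block in isolation, and only afterwards to reconcile the two blocks at the seam. But the forcing does not decompose this way: several candidate configurations of the first block can only be eliminated by propagating constraints deep into the second block. For instance, in the paper's analysis the subcase $f(a^3)=1$ with $c^4$ defending $c^3$ is refuted by showing that this forces guards at $c^6$ and $d^7$ and then that $a^5$ is left undefended --- an argument that runs three copies into the \emph{other} block. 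An isolated 2112 block therefore admits configurations that are only ruled out by the full eight-copy context, so the intermediate claim you rely on is not available, and the elimination must be carried out on the whole window at once.

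A second, smaller issue concerns the relabelling. Lemma \ref{lem-wrd-overlap} requires the two configurations to be \emph{identical}, and a permutation of $b,c,d$ applied to one block but not the other is not an automorphism of $J_n$: the edges $b^{i+3}b^{i+4}$, $c^{i+3}c^{i+4}$, $d^{i+3}d^{i+4}$ tie the labellings of the two blocks together. You do note that a single consistent labelling is needed, but your opening step (``after possibly relabelling\dots these two blocks carry identical guard configurations'') reads as if the symmetry can be spent per block, which it cannot. The paper avoids this by using the relabelling freedom exactly once --- fixing, say, $f(b^4)=f(c^4)=1$ and then $f(b^5)=1$ without loss of generality --- after which every remaining guard position is forced and the coincidence of the two halves is read off directly rather than arranged.
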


\begin{proof}Without loss of generality, suppose that copies $J^1, \hdots, J^{8}$ meet the pattern 21122112 in $f$. Consider copy $J^4$, and suppose that $f(a^4) = 2$. This situation is displayed in part (a) of Figure \ref{fig-wrd-1122}. Then, no vertices from $J^4$ can defend any vertices from $J^3$, and by Lemma \ref{lem-wrd-1} the same is also true for $J^2$. Hence, all four vertices in $J^3$ must be defended by vertices in $J^3$. This implies that $f(a^3) = 1$. Then, in order for $a^3$ to defend $b^3$, there must be a guard at $c^2$ to avoid leaving $c^3$ unprotected, and there must also be a guard at $d^2$ to avoid leaving $d^3$ unprotected. Since $J^2$ has weight 1, this is impossible, and so $f(a^4) \neq 2$.

Then, suppose that $f(a^4) = 1$. Then there is another vertex in $J^4$ with a guard. Without loss of generality, suppose that $f(b^4) = 1$. This situation is displayed in part (b) of Figure \ref{fig-wrd-1122}. Using a similar argument as in the previous paragraph, vertices $c^3$ and $d^3$ must be defended by vertices from $J^3$, and hence $f(a^3) = 1$. Then, in order for $a^3$ to defend $c^3$, there must be a guard at $d^2$ to avoid leaving $d^3$ unprotected. Likewise, in order for $a^3$ to defend $d^3$, there must be a guard at $c^2$ to avoid leaving $c^3$ unprotected. Since $J^2$ has weight 1, this is impossible. Hence we can conclude that $f(a^4) = 0$. It is clear that identical arguments can be made to conclude that $f(a^1) = f(a^5) = f(a^8) = 0$ as well.

Then, suppose that there is a vertex in $J^4$ with two guards. Without loss of generality, suppose that $f(b^4) = 2$. This situation is displayed in part (c) of Figure \ref{fig-wrd-1122}. An identical argument to that in the previous paragraph can be used to show that $f(a^3) = 1$, and this again implies that $f(c^2) = f(d^2) = 1$. Since $J^2$ has weight 1, this is impossible. Hence, exactly two vertices in $J^4$ have weight 1. Again, identical arguments can be made to reach the same conclusion for each of $J^1$, $J^5$ and $J^8$.

Now, without loss of generality, suppose that $f(b^4) = f(c^4) = 1$. Then, $d^4$ must be defended by either $d^3$ or $d^5$, and by Lemma \ref{lem-wrd-1} it cannot be $d^3$. Hence, we must have $f(d^5) = 1$, and there must be one more vertex in $J^5$ with a guard, either $b^5$ or $c^5$. Due to symmetry, either choice may be made without loss of generality; we will choose $f(b^5) = 1$. Now, consider vertex $d^3$. From Lemma \ref{lem-wrd-1} it cannot be defended by $d^2$. Hence, we must have either $f(a^3) = 1$ or $f(d^3) = 1$.

Suppose that $f(a^3) = 1$. Then, $c^3$ must be defended by at least one of $a^3$ or $c^4$ (from Lemma \ref{lem-wrd-1} it cannot be defended by $c^2$). Suppose first that $a^3$ defends $c^3$. This situation is displayed in part (d) of Figure \ref{fig-wrd-1122}. In order for $a^3$ to defend $c^3$, there must be a guard at $d^2$ to avoid leaving $d^3$ unprotected. Then, in order to defend $b^2$ and $c^2$ we must have $f(b^1) = 1$ and $f(c^1) = 1$ respectively. But then, $b^1$ cannot defend $a^1$ without leaving $b^2$ unprotected, and $c^1$ cannot defend $a^1$ without leaving $c^2$ unprotected. Hence, $a^1$ is not defended, which is impossible, and so we conclude that $a^3$ cannot defend $c^3$. Hence, $c^4$ must defend $c^3$. This situation is displayed in part (e) of Figure \ref{fig-wrd-1122}. In order for $c^4$ to defend $c^3$, there must be a guard at $c^6$ to avoid leaving $c^5$ unprotected. Also, the only vertex which can defend $d^4$ is $d^5$, but in order to do so, there must be a guard at $d^7$ to avoid leaving $d^6$ unprotected. Finally, consider $a^5$, which can only be defended by either $b^5$ or $d^5$. However, $b^5$ cannot defend $a^5$ without leaving $b^6$ unprotected, and $d^5$ cannot defend $a^5$ without leaving $d^4$ unprotected. Hence, all of these cases are impossible, and so conclude that $f(a^3) = 0$, and accordingly, $f(d^3) = 1$.

The current situation is displayed in part (f) of Figure \ref{fig-wrd-1122}. Now consider $c^3$. From Lemma \ref{lem-wrd-1} it is clear that $c^2$ cannot defend $c^3$, so $c^4$ must defend $c^3$. In order for $c^4$ to defend $c^3$, there must be a guard at $c^6$ to avoid leaving $c^5$ unprotected. Likewise, from Lemma \ref{lem-wrd-1}, $d^3$ cannot defend $d^4$, so $d^5$ must defend $d^4$. In order for $d^5$ to defend $d^4$, there must be a guard at $d^7$ to avoid leaving $d^6$ unprotected. Furthermore, from Lemma \ref{lem-wrd-1}, $c^6$ cannot defend $c^5$, so $c^4$ must defend $c^5$. In order for $c^4$ to defend $c^5$, there must be a guard at $c^2$ to avoid leaving $c^3$ unprotected. Finally, from Lemma \ref{lem-wrd-1}, the only vertices which can defend $b^2$ and $d^2$ are $b^1$ and $d^1$ respectively, and so $f(b^1) = f(d^1) = 1$. Likewise, from Lemma \ref{lem-wrd-1}, the only vertices which can defend $b^7$ and $c^7$ are $b^8$ and $c^8$ respectively, so $f(b^8) = f(c^8) = 1$.

At this point, it can be seen that the configuration of guards in $J^1, J^2, J^3, J^4$ is identical to the configuration of guards in $J^5, J^6, J^7, J^8$. Then, the result follows immediately from Lemma \ref{lem-wrd-overlap}.\end{proof}

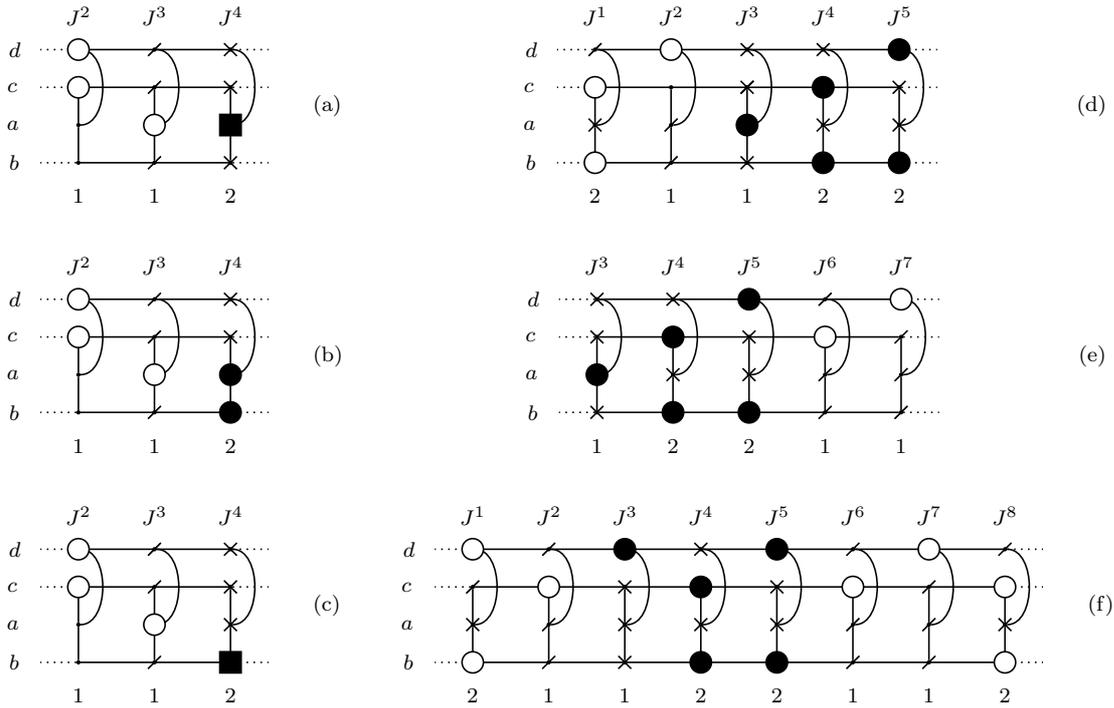
\begin{figure}[h!]\begin{center}
\begin{tikzpicture}[smallgraph]\flower{3} \lab{1}{$J^2$} \lab{2}{$J^3$} \lab{3}{$J^4$} \patt{1}{1} \patt{2}{1} \patt{3}{2} \inSS{a}{3} \notS{b}{3} \notS{c}{3} \notS{d}{3} \inSa{c}{1} \inSa{d}{1} \inSa{a}{2} \notSa{b}{2} \notSa{c}{2} \notSa{d}{2} \node[white] [label=right:{(a)}] at (4,1.25) {}; \end{tikzpicture}
\hfill \begin{tikzpicture}[smallgraph]\flower{5} \genlab{5} \patt{1}{2} \patt{2}{1} \patt{3}{1} \patt{4}{2} \patt{5}{2} \inS{a}{3} \inS{b}{4} \inS{c}{4} \inS{b}{5} \inS{d}{5} \notS{a}{1} \notS{b}{3} \notS{c}{3} \notS{d}{3} \notS{a}{4} \notS{d}{4} \notS{a}{5} \notS{c}{5} \inSa{b}{1} \inSa{c}{1} \inSa{d}{2} \notSa{d}{1} \notSa{a}{2} \notSa{b}{2} \notSa{c}{3} \node[white] [label=right:{(d)}] at (7.25,1.25) {}; \end{tikzpicture} \hspace*{3.25cm}\\
\vspace*{0.5cm}\begin{tikzpicture}[smallgraph]\flower{3} \lab{1}{$J^2$} \lab{2}{$J^3$} \lab{3}{$J^4$} \patt{1}{1} \patt{2}{1} \patt{3}{2} \inS{a}{3} \inS{b}{3} \notS{c}{3} \notS{d}{3} \inSa{c}{1} \inSa{d}{1} \inSa{a}{2} \notSa{b}{2} \notSa{c}{2} \notSa{d}{2} \node[white] [label=right:{(b)}] at (4,1.25) {}; \end{tikzpicture}
\hfill \begin{tikzpicture}[smallgraph]\flower{5} \lab{1}{$J^3$} \lab{2}{$J^4$} \lab{3}{$J^5$} \lab{4}{$J^6$} \lab{5}{$J^7$} \patt{1}{1} \patt{2}{2} \patt{3}{2} \patt{4}{1} \patt{5}{1} \inS{a}{1} \inS{b}{2} \inS{c}{2} \inS{b}{3} \inS{d}{3} \notS{b}{1} \notS{c}{1} \notS{d}{1} \notS{a}{2} \notS{d}{2} \notS{a}{3} \notS{c}{3} \inSa{c}{4} \inSa{d}{5} \notSa{a}{4} \notSa{b}{4} \notSa{d}{4} \notSa{a}{5} \notSa{b}{5} \notSa{c}{5} \node[white] [label=right:{(e)}] at (7.25,1.25) {}; \end{tikzpicture} \hspace*{3.25cm}\\
\vspace*{0.5cm}\begin{tikzpicture}[smallgraph]\flower{3} \lab{1}{$J^2$} \lab{2}{$J^3$} \lab{3}{$J^4$} \patt{1}{1} \patt{2}{1} \patt{3}{2} \inSS{b}{3} \notS{a}{3} \notS{c}{3} \notS{d}{3} \inSa{c}{1} \inSa{d}{1} \inSa{a}{2} \notSa{b}{2} \notSa{c}{2} \notSa{d}{2} \node[white] [label=right:{(c)}] at (4,1.25) {}; \end{tikzpicture} \hfill \begin{tikzpicture}[smallgraph]\flower{8} \genlab{8} \patt{1}{2} \patt{2}{1} \patt{3}{1} \patt{4}{2} \patt{5}{2} \patt{6}{1} \patt{7}{1} \patt{8}{2} \inS{d}{3} \inS{b}{4} \inS{c}{4} \inS{b}{5} \inS{d}{5} \notS{a}{1} \notS{a}{3} \notS{b}{3} \notS{c}{3} \notS{a}{4} \notS{d}{4} \notS{a}{5} \notS{c}{5} \notS{a}{8} \inSa{b}{1} \inSa{d}{1} \inSa{c}{2} \inSa{c}{6} \inSa{d}{7} \inSa{b}{8} \inSa{c}{8} \notSa{c}{1} \notSa{a}{2} \notSa{b}{2} \notSa{d}{2} \notSa{a}{6} \notSa{b}{6} \notSa{d}{6} \notSa{a}{7} \notSa{b}{7} \notSa{c}{7} \notSa{d}{8} \node[white] [label=right:{(f)}] at (9,1.25) {}; \end{tikzpicture}
\caption{The six situations described in the proof of Theorem \ref{thm-wrd-1122}. In parts (a), (b) and (c), there are too many guards in copy $J^2$. In part (d), $a^1$ is undefended. In part (e), $a^5$ is undefended. In part (f), the configuration of guards in $J^1, J^2, J^3, J^4$ is identical to that in $J^5, J^6, J^7, J^8$.\label{fig-wrd-1122}}\end{center}\end{figure}

We are finally ready to prove the main result of this section.

\begin{theorem}Consider the graph $J_n$, for $n \geq 3$. Then, $\gamma_s(J_n) = \gamma_r(J_n) = \left\lceil\displaystyle\frac{3n+1}{2}\right\rceil.$\label{thm-wrd}\end{theorem}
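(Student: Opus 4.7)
The upper bound on $\gamma_s(J_n)$ was established in Section \ref{sec-upper} by explicit construction, and since $\gamma_r(J_n) \le \gamma_s(J_n)$, it suffices to prove the matching lower bound $\gamma_r(J_n) \ge \lceil (3n+1)/2 \rceil$. My plan is to verify the small cases $n = 3, 4, 5, 6, 7, 8$ with a standard mixed-integer linear programming formulation of weak Roman domination, and then to proceed by strong induction on $n \ge 9$. In fact only the residue class $n \equiv 0 \pmod 4$ will actually invoke the induction hypothesis; the other residues are knocked out directly by the structural results developed earlier in this section.

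Assume $n \ge 9$ and that the formula holds for all smaller values. Suppose, for contradiction, that $f$ is a weak Roman dominating function on $J_n$ with $w(f) < \lceil (3n+1)/2 \rceil$. If $n$ is odd then $w(f) \le (3n-1)/2 < 3n/2$, but Lemma \ref{lem-wrd-weight6} asserts $w(f) = 3n/2$, which is not an integer: contradiction. So $n$ must be even, in which case $w(f) \le 3n/2$ and Lemma \ref{lem-wrd-weight6} forces $w(f) = 3n/2$, every copy to have weight $1$ or $2$, and every four consecutive copies to have combined weight exactly $6$. Writing $w_i$ for the weight of $J^i$, this window condition telescopes to $w_i = w_{i+4}$ for every $i$ (mod $n$), so the weight sequence is $\gcd(4,n)$-periodic on the cycle.

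For $n \equiv 2 \pmod 4$ the shift by $4$ has only two orbits, giving $w_1 = w_3 = \cdots$ and $w_2 = w_4 = \cdots$ with $w_1 + w_2 = 3$; the weights therefore strictly alternate, and any seven consecutive entries form the pattern $1212121$ forbidden by Theorem \ref{thm-wrd-patterns}. For $n \equiv 0 \pmod 4$ the period-$4$ block $(w_1, w_2, w_3, w_4) \in \{1,2\}^4$ has sum $6$, so up to cyclic rotation it is either $1212$ (reproducing the alternating sequence and the forbidden $1212121$ pattern again) or $1122$. Hence we are reduced to the case where the cyclic weight pattern is $11221122\cdots1122$.

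In this surviving case $n \ge 12$, since $n \in \{4,8\}$ are already resolved by the base cases. The cyclic pattern $1122$ contains the substring $21122112$ (take eight consecutive entries starting two positions before a $1$-block), so Theorem \ref{thm-wrd-1122} yields a weak Roman dominating function $f'$ on $J_{n-4}$ with $w(f') = w(f) - 6 = 3(n-4)/2$. Since $n-4$ is even, $\lceil (3(n-4)+1)/2 \rceil = 3(n-4)/2 + 1$, so $w(f') < \gamma_r(J_{n-4})$ by the induction hypothesis, which is the required contradiction. The principal obstacle is the $1122$ case: the other residues dissolve immediately once Lemma \ref{lem-wrd-weight6} is combined with Theorem \ref{thm-wrd-patterns}, whereas the $1122$ case genuinely needs the rigid case-analysis already packaged in Theorem \ref{thm-wrd-1122}, together with the induction down by four copies at a time.
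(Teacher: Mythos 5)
Your proposal is correct and follows essentially the same route as the paper: the same base cases $n=3,\dots,8$, then Lemma \ref{lem-wrd-weight6} to force $w(f)=3n/2$ with all windows of four copies summing to $6$, Theorem \ref{thm-wrd-patterns} to kill the alternating pattern, and Theorem \ref{thm-wrd-1122} plus induction to kill the $1122$ pattern. The only difference is that you make explicit (via the telescoping $w_i = w_{i+4}$ and the residue analysis mod $4$) the classification of admissible weight sequences that the paper dismisses as ``easy to check,'' which is a welcome but not substantively different refinement.
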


\begin{proof}The (coincident) upper bounds for both weak Roman domination and secure domination are provided in Section \ref{sec-upper}. Note that $\gamma_r(J_n) \leq \gamma_s(J_n)$, so a corresponding lower bound for weak Roman domination will also serve as a lower bound for secure domination. We use the formulation for weak Roman domination from \cite{burger} to confirm that $\gamma_r(J_3) = 5$, $\gamma_r(J_4) = 7$, $\gamma_r(J_5) = 8$, $\gamma_r(J_6) = 10$, $\gamma_r(J_7) = 11$, and $\gamma_r(J_8) = 13$, and the formulation from \cite{burdett} to confirm the corresponding results for secure domination. Then, suppose there is a value $k \geq 9$ such that Theorem \ref{thm-wrd} is true for $n = 3, \hdots, k-1$, but is not true for $n = k$. That is, there exists a weak Roman dominating function $f$ for $J_k$ with weight $w(f) < \left\lceil \frac{3k+1}{2} \right\rceil$. This implies that $w(f) \leq \frac{3k}{2}$. Hence, from Lemma \ref{lem-wrd-weight6} we have $w(f) = \frac{3k}{2}$, each set of four consecutive copies has weight 6, and each copy has weight 1 or 2.

It is easy to check that there are only two possibilities. Either $f$ has the repeating pattern $121212 \hdots 12$, or $f$ has the repeating pattern $21122112 \hdots 2112$. Note that it is impossible to combine these two patterns without there being a set of four consecutive copies with weight not equal to 6. Then, since $k \geq 9$, by Theorem \ref{thm-wrd-patterns} the pattern $121212 \hdots 12$ is impossible. Hence, $f$ must have the repeating pattern $21122112 \hdots 2112$. Then, by Theorem \ref{thm-wrd-1122} there is weak Roman dominating function for $J_{k-4}$ with weight equal to $\frac{3k}{2} - 6 < \left\lceil\frac{3(k-4)+1}{2}\right\rceil $. This contradicts our initial assumption, completing the proof.\end{proof}

\bibliographystyle{plain}

\begin{thebibliography}{99}
\bibitem{agustin} Agustin, I.H., Dafik, D. (2014). On the Domination Number of Some Families of Special Graphs. In: {\em Prosiding Seminar Matematika dan Pendidikan Matematika} (Vol. 1).
\bibitem{alanko} Alanko, S., Vercals, S., Isopoussu, A., \"{O}sterg{\aa}rd, P.R., Pettersson, V. (2011) Computing the domination number of grid graphs. {\em Electronic Journal of Combinatorics}, P141.
\bibitem{arumugam} Arumugam, S., Kala, R. (1998). Domination parameters of hypercubes. {\em Journal of the Indian Mathematical Society}, 65(1--4), 31--38.
\bibitem{burcroff} Burcroff, A. (2020). Tightness of Paired and Upper Domination Inequalities for Direct Product Graphs. {\em Arxiv preprint}, https://arxiv.org/abs/2008.03887.
\bibitem{burdett} Burdett, R., Haythorpe, M. (2020). An improved binary programming formulation for the secure domination problem. {\em Annals of Operations Research}, 295, 561--573.
\bibitem{upperdom} Burdett, R., Haythorpe, M., Newcombe, A. (2021). Binary Programming Formulations for the Upper Domination Problem. {\em Annals of Operations Research}, submitted.
\bibitem{burger} Burger, A.P., De Villiers, A.P., Van Vuuren, J.H. (2013). A binary programming approach towards achieving effective graph protection. In: {\em Proc. 2013 ORSSA Annual Conf., ORRSA}, pp. 19--30.
\bibitem{chang} Change, T.Y., Clark, W.E. (1993). The domination numbers of the $5 \times n$ and $6 \times n$ grid graphs. {\em Journal of Graph Theory}, 17, 81--107.
\bibitem{crevals} Crevals, S., \"{O}sterg{\aa}rd, P.R. (2018). On the domination number of 2-dimensional torus graphs. {\em Utilitas Mathematica}, 106, 289--300.
\bibitem{fan} Fan, N., Watson, J.P. (2012) Solving the connected dominating set problem and power dominating set problem by integer programming. In: {\em International conference on combinatorial optimization and applications}, Springer, Berlin, Heidelberg, pp. 371--383.
\bibitem{fricke} Fricke, G.H., Hedetniemi, S.M., Hedetniemi, S.T., McRae, A.A., Wallis, C.K., Jacobson, M.S., Martin, H.W., Weakley, W.D. (1995). Combinatorial problems on chessboards: a brief survey. In: {\em Graph Theory, Combinatorics, and Algorithms: Proc. 7th Quadrennial Internat. Conf. on the Theory and Applications of Graphs (Y. Alavi and A. Schwenk, eds.)} Vol. 1, pp. 507--528.
\bibitem{goncalves} Gon\c{c}alves, D., Pinlou, A., Rao, M., Thomass\'{e}, S. (2011). The domination number of grids. {\em SIAM Journal on Discrete Mathematics}, 25(3), 1443--1453.
\bibitem{gravier} Gravier, S. Total domination number of grid graphs. (2002). {\em Discrete Applied Mathematics}, 121(1--3), 119--128.
\bibitem{hare} Hare, E.O. (1989). {\em Algorithms for Grids and Grid-Like Graphs.} Ph.D. thesis, Department of Computer Science, Clemson University.
\bibitem{securedom} Haythorpe, M., Newcombe, A. (2021). The Secure Domination Number of Cartesian Products of Small Graphs with Paths and Cycles. {\em Discrete Applied Mathematics}, submitted. https://arxiv.org/abs/2103.01406
\bibitem{hedetniemi} Hedetniemi, J.T., Hedetniemi, S.T. (2021). Domination in Chessboards. {\em Structures of Domination in Graphs}, Springer, pp. 341--386.
\bibitem{isaacs} Isaacs, R. (1975). Infinite Families of Nontrivial Trivalent Graphs Which Are Not Tait Colorable. {\em American Mathematics Monthly}, 82, 221--239.
\bibitem{jacobson} Jacobson, M.S., Kinch, L.F. (1984) On the domination number of products of graphs. {\em Ars Combinatoria}, 18, 33--44.
\bibitem{mojdeh2} Jafari Rad, N., Mojdeh, D.A., Musawi, R., Nazari, E. Total domination in cubic Kn\"{o}del graphs. (2021). {\em Comminucations in Combinatorics and Optimization}, 6(2), 221--230.
\bibitem{klobucar} Klobu\v{c}ar, A. (2004). Total domination numbers of cartesian products. {\em Mathematical Communications}, 9(1), 35--44.
\bibitem{kratica} Kratica, J., Mati\'{c}, D., Filipovi\'{c}, V. (2020). Weakly convex and convex domination numbers for generalized Petersen and flower snark graphs. {\em Revista de la Uni\'{o}n Mathm\'{a}tica Argentina}, 61(2), 441--456.
\bibitem{lai} Lai, Y.L., Lin, C.T., Ho, H.M. (2011). Weak roman domination on graphs. In: {\em Proceedings of the 28th workshop on combinatorial mathematics and computation theory, Penghu University of Science and Technology, Penghu, Taiwan, May 27--28, 2011}.
\bibitem{liu} Liu, J., Zhang, X. The exact domination number of generalized Petersen graphs $P(n,k)$ with $n = 2k$ and $n = 2k + 2$. {\em Computational and Applied Mathematics}, 33(2), 497--506.
\bibitem{maksimovic} Maksimovic, Z., Kratica, J., Savic, A., Bogdanovic, M. (2018). Some static roman domination numbers for Flower Snarks. In: {\em XIII Balkan Conference on Operational Research Proceedings}, FON, pp. 9-15.
\bibitem{mojdeh} Mojdeh, D.A., Habibi, M., Badakhshian, L. (2018). Total and connected domination in chemical graphs. {\em Italian Journal of Pure and Applied Mathematics}, 39, 393--401.
\bibitem{pavlic} Pavli\v{c}, P., \v{Z}erovnik, J. (2013). A note on the domination number of the cartesian products of paths and cycles. {\em Kragujevac Journal of Mathematics}, 37(2), 275--285.
\bibitem{pouyandeh} Pouyandeh, S., Golriz, M, Sorouhesh, M., Khademi, M. (2021). On domination number of Latin square graphs of finite cyclic groups. {\em Discrete Mathematics, Algorithms and Applications}, 13(01), 2050090.
\bibitem{pushpam} Pushpam, P.R.L., Suseendran, C. (2019). Secure domination in middle graphs. {\em Bulletin of the International Mathematical Virtual Institute}, 9, 25--38.
\bibitem{pushpam2} Pushpam, P.R.L., Kamalam, M. (2021). Weak Roman domination in Chess graphs. {\em Malaya Journal of Mathematik (MJM)}, 9(1), 486--493.
\bibitem{quadras} Quadras, J., Mahizl, A.S.M., Rajasingh, I., Rajan, R.S. (2015). Domination in certain chemical graphs. {\em Journal of Mathematical Chemistry}, 53(1), 207--219.
\bibitem{sampathkumar} Sampathkumar, E., Walikar, H.B. (1979). The connected domination number of a graph. {\em Journal of Mathematical and Physical Sciences}, 13(6), 607--613.
\bibitem{shan} Shan, L., Li, H., Zhang, Z. (2017). Domination number and minimum dominating sets in pseudofractal scale-free web and Sierpi\'{n}ski graph. {\em Theoretical Computer Science}, 677, 12--30.
\bibitem{shobana} Shobana, A., Palaniammal, S. (2015). Connected Domination Number in Circulant Graphs. {\em International Journal of Mathematical Analysis}, 9(50), 2485--2488.
\bibitem{spalding} Spalding, A. (1998). {\em Min-Plus Algebra and Graph Domination.} Ph.D. thesis, Department of Applied Mathematics, University of Colorado.
\bibitem{valveny} Valveny, M., Rodr\'{\i}guez-Vel\'{a}zques, J.A. (2020). Protection of graphs with emphasis on Cartesian product graphs. {\em FILOMAT}, 33(1), 319--333.
\bibitem{vijayan} Vijayan, A., Nes Mabel, M.F. (2016). Connected dominating sets and connected domination polynomials of square of centipedes. {\em Journal of Information and Optimization Sciences}, 37(2), 155--165.
\bibitem{wallis} Wallis, C.K. (1996). {\em Domination parameters of line graphs of designs and variations of chessboard graphs.} Ph.D. thesis, Clemson University.
\bibitem{xueliang2} Xueliang, F., Xu, X., Yuansheng, Y., Feng, X. (2009). On the domination number of Kn\"{o}del graph $W(3,n)$. {\em International Journal of Pure and Applied Mathematics}, 50(4), 553--558.
\bibitem{xueliang} Xueliang, F., Yang, Y., Jiang, B. (2009). On the domination number of generalized Petersen graphs $P(n,2)$. {\em Discrete Mathematics}, 309(8), 2445--2451.
\bibitem{yaglom} Yaglom, A.M., Yaglom, I.M. (1967). {\em Challenging Mathematics problems with Elementary Solutions Vol I} (Ed: Gordon, B.), Holden-Day, San Francisco.
\bibitem{yan} Yan, H., Kang, L., Xu, G. (2009). The exact domination number of the generalized Petersen graphs. {\em Discrete Mathematics}, 309(8), 2596--2607.
\end{thebibliography}

\end{document}